\documentclass{article}[12pt]
\usepackage{amsmath,amsfonts,amsthm}
\usepackage{amssymb}

\usepackage{epsfig}

\usepackage{subfigure}
\usepackage{graphics}
\usepackage{graphicx}
\usepackage{color}
\usepackage{soul}
\usepackage[makeroom]{cancel}

\usepackage{latexsym}
\usepackage{graphics}
\usepackage{amsmath}
\usepackage{amsthm}
\usepackage{xspace}
\usepackage{amssymb}
\usepackage{color}
\usepackage{cite}
\usepackage{latexsym}
\usepackage{graphics}
\usepackage{amsmath}
\usepackage{amsthm}
\usepackage{xspace}
\usepackage{amssymb}
\usepackage{epsfig}
\usepackage{comment}

\setlength{\parskip}{2 ex}
\setlength{\topmargin}{-0.5 in}
\setlength{\oddsidemargin}{0.0 in}
\setlength{\evensidemargin}{0.0 in}
\setlength{\textheight}{9.0 in}
\setlength{\textwidth}{6.5 in}
\setlength{\parindent}{0in}

\newcommand{\beql}[1]{\begin{equation}\label{#1}}
\newcommand{\eeql}{\end{equation}}
\newcommand{\eqn}[1]{(\ref{#1})}


\newcommand{\R}{\mathbb{R}}
\newcommand{\pr}{\mathbb{P}}
\newcommand{\E}{\mathbb{E}}

\newcommand{\ci}{{\cal I}}

\newcommand{\ck}{{\cal K}}
\newcommand{\cx}{{\cal X}}
\newcommand{\cm}{{\cal M}}

\newcommand{\ch}{{\cal H}}

\newcommand{\ct}{{\cal T}}

\newcommand{\bk}{\boldsymbol{k}}
\newcommand{\bx}{\boldsymbol{x}}
\newcommand{\by}{\boldsymbol{y}}
\newcommand{\bX}{\boldsymbol{X}}

\newcommand{\bbeta}{\boldsymbol{\eta}}
\newcommand{\bnu}{\boldsymbol{\nu}}
\newcommand{\be}{\boldsymbol{e}}
\newcommand{\bu}{\boldsymbol{u}}
\newcommand{\bZero}{\boldsymbol{0}}

\newcommand{\bv}{\boldsymbol{v}}

\newcommand{\Z}{\mathbb{Z}}

\newcommand{\veps}{\varepsilon}
\newcommand{\wt}{\widetilde}

\newtheorem{thm}{Theorem}
\newtheorem{lem}[thm]{Lemma}
\newtheorem{prop}[thm]{Proposition}
\newtheorem{cor}[thm]{Corollary}

\newtheorem{definition}[thm]{Definition}

\begin{document}

\title{A service system with packing constraints: Greedy randomized algorithm achieving sublinear 
 in scale 
optimality gap
}

\author
{
Alexander L. Stolyar \\
University of Illinois Urbana-Champaign\\
1308 W. Main Street, 156 CSL\\
Urbana, IL 61801\\
\texttt{stolyar@illinois.edu}
\and
Yuan Zhong\\
University of Chicago, Booth School of Business\\
5807 S. Woodlawn Ave, HCC 360\\
Chicago, IL 60637\\
\texttt{yuan.zhong@chicagobooth.edu}
}

\date{\today}

\maketitle

\begin{abstract}

A service system with multiple types of arriving customers is considered. 
There is an infinite number of homogeneous servers. 
Multiple customers can be placed for simultaneous service into 
one server, subject to 
general {\em packing constraints}.
The service times of different customers are independent, even if they are served simultaneously
by the same server; the service time distribution depends on the customer type.
Each new arriving customer is placed for service immediately, either
 into an occupied server, i.e., one already serving other customers, 
as long as packing constraints are not violated,
or into an empty server.
After service completion, each customer leaves its server and the system.
The basic objective is to minimize the number of occupied servers in steady state.

We study a {\em Greedy-Random} (GRAND) placement (packing) algorithm, introduced in  \cite{StZh2013}.
This is a simple online algorithm, which places  each arriving customer  uniformly at random into either one of the
already occupied servers that can still fit the customer, 
or one of the so-called {\em zero-servers}, which are empty servers designated to be available to new arrivals.
In \cite{StZh2013}, a version of the algorithm, labeled GRAND($aZ$), was considered,  
where the number of zero servers is $aZ$, with $Z$ being the
current total number of customers in the system, and $a>0$ being an algorithm parameter. 
GRAND($aZ$) was shown in \cite{StZh2013} to be asymptotically optimal in the 
following sense: (a)
the steady-state optimality gap grows 
linearly in the system scale $r$ (the mean total number 
of customers in service), i.e. as $c(a) r$ for some $c(a)> 0$; and (b) $c(a) \to 0$ as $a\to 0$. 

In this paper, we consider the GRAND($Z^p$)
algorithm, in which the number of zero-servers is $Z^p$, where $p \in (1-1/(8\kappa),1)$ is an algorithm parameter,  
and $(\kappa-1)$ is the maximum possible number of customers that a server can fit.
We prove the asymptotic optimality of GRAND($Z^p$) in the sense that the steady-state optimality gap
is $o(r)$, sublinear in the system scale. This is a stronger form of asymptotic optimality than that of GRAND($aZ$).

\end{abstract}

\newpage

\section{Introduction}
\label{sec-intro}

Efficient resource allocation in 
modern cloud computing systems poses many interesting and challenging new problems; see e.g., \cite{Gulati2012} for an overview. 
One such problem is that of efficient real-time assignment (or, packing) of virtual machines to physical machines in a cloud  data center, 
where a primary objective 
is to minimize the number of physical machines being used.
This leads to stochastic dynamic bin packing models, 
where, in contrast to many classical bin packing models,  
``items'' (virtual machines) being placed into ``bins'' (physical machines) do not stay in the system forever,
but leave after a random ``service time.''  For this reason, such models are 
naturally viewed as service (or queueing) systems, with ``items'' and ``bins'' being viewed as customers and servers, respectively.

In this paper, we consider a 
general service system model 
that has also been studied in \cite{St2012,StZh2012,StZh2013}.
There is a finite number of 
customer types, 
and the number of available servers is infinite. 
Customers arrive to the system over time, and 
multiple customers can be placed for simultaneous service (or {\em fit}) into 
the same server, subject to 
{\em packing constraints}. 
We consider {\em monotone} packing, 
a general class of packing constraints, 
where we only impose the following natural and non-restrictive {\em monotonicity} condition: 
if a certain set of customers can fit 
into a server, then a subset can fit as well.
The servers are {\em homogeneous} in that they all have the same packing constraints.
The service times of different customers are independent, even if they are served simultaneously 
by the same server, 
and the service time distribution depends only on the customer type.
Each new arriving customer is placed for service immediately, either
 into an occupied server, i.e., one that is already serving other customers, 
as long as packing constraints are not violated,
or into an empty server.
After service completion, each customer leaves its server and the system -- as we mentioned,
this is what distinguishes 
our model from many classical 
bin packing models (see, e.g., \cite{Csirik2006,Bansal2009}).
As in \cite{StZh2013}, we make Markov assumptions, 
where customers of each type arrive as an independent Poisson process, 
and service time distributions are all exponential.

We are interested in designing
customer placement (packing) algorithms 
that minimize the total number of occupied servers in steady state.
In addition, given the scale at which modern cloud data centers operate, 
it is highly desirable that a placement algorithm is {\em online}, i.e., it makes decisions based
on the current system state only, 
and {\em parsimonious}, 
i.e., it requires only minimal knowledge of system structure and state information.

We study a {\em Greedy-Random} (GRAND) placement algorithm, introduced in  \cite{StZh2013}.
This is a very parsimonious 
online algorithm, which places  each arriving customer  uniformly at random into either one of the
already occupied servers (subject to packing constraints),
or one of the so-called {\em zero-servers}, which are empty servers designated to be available to new arrivals.
In \cite{StZh2013}, a version of the algorithm, 
which we call GRAND($aZ$), 
was considered, where 
the number of zero servers 
depend on the current system state as $aZ$, 
with $Z$ being the current total 
number of customers in the system, and $a>0$ being an algorithm parameter. 
GRAND($aZ$) was shown in \cite{StZh2013} to be 
asymptotically optimal, in the 
sense of the following two properties:
(a) for each $a > 0$, the steady-state optimality gap is of the form $c^r(a) r$, where $r$ is the system {\em scale}, 
defined to be the expectation of $Z$ in steady state, with $c^r(a) \to c(a) > 0$ as $r \to \infty$; 
and (b)  $c(a) \to 0$ as $a\to 0$.
In other words, under GRAND($aZ$) the optimality gap grows linearly as the system scale $r$, 
with the linear factor going to $0$ as the algorithm parameter $a \to 0$.

The focus of this paper is the GRAND($Z^p$)
algorithm, in which the number of zero-servers is $Z^p$, where $p \in \left(1 - \frac{1}{8\kappa}, 1\right)$ is an algorithm
parameter, and $(\kappa-1)$ is the maximum possible number of customers that a server can fit.
Our {\bf main result} is the asymptotic optimality of GRAND($Z^p$), in the sense that the steady-state optimality gap
is $o(r)$, i.e. it is sublinear in the system scale. 
This is a stronger form of asymptotic optimality than that of GRAND($aZ$), because  GRAND($Z^p$) achieves the sublinear gap
simply as $r\to\infty$, 
without having to take an additional limit on 
any algorithm parameter. 
This is in contrast to the case of GRAND($aZ$), which achieves asymptotic optimality 
only by first taking the limit $r \to \infty$, and then the limit $a \to 0$ on the algorithm parameter $a$.

Let us provide some remarks on the reasons why
this stronger form of the asymptotic optimality of GRAND($Z^p$) 
is, on the one hand, natural to
expect, 
and, on the other hand, 
substantially more difficult to rigorously prove. 
Informally speaking, GRAND($Z^p$) can be viewed as GRAND($aZ$), 
where the parameter $a$, 
instead of being fixed, 
is replaced by a variable 
that decreases to zero with increasing system scale; namely, 
$a=Z^p/Z = Z^{p-1}$. However, the asymptotic optimality of GRAND($aZ$) does {\em not}, of course,
imply the (stronger form of) asymptotic optimality of GRAND($Z^p$), 
because the system scale $r\approx Z$, and, 
therefore, the ``parameter'' 
$a=Z^{p-1}\approx r^{p-1}$
depends on and changes with the scale $r$. The key technical difficulty is 
that the analysis of GRAND($Z^p$) 
{\em cannot} be reduced to the analysis of system fluid limits and/or local fluid limits. 
See Section \ref{sec-lyap-choice} for a more detailed discussion.

The implementation of GRAND($Z^p$) is exactly the same as GRAND($aZ$), 
with the only difference being that GRAND($Z^p$) designates $\lceil Z^p \rceil$ 
zero-servers, instead of $\lceil aZ \rceil$, as in the case of GRAND($aZ$).
Thus, like GRAND($aZ$), it is extremely simple and easy to implement. 
It does not need to keep track of the exact states (i.e., ``packing configurations''), of the servers, and the only information required 
at any time is which customer types a given server can still accept for service. 
(For example, for each customer type, a list of servers which can accept 
an additional customer of this type, can be maintained.)
As a result, the algorithm only needs to maintain a very small amount of information.
Furthermore, the algorithm does not use knowledge of 
the customer arrival rates or expected service times. 
We refer the readers to \cite{StZh2013} for a more detailed discussion 
of these attractive implementational features. 

\subsection{Brief Literature Review}

Our work is related to several lines of previous research. 
First, our work is related to the extensive literature on classical bin packing problems, 
where items of various sizes arrive to the system, 
and need to be placed in finite-size bins, according to an online algorithm. 
Once placed, items never leave or move between bins. 
The {\em worst-case analysis} of such problems considers all possible instances of item sizes 
and sequencings, and aims to develop simple algorithms with performance guarantee over all problem instances. 
See e.g., \cite{CCGMV2013} for a recent, extensive survey. 
The {\em stochastic analysis} assumes that item sizes are given according to 
a probability distribution, and the typical objective is to minimize 
the expected number of occupied bins. 
For an overview of results, see e.g., \cite{Csirik2006} and references therein. 
A recent paper \cite{GR2012} establishes 
improved results 
for the classical stochastic setting, and contains
some heuristics and simulations for the case with item departures,
which is a special case of our model.

Much research on classical  
bin packing concerns the one-dimensional case, 
in which both item and bin sizes are scalars. 
It is possible to generalize one-dimensional bin packing to higher dimensions 
in multiple ways. For example, in vector packing problems \cite{CK2004, St2012}, 
item and bin sizes are vectors, and in box packing problems \cite{Bansal2009}, 
items and bins are rectangles or hyper-rectangles. Let us also remark that the packing constraints in our model
include vector packing and box packing as special cases. 
See e.g., \cite{Bansal2009} for an overview of multi-dimensional packing.

Motivated partly by applications to computer storage allocation, 
a {\em one-dimensional, dynamic bin packing} problem was introduced in \cite{CGJ1983}, 
where, similar to the model of this paper, 
items leave the system after a finite service time. 
Paper \cite{CGJ1983} contains a worst-case analysis of the problem, 
so the techniques used are quite distinct from those in our paper. 
A recent review of results on worst-case dynamic bin packing can be found in \cite{CCGMV2013}. 

Another related line of works considers bin packing {\em service} systems, which have one (see e.g. \cite{CS2001, Gamarnik2004}) 
or several servers (see e.g., \cite{Jiang2012, Maguluri2012, Maguluri2013, GSW2012}). 
In these systems, random-size items (or customers) arrive over time, and get placed into 
servers for processing, subject to packing constraints. 
Customers waiting for service are queued, and a typical problem is 
to determine the maximum throughput and/or minimum queueing delay 
under a packing algorithm. 
Our model is similar to these systems, since they model customer departures, 
but is also different, mainly because our system has an infinite number of servers, 
so that there are no queues or problem of stability.
Like our work, 
recent papers on bin packing service systems 
with multiple servers \cite{Jiang2012, Maguluri2012, Maguluri2013, GSW2012}
are also motivated by real-time VM placement problems.

As mentioned in the introduction, 
the model in this paper is the same as that studied in 
\cite{St2012, StZh2012, StZh2013}.
Papers 
\cite{St2012,StZh2012} introduce and study
different classes of {\em Greedy} algorithms,
and prove their asymptotic optimality, 
as the system scale grows to infinity.
A Greedy algorithm does not 
use the knowledge of the customer arrival rates
or mean service times, and makes placement decisions based on the current
system state only. However, unlike GRAND, it does need to keep track of the numbers
of servers 
in different {\em packing configurations}. 
The number of possible configurations can
be prohibitively 
large in many practical scenarios, a feature that may limit the implementability of Greedy.

The relation of our main results to those for GRAND($aZ$) in \cite{StZh2013} has already been discussed in much detail. 
Also related to the GRAND algorithms 
is a recent paper \cite{St2015_grand-het}, 
which generalizes 
GRAND($aZ$) 
and its asymptotic optimality 
to {\em heterogeneous} systems with multiple server types, in which packing constraints depend on the server type;
it also contains results for heterogeneous systems with {\em finite pools} of servers of each type.
Finally, a randomized version of the Best-Fit algorithm was studied in \cite{GZS2014}, which considers 
the model of this paper with specialized packing constraints, and 
was proved to be asymptotically optimal, using techniques similar to those in \cite{StZh2013}.

\subsection{Organization}

The rest of the paper is organized as follows.
In 
Subsection~\ref{subsec-notation}, we introduce basic notation and conventions that will be used throughout the paper.
The model, the GRAND($Z^p$) algorithm and the asymptotic regime are formally defined in Section~\ref{sec-model}.
We  state Theorem~\ref{thm-zp}, the main result on the
asymptotic optimality of GRAND($Z^p$), in Section~\ref{sec:main-thm}. 
In Section~\ref{sec-prelim}, 
we collect some results and observations obtained in \cite{StZh2013},
which are needed for our proofs.
The proof of  Theorem~\ref{thm-zp} is in Section~\ref{sec-grand-zp}, with the proofs of some auxilliary results given in the Appendix.
The paper is concluded in Section~\ref{sec-discuss} with some discussion and suggestions for future work.

\subsection{Basic Notation and Conventions}
\label{subsec-notation}

Sets of real and non-negative real numbers are denoted by $\R$ and $\R_+$, respectively. 
Similarly, sets of integers and non-negative integers are denoted by $\Z$ and $\Z_+$, respectively. 
For $\xi \in \R$, $\lceil \xi \rceil$ denotes the smallest integer
{greater than or equal to} $\xi$, and $\lfloor \xi \rfloor$ denotes the largest integer 
smaller than or equal to $\xi$.
For two vectors $\bx, \by \in \R^n$, we use $\bx \cdot \by$ to 
denote their scalar (dot) product; i.e., $\bx \cdot \by = \sum_i x_i y_i$.
The standard Euclidean norm of a vector $\bx\in \R^n$ is denoted by $\|\bx\| = \sqrt{\bx \cdot \bx}$, 
and the distance from vector $\bx$ to a set $U$ in a Euclidean space is denoted by 
$d(\bx,U)=\inf_{\bu\in U} \|\bx-\bu\|$. 
$\bx \to \bu \in \R^n$ means ordinary convergence in $\R^n$,
and $\bx \to U \subseteq \R^n$
means $d(\bx,U) \to 0$. 
For $\bx \in \R^n$, we also use $\|\bx\|_1$ to denote the $1$-norm of $\bx$, defined to be $\|\bx\|_1 = \sum_i |x_i|$. 
$\be_i$ is the $i$-th coordinate unit vector in $\R^n$. 
Symbol $\implies$
denotes convergence in distribution of random variables taking values in space $\R^n$
equipped with the Borel $\sigma$-algebra. 
Symbol $\stackrel{d}=$ means {\em equal in distribution}. 
The abbreviation {\em w.p.1} means  
{\em with probability 1}. 
The abbreviation RHS (LHS, respectively) means {\em right-hand side} ({\em left-hand side}, respectively). 
In addition, the abbreviation {\em w.r.t} means {\em with respect to}, 
and abbreviation {\em u.o.c.} means 
{\em uniform on compact sets}. 
We often write $x(\cdot)$ to mean the function (or random process) $\{x(t),~t\ge 0\}$,
and we write $\{x_{\bk}\}$ to mean the vector $\{x_{\bk}, ~\bk\in\ck\}$, 
where the set of indices $\ck$ is determined by the context.
For a function (or random process) $x(\cdot)$, 
we use $x(t+)$ to denote its right limit at time $t$, i.e., $x(t+) = \lim_{s \downarrow t} x(s)$, 
and use $x(t-)$ to denote its left limit at time $t$, i.e., $x(t-) = \lim_{s \uparrow t} x(s)$, whenever these limits exist. 
For a differentiable function $f :\R^n \to \R$, its gradient at $\bx \in \R^n$ is denoted by
$\nabla f(\bx) = \left\{\frac{\partial}{\partial x_i} f(\bx), ~i=1,\ldots,n\right\}$.
Indicator function $I\{A\}$ for a condition $A$ is equal to $1$
if $A$ holds and $0$ otherwise. 
The cardinality of a finite set $\mathcal{N}$ is 
$|\mathcal{N}|$. 
Notation $\doteq$ means {\em is defined to be}. In this paper, we use bold letters to represent vectors, 
and plain letters to represent scalars.

\section{System Model}
\label{sec-model}

\subsection{Infinite Server System with Packing Constraints}

The model that we study in this paper is the same as in \cite{St2012, StZh2012, StZh2013}. 
We consider a service system with an infinite number of servers. 
There are $I$ types of arriving customers, indexed by $i \in \{1,2,\ldots,I\} \doteq \ci$. 
For each $i \in \ci$, customers of type $i$ arrive as an independent Poisson process of rate $\Lambda_i >0$, 
and have service times that are exponentially distributed with mean $1/\mu_i$. 
The service times of all customers are mutually independent, and independent of the arrival processes. 
Each customer is placed into one of the servers for processing, immediately upon arrival, 
and departs the system after the service completes. 
Multiple customers can occupy the same server simultaneously, 
subject to the so-called {\em monotone} packing constraint, which we now describe. 
\begin{definition}[Monotone packing]\label{def:monotone}
A packing constraint is characterized by a finite set $\bar\ck$, the set of feasible server {\em configurations}. 
A vector $\bk = \{k_i, ~i\in \ci\} \in \bar\ck$ is a (feasible) server configuration 
if (a) for each $i \in \ci$, $k_i \in \Z_+$; and (b) 
a server can simultaneously process $k_1$ customers of type $1$, $k_2$ customers of type $2$, $\cdots$, 
and $k_I$ customers of type $I$. 
The packing constraint $\bar\ck$ is called {\em monotone}, if the following condition holds: 
whenever $\bk \in \bar\ck$ and $\bk' \leq \bk$ componentwise, then $\bk' \in \bar\ck$ as well. 
\end{definition}
Without loss of generality, we assume that $\be_i \in \bar\ck$ for all $i \in \ci$, where $\be_i$ is the $i$-th coordinate unit vector, 
so that customers of all types can be processed.
By definition, the component-wise zero vector $\bZero$ belongs to $\bar\ck$ -- this is the configuration of an empty server. We denote by $\ck=\bar\ck \setminus \{\bZero\}$ 
the set of configurations that {\em do not} include the zero configuration. 
As discussed in \cite{St2012, StZh2012}, monotone packing includes important special packing constraints such as vector packing. 

An important assumption of the model is that simultaneous service does {\em not} affect the service time distributions 
of individual customers. 
In other words, the service time of a customer is unaffected by whether or not there are other customers served simultaneously by the same server.

We now define the system state. For each $\bk \in \ck$, let $X_{\bk}(t)$ denote 
the number of servers with configuration $\bk$. 
Then, the vector $\bX(t) = \{X_{\bk}(t), \bk \in \ck\}$ is the {\em system state} at time $t$, 
and we often write $\bX = \{X_{\bk}, \bk \in \ck\}$ for a generic state. 
Note that the system state does not include the number of empty servers, which would always be infinite.

The following notation and terminology will be used in the sequel. 
Define $\cm = \{(\bk, i) : \bk \in \ck, \bk - \be_i \in \bar\ck\}$. 
Note that $(\bk, i) \in \cm$ are in one-to-one correspondence with the pairs $(\bk, \bk-\be_i)$ 
with $\bk \in \ck$ and $\bk - \be_i \in \bar\ck$, 
so $(\bk, i)$ can be viewed as a shorthand for $(\bk, \bk-\be_i)$. 
For this reason, we call a pair $(\bk, i)\in \cm$ an {\em edge}.
The placement of a type-$i$ arrival into a server with configuration 
$\bk-\be_i$ to form configuration $\bk$ is called 
{\em an arrival along the edge $(\bk, i)$}, 
and the departure of a type-$i$ customer 
from a server with configuration $\bk$, which changes the server configuration to $\bk-\be_i$, 
is called {\em a departure along the edge $(\bk, i)$}.

\subsection{GRAND and GRAND($Z^p$) Algorithms}
\label{ssec-grand-algo}

In general, a {\em placement (or packing) algorithm} 
determines the servers into which arriving customers are placed dynamically over time. 
In this paper, we are 
interested in {\em online} placement algorithms, 
which make placement decisions based only on the current system state $\bX$. 
Thus, from now on, we will use ``placement algorithms'' to mean online ones. 
Our primary objective is the design of placement algorithms that minimize
the total number of  occupied servers  $\sum_{\bk\in\ck} X_{\bk}$ 
in the stationary regime. 

Under any well-defined placement algorithm, 
the process $\{\bX(t), t\ge 0\}$ is a continuous-time Markov chain 
with a countable state space, which is irreducible and positive recurrent. 
The positive recurrence of $\{\bX(t), t\ge 0\}$ can be argued as follows. 
First, for each $i \in \ci$ and each $t \geq 0$, we let 
\begin{equation}\label{eq:type-i-no}
Y_i(t) = \sum_{\bk \in \ck} k_i X_{\bk}(t)
\end{equation}
be the number of type-$i$ customers in the system at time $t$, 
and let 
\begin{equation}\label{eq:total-no}
Z(t) = \sum_i Y_i(t)
\end{equation}
be the total number of customers in the system at time $t$. 
For each $i \in \ci$, $\{Y_i(t): t\geq 0\}$ describes exactly the dynamics 
of an independent $M/M/\infty$ queueing system with arrival rate $\Lambda_i$ and service rate $\mu_i$, 
regardless of the placement algorithm, and has a unique stationary distribution. 
Let us denote by $Y_i(\infty)$ the random value of $Y_i(t)$ in the stationary regime. 
Then, $Y_i(\infty)$ is a Poisson random variable with mean $\Lambda_i/\mu_i$ for each $i \in \ci$.  
Similarly, the process $\{Z(t): t\geq 0\}$ also has a unique stationary distribution, 
and if we let $Z(\infty)$ be the random value of $Z(t)$ in the stationary regime, then $Z(\infty)$ is 
a Poisson random variable with mean $\sum_{i \in \ci} \Lambda_i/\mu_i$. 
Thus, the positive recurrence of the 
Markov chain $\{\bX(t), t\ge 0\}$ 
follows from the facts that 
$\sum_{\bk \in \ck} X_{\bk}(t) \leq \sum_i Y_i(t) = Z(t)$ 
for all $t \geq 0$, and that the process $\{Z(t): t\geq 0\}$ is positive recurrent. 
Consequently, the process $\{\bX(t), ~t\ge 0\}$ has a unique stationary distribution, 
and we let $\bX(\infty) = \{X_{\bk}(\infty), \bk \in \ck \}$ be the random system state $\bX(t)$ 
in the stationary regime.

In \cite{StZh2013}, we introduced a broad class of placement algorithms, 
called the Greedy-Random (GRAND) algorithms. 
For completeness, we include the definition here. 

\begin{definition}[Greedy-Random (GRAND) algorithm]\label{df:grand}
At any given time $t$, there is a designated finite set of $X_{\bZero}(t)$ empty servers, 
called {\em zero-servers}, where 
$X_{\bZero}(t) = f(\bX(t))$ is a given fixed function 
of the system state $\bX(t)$. 
Suppose that a type-$i$ customer arrives at time $t$. 
Then, the customer is placed into a server 
chosen uniformly at random among the zero-servers and the occupied servers 
where the customer can fit. In other words,
the total number of servers available to a type-$i$ arrival at time $t$ is
$$
X_{(i)}(t) \doteq X_{\bZero}(t) + \sum_{\bk\in \ck:~\bk+\be_i\in \ck} X_{\bk}(t).
$$
If $X_{(i)}(t)=0$, then the customer is placed into an empty server. 
Furthermore, it is important
that immediately after any arrival or departure at time $t$, 
the value of $X_{\bZero}(t+)$ 
is reset (if necessary) to  $f(\bX(t+))$.
\end{definition}

Note that the number $X_{\bZero} = f(\bX)$ of zero-servers is finite at all times, even though
there is always an infinite number of empty servers available.
Also, recall that 
a system state $\bX = \{X_{\bk}, ~\bk\in \ck\}$ only 
includes the quantities of servers in non-zero configurations, so it does not include $X_{\bZero}$.
Clearly, for a given function $f(\cdot)$, GRAND is a well-defined placement algorithm.

The focus of this paper is the following specialization of the general GRAND algorithm. 

\begin{definition}[GRAND($Z^p$) algorithm]\label{df:grand-az}
A special case of the GRAND algorithm, with 
the number of zero-servers depending only on the 
total number of customers as
$$
X_{\bZero}(t) =\lceil  Z(t)^p \rceil  \quad \forall t \geq 0,
$$ 
where $p\in (0, 1)$ is a parameter, is called a GRAND($Z^p$) algorithm.
\end{definition}

In \cite{StZh2013}, we introduced a different specialization of the general GRAND algorithm, 
namely the GRAND($aZ$) algorithm, where the number of zero-servers $X_{\bZero}$ 
depends on $Z$, the total number of customers, as $X_{\bZero} = \lceil aZ \rceil$, $a > 0$. 
GRAND($Z^p$) is similar to GRAND($aZ$) in that under both 
algorithms, 
$X_{\bZero}$ only depends on the system state $\bX$ through $Z$, the total number of customers. 

As we can see, GRAND are fairly ``blind'' algorithms in that
when they place a customer, they do 
not prefer one configuration over another, as long as they can fit this additional customer. 
Similar to GRAND($aZ$), GRAND($Z^p$) can be efficiently implemented. 
At all times, the algorithm only needs to keep track of $I+1$ variables to make placement decisions, 
with the variables being (a) $Z$, the total number of customers; and (b) 
for each $i \in \ci$, $X_{(i)}$, 
the total number of servers that can fit an additional type-$i$ customer, 
including zero-servers and occupied ones. 
In contrast to GRAND($Z^p$) and GRAND($aZ$), 
the Greedy algorithm \cite{St2012} and the Greedy-with-Sublinear-safety-Stocks (GSS) 
algorithm \cite{StZh2012} both need 
to keep track of $|\ck|$ number of variables, 
which is often  
prohibitively large in real applications.

\subsection{Asymptotic Regime}

We are interested in the asymptotic properties 
of the GRAND($Z^p$) algorithms, as the arrival rates become large.
Specifically, assume that a positive {\em scaling parameter} $r$ increases to infinity along a discrete
sequence. 
Customer arrival rates scale linearly with $r$; i.e., 
for each $r$, $\Lambda_i = \lambda_i r$, where $\lambda_i$ are fixed positive parameters. 
Without loss of generality, we assume that $\sum_i \lambda_i/\mu_i = 1$, 
by suitably re-defining $r$ if required. 
For each $r$, let $\bX^r(\cdot)$ be the random process associated with the system parametrized by $r$, 
{let} $\bX^r(\infty)$ be the (random) system state in the stationary regime, 
and let $X_{\bZero}^r(t)$ be the number of zero-servers in the system at time $t$.
For each $i \in \ci$, let $Y^r_i(t) = \sum_{\bk\in\ck} k_i X^r_{\bk}(t)$ be the total number
of type-$i$ customers at time $t$, and let $Y^r_i(\infty) = \sum_{\bk\in\ck} k_i X^r_{\bk}(\infty)$. 
Similarly, let $Z^r(t) = \sum_{i \in \ci} Y^r_i(t)$ 
be the total number of customers at time $t$ and let $Z^r(\infty) = \sum_i Y^r_i(\infty)$. 
As explained in Section \ref{ssec-grand-algo}, 
for each $i \in \ci$, 
$Y^r_i(\infty)$ is an independent Poisson random variable with mean $r \rho_i$, where $\rho_i\equiv \lambda_i/\mu_i$, 
and $Z^r(\infty)$ is a Poisson random variable with mean $r \sum_i \rho_i = r$.

The {\em fluid-scaled} processes are defined to be $\{\bx^r(t) : t\geq 0\}$ for each $r$, 
where $\bx^r(t) = \bX^r(t)/r$. 
We also define $\bx^r(\infty) = \bX^r(\infty)/r$.
For any $r$, $\bx^r(t)$ takes values in the  non-negative orthant $\R_+^{|\ck|}$.
Similarly, $y^r_i(t)\doteq Y^r_i(t)/r$, $z^r(t) \doteq Z^r(t)/r$, $x^r_{\bZero}(t) \doteq X^r_{\bZero}(t)/r$ and 
$x^r_{(i)}(t) \doteq X^r_{(i)}(t)/r$,  for  $t \in [0, \infty]$.
Since $\sum_{\bk\in \ck}  x_{\bk}^r(\infty) \leq \sum_i y^r_i(\infty) \leq z^r(\infty)=Z^r(\infty)/r$,
the 
family of random variables
$\sum_{\bk\in \ck} x_{\bk}^r(\infty)$ is uniformly integrable in $r$.
This in particular implies that the sequence of distributions of $\bx^r(\infty)$ is tight,
so there always exists a limit {$\bx(\infty)$} in distribution{, and} $\bx^r(\infty)\implies \bx(\infty)$,
along a subsequence of $r$. 

Since $Y^r_i(\infty)$ is a Poisson random variable of mean $\rho_i r$, 
any weak limit point $y_i(\infty)$ of $\left\{y^r_i(\infty)\right\}_r$ 
must concentrate at the constant $\rho_i$.
Thus, the limit (random) vector $\bx(\infty)$ satisfies the following conservation laws:
\beql{eq-cons-laws}
\sum_{\bk\in\ck} k_i x_{\bk}(\infty) \equiv y_i(\infty) = \rho_i, ~~\forall i,
\end{equation}
{which,} in particular, {implies that}
\beql{eq-cons-laws2}
z(\infty)\equiv \sum_i y_i(\infty) \equiv \sum_i \rho_i
= 1.
\end{equation}
Therefore, the values of $\bx(\infty)$ are confined to the convex compact 
$(|\ck|-I)$-dimensional 
polyhedron
\begin{equation}\label{eq-feasible-set}
\cx \doteq \left\{\bx\in \R_+^{|\ck|} ~\Big|~  \sum_{\bk} k_i x_{\bk} = \rho_i, ~\forall i\in\ci \right\}.
\end{equation}
We will slightly abuse notation by using symbol $\bx$ for a generic element of $\R_+^{|\ck|}$;
while $\bx^r(\infty), \bx(\infty),\bx^r(t)$, and later $\bx(t)$, refer to random  vectors taking values in $\R_+^{|\ck|}$.

Note that the asymptotic regime and the associated basic properties \eqn{eq-cons-laws}
and \eqn{eq-cons-laws2} hold {\em for any  {placement algorithm}}. 
 Indeed, \eqn{eq-cons-laws}
and \eqn{eq-cons-laws2} only depend on the fact 
that $Y_i^r(\infty)$ are mutually independent
Poisson random variables
with means $\rho_i r$, discussed earlier.

Consider the following linear program (LP) of minimizing $\sum_{\bk\in\ck} x_{\bk}$, 
the number of occupied servers on the fluid scale.
\begin{flalign}
 \text{(LP)} & &  \text{Minimize } & \sum_{\bk\in\ck} x_{\bk} & \label{eq-opt} \\
&&\mbox{subject to } & \sum_{\bk\in\ck} k_i x_{\bk} = \rho_i ~~\forall i \in \ci, &\label{eq-cons-laws222} \\
&& & \quad \quad \quad x_{\bk} \geq 0, ~~\forall \bk \in \ck. \label{eq-non-neg}
\end{flalign} 
Denote by $\cx^* \subseteq \cx$ the set of its optimal solutions, 
and by $L^*$ its optimal value.
Since constraints \eqref{eq-cons-laws222} hold for $\bx(\infty)$ 
under any placement algorithm, 
the optimal value $L^*$ provides a lower bound 
on $\sum_{\bk} x_{\bk}(\infty)$, 
the steady-state total number of occupied servers on the fluid scale, 
under any placement algorithm.

\section{Main Result}
\label{sec:main-thm}

The {\bf main result} of this paper is the following theorem. It shows that GRAND($Z^p$)
is asymptotically optimal, when parameter $p$ is sufficiently close to $1$, 
which depends on the structure of the packing constraint specified by $\ck$.

\begin{thm}
\label{thm-zp}
Denote $\kappa \doteq 1+\max_{\bk} \sum_i k_i$, and assume that 
$p<1$ and 
\beql{eq-cond-p}
1-\kappa (1-p) > 7/8, ~~~\mbox{or equivalently,}~~~ p > 1-1/(8\kappa).
\end{equation}
Consider a sequence of systems, with parameter $r\to\infty$, operating 
under the GRAND($Z^p$) algorithm.
For each $r$, let $\bx^r(\infty)$ denote the random state 
of the fluid-scaled process in the stationary regime. 
Then as $r \rightarrow \infty$, 
\[
d(\bx^r(\infty), \cx^*) \Rightarrow 0.
\]
\end{thm}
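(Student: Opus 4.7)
The plan is to avoid fluid-limit arguments entirely (since, as the authors emphasize, the scale-dependent ``effective parameter'' $a=Z^{p-1}$ obstructs that route) and instead attack the problem via a Lyapunov drift analysis of the pre-limit Markov chain $\bX^r(\cdot)$, with quantitative control on the stationary expectation of the Lyapunov function. A natural first move is to import, from the GRAND($aZ$) analysis of \cite{StZh2013}, the notion of an ``equilibrium point'' $\bar\bx(a)\in\cx$ associated to parameter $a>0$ -- a point around which the GRAND($aZ$) fluid dynamics relax -- together with the quantitative fact that $\bar\bx(a)\to\cx^*$ as $a\downarrow 0$. For GRAND($Z^p$) at scale $r$, the effective parameter is $a\approx r^{p-1}$, so the reference point $\bar\bx^r:=\bar\bx(r^{p-1})$ should serve as a proxy ``equilibrium,'' with $\bar\bx^r\to\cx^*$.

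Second, I would choose a Lyapunov function $G:\cx\to\R_+$ that measures the discrepancy from $\bar\bx^r$, most likely a relative-entropy-type functional such as $G(\bx)=\sum_\bk x_\bk\log(x_\bk/\bar x^r_\bk)$ or a quadratic surrogate, since such functionals interact cleanly with the multinomial-style jump rates of GRAND. The aim is to establish, for the generator $\cal{L}^r$ of $\bX^r(\cdot)$ applied to the fluid-scaled function, a drift inequality roughly of the form $\cal{L}^r G(\bx^r)\le -\alpha(r)\,G(\bx^r)+\delta(r)$, with positive $\alpha(r),\delta(r)$ explicitly controlled in terms of $r^{p-1}$ (the restoring strength), $r^p$ (the number of zero-servers), and the Poissonian noise scales. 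Taking stationary expectations then yields $\E\,G(\bx^r(\infty))\le\delta(r)/\alpha(r)$, which, combined with $\bar\bx^r\to\cx^*$, the already-noted tightness of $\{\bx^r(\infty)\}$, and the triangle inequality, gives $d(\bx^r(\infty),\cx^*)\Rightarrow 0$.

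The delicate points, and the origin of the technical threshold $p>1-1/(8\kappa)$, lie in the drift bookkeeping. First, $\bar\bx^r$ is not a genuine invariant point: the Poisson fluctuations of $Z^r(\infty)$ around $r$ (order $r^{1/2}$) perturb the ``instantaneous'' equilibrium, producing an irreducible noise floor in $\delta(r)$. Second, the quadratic/higher-order terms in the generator expansion of $G$ scale with the maximum jump size, which is governed by $\kappa$; these contribute an error whose order is a power of $r$ with exponent growing linearly in $\kappa(1-p)$. To have such errors dominated by the restoring drift, whose strength is itself governed by how small $r^{p-1}$ is, one needs $\kappa(1-p)$ to sit below an explicit absolute constant; working this out carefully yields the $7/8$ threshold.

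The hard part, in my view, will not be the Foster--Lyapunov mechanics but the construction of $\bar\bx^r$ and $G$ with enough regularity (smooth dependence on $r$, correct boundary behavior where some coordinates vanish, Lipschitz control of $\nabla G$) that the restoring part and the error part of $\cal{L}^r G$ can be compared sharply and at the right power of $r$. I would expect the proof to spend most of its effort on these a priori estimates -- likely leaning heavily on the preliminary results of Section~\ref{sec-prelim} carried over from \cite{StZh2013} -- and to close with a relatively short stationary-expectation/concentration argument applied directly to $\bx^r(\infty)$, in lieu of any fluid-trajectory convergence.
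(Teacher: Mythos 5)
Your high-level framing --- work directly with the pre-limit Markov chain, use a Lyapunov function tied to the effective parameter $a=r^{p-1}$, and make the threshold on $p$ emerge from a $\kappa$-dependent error analysis --- is in the right neighborhood, and your guess that the Lyapunov function should be relative-entropy-like and inherited from the GRAND($aZ$) analysis is correct: the paper uses $L^{(a)}$ of \eqn{eq-L-def} with $a = r^{p-1}$, and the product-form point $\bx^{*,a}$ plays the role of your $\bar\bx(a)$. But the mechanism you propose --- a geometric Foster--Lyapunov inequality (generator of $G$ bounded by $-\alpha(r)G + \delta(r)$), closed by taking stationary expectations --- is genuinely different from what the paper does, and I do not believe it would close as written.

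There are two obstructions. First, such a drift inequality would have to hold on the whole state space, but $L^{(a)}$ has unbounded logarithmic gradient as any coordinate $x_{\bk}\to 0$, and the generator applied to $L^{(a)}$ is uncontrolled near the boundary. The drift expression $\Xi(\bx)$ that the paper works with only has the right sign and magnitude on the set $\widehat\cx^r$ where all $x_{\bk}\ge cr^{s-1}$ (conditions \eqn{eq-cond1}--\eqn{eq-cond2}), and showing that the stationary process stays in $\widehat\cx^r$ with overwhelming probability is itself a non-trivial multi-scale induction (Lemma~\ref{lem-p2}). Once you are forced to localize the drift inequality to $\widehat\cx^r$, the ``take stationary expectations'' step no longer delivers anything directly; the paper instead introduces an explicit \emph{artificial process} coupled to the true one --- a pathwise localization --- and reasons on trajectories. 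Second, the restoring drift is \emph{not} proportional to the Lyapunov gap $L^{(a)}(\bx)-L^*$: the paper establishes $\Xi(\bx)\le 0$ always, and $\Xi(\bx)\lesssim -r^{2s-2}/\log r$ when the state is $\delta$-far from product form --- a tiny fixed negative amount, not a contraction. A moment bound of the form $\E\,G(\bx^r(\infty))\le\delta(r)/\alpha(r)$ would therefore be vacuous or uninformative. The paper's route is to decompose the Lyapunov increment into a martingale plus a compensator (via the martingale lemmas of \cite{PTW2007}), control the martingale by Doob's inequality over windows of length $r^{s-1}$, tile $[0,r^\alpha]$ with such windows, and apply Borel--Cantelli along a subsequence $r=e^n$ to conclude w.p.\,1 that $L^{(a)}(\bx^r(\infty))-L^*<2\gamma$ for all large $r$ --- a pathwise argument, not a stationary-expectation bound. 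Finally, your attribution of the $7/8$ threshold to higher-order generator terms scaling with jump size is only loosely right: the constraint enters as $s>7/8$ for $s<1-\kappa(1-p)$, with $\kappa$ appearing through the exponents $s(\bk)=1-(1+\sum_i k_i)(1-p)$ in the lower bounds $x_{\bk}\gtrsim r^{s(\bk)-1}$, and the particular constant $7/8$ is dictated by balancing the Doob estimate, the generator-versus-$\Xi$ approximation error, and the number of tiling windows.
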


We would like to compare our main result, Theorem \ref{thm-zp}, 
with the main results (Theorems 3 and 4) of \cite{StZh2013}, 
on the asymptotic performance of the GRAND($aZ$) algorithm. 
For any given $p$ that satisfies \eqref{eq-cond-p}, 
GRAND($Z^p$) is asymptotically optimal 
in the sense that the {\em fluid-scaled optimality gap}, 
as measured by $d(\bx^r(\infty), \cx^*)$, goes to $0$ 
as the system scale $r$ grows to infinity. 
In contrast, GRAND($aZ$) is asymptotically optimal in the following (essentially, weaker)
sense: for any fixed parameter $a>0$ and under GRAND($aZ$), as $r \to \infty$,
the fluid-scaled optimality gap $d(\bx^r(\infty), \cx^*)$ converges to a constant $c(a)$,
which is not necessarily (and typically is not) zero; however, $c(a) \to 0$ as $a \to 0$. 
So, speaking informally, GRAND($Z^p$) optimality requires only that $r\to\infty$, while the optimality of
GRAND($aZ$) requires that $r\to\infty$ and then $a\to 0$.

Let us also remark on the role of  condition
\eqn{eq-cond-p} on the parameter $p \in (0, 1)$ in Theorem~\ref{thm-zp}.
This condition is technical, needed for our technical  approach to work.
It requires $p$ to be {\em sufficiently close} to $1$. Roughly speaking, 
when the parameter $p$ is sufficiently close to $1$, this guarantees that 
$x_{\bZero}^r$ is sufficiently ``large,'' approximately $r^{p-1}$. 
This induces the steady-state values of 
$x_{\bk}^r$ to be also ``large'', at least $r^{7/8 -1}$, for {\em all} configurations $\bk$
(see Lemma \ref{lem-p2}).
This in turn allows us to obtain desired bounds on the drift and increments of 
an appropriate Lyapunov function under GRAND($Z^p$) 
(see Sections \ref{sec-lyap-drift} and \ref{sec-complete-thm-zp} for details).

\section{Preliminaries} 
\label{sec-prelim}

In this section, we present some definitions and facts from \cite{StZh2013}, 
to provide enough background  
needed for the proof of Theorem \ref{thm-zp}, our main result. 
 The main purpose of this section is to introduce 
the Lyapunov function $L^{(a)}$ defined in \eqn{eq-L-def}, parameterized by $a>0$, 
together with its ``drift'' term $\Xi$, defined in \eqn{eq-L-deriv}, 
when the system is operating under the corresponding GRAND algorithm. 
In \cite{StZh2013}, the term $\Xi$ was shown to be the time derivative of $L^{(a)}$ 
in the fluid limit, under the GRAND($aZ$) algorithm. For the GRAND($Z^p$) algorithm that
we study in this paper, we will use the Lyapunov function $L^{(a)}$ with $a$ depending on $r$, namely
$a = r^{p-1}$, to prove Theorem \ref{thm-zp};
in this case the term $\Xi$ approximates the drift of $L^{(a)}$, and for any large fixed $r$ the random increment of 
$L^{(a)}$ can be shown to be well approximated by a time integral of $\Xi$, with high probability.
Along the way, 
we also introduce some useful properties of the LP \eqn{eq-opt}--\eqn{eq-non-neg}, $L^{(a)}$ and $\Xi$.

\paragraph{Dual characterization of (LP).}
Using the monotonicity of $\bar{\ck}$ (cf. Definition \ref{def:monotone}), it is easy to check that if, in the program (LP) defined by  
\eqn{eq-opt}-\eqn{eq-non-neg}, we replace equality constraints
\eqn{eq-cons-laws222} with the inequality constraints
\beql{eq-cons-laws222ge}
\sum_{\bk\in\ck} k_i x_{\bk} \ge \rho_i, ~~\forall i,
\end{equation}
we form a new linear program (LP') with the same optimal value as (LP). 
More explicitly, (LP') is given by 
\begin{flalign*}
 \text{(LP')} & &  \text{Minimize } & \sum_{\bk\in\ck} x_{\bk} & \\
&&\mbox{subject to } & \sum_{\bk\in\ck} k_i x_{\bk} \geq \rho_i ~~\forall i \in \ci, \\
&& & \quad \quad \quad x_{\bk} \geq 0 ~~\forall \bk \in \ck. 
\end{flalign*} 
Let $\cx^{**}$ denote the set of optimal solutions of (LP'). Then, 
$\cx^{**}$ contains $\cx^*$, the set of optimal solutions of (LP); 
or more precisely, $\cx^* = \cx^{**} \cap \cx$.
The dual program (DUAL') of (LP') is given by 
\begin{flalign}
 \text{(DUAL')} & &  \text{Maximize } & \sum_{i \in \ci} \rho_i \eta_i & \\
&&\mbox{subject to } & \sum_{i \in \ci} k_i \eta_i \leq 1, ~~\forall \bk \in \ck, \label{eq-dual-1}\\
&& & \quad \quad \quad \eta_i \geq 0, ~~\forall i \in \ci. \label{eq-dual-111}
\end{flalign} 
The following lemma is a simple consequence of 
the Kuhn-Tucker theorem.
\begin{lem}\label{lem-dual'} 
Vector $\bx$ is an optimal solution of (LP), i.e., $\bx \in \cx^*$, 
if and only if $\bx\in \cx$,
and there exists a vector $\bbeta=\{\eta_i, ~i\in \ci\}$ that satisfies \eqref{eq-dual-1}, \eqref{eq-dual-111} 
and the complementary slackness condition: 
\beql{eq-dual-3}
\mbox{for any $\bk \in \ck$, if}
~\sum_i k_i \eta_i < 1,~\mbox{then}~x_{\bk} = 0.
\end{equation}
(Clearly, any such vector $\bbeta$ is an optimal solution of (DUAL').)
\end{lem}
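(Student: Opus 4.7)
The plan is to reduce the claim to standard strong LP duality applied to the pair (LP')--(DUAL'), using the identification $\cx^* = \cx^{**} \cap \cx$ already noted in the excerpt. Since $\bx \in \cx$ is assumed on both sides of the equivalence, the statement is really a characterization of when $\bx \in \cx$ belongs to $\cx^{**}$, expressed through the KKT conditions of (LP').

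For the ``only if'' direction, suppose $\bx \in \cx^*$. Then $\bx \in \cx^{**}$, i.e., $\bx$ is optimal for (LP'). By strong duality for (LP')--(DUAL'), there exists a dual optimal $\bbeta=\{\eta_i\}$ satisfying \eqref{eq-dual-1} and \eqref{eq-dual-111}. The primal-variable complementary slackness for the pair (LP')--(DUAL') reads: for each $\bk \in \ck$, if the dual constraint $\sum_i k_i \eta_i \le 1$ is strict, then $x_{\bk} = 0$. This is exactly \eqref{eq-dual-3}.

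For the ``if'' direction, suppose $\bx \in \cx$ and $\bbeta$ satisfies \eqref{eq-dual-1}, \eqref{eq-dual-111}, \eqref{eq-dual-3}. I would directly verify that the primal and dual objective values coincide at $(\bx,\bbeta)$. By \eqref{eq-dual-3}, whenever $x_{\bk} > 0$ we have $\sum_i k_i \eta_i = 1$, so
\[
\sum_{\bk \in \ck} x_{\bk} \;=\; \sum_{\bk \in \ck} x_{\bk} \sum_i k_i \eta_i \;=\; \sum_i \eta_i \sum_{\bk \in \ck} k_i x_{\bk} \;=\; \sum_i \rho_i \eta_i,
\]
where the last equality uses $\bx \in \cx$, i.e., $\sum_{\bk} k_i x_{\bk} = \rho_i$. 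Since $\bx$ is feasible for (LP') (the equalities imply the inequalities) and $\bbeta$ is feasible for (DUAL'), and their objective values coincide, weak duality forces both to be optimal. Thus $\bx \in \cx^{**} \cap \cx = \cx^*$.

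There is no genuine obstacle here; the only subtlety worth flagging is that the complementary slackness condition \eqref{eq-dual-3} pertains to the primal variables $x_{\bk}$ of (LP'), and one must invoke (LP') rather than (LP) so that the dual (DUAL') has the sign constraints $\eta_i \ge 0$ needed for \eqref{eq-dual-111}. The corresponding slackness on the dual side of (LP')--(DUAL') is automatic because every $\bx \in \cx$ satisfies the inequality constraints \eqref{eq-cons-laws222ge} with equality, so no additional condition on $\bbeta$ is needed beyond \eqref{eq-dual-1}--\eqref{eq-dual-111} and \eqref{eq-dual-3}.
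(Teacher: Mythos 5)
Your proposal is correct and takes essentially the same route the paper intends (the paper simply asserts the lemma is a "simple consequence of the Kuhn-Tucker theorem"), namely strong duality and complementary slackness for the pair (LP')--(DUAL') together with the identification $\cx^*=\cx^{**}\cap\cx$. The direct verification in the "if" direction that $\sum_{\bk}x_{\bk}=\sum_i\rho_i\eta_i$ also cleanly justifies the parenthetical remark that any such $\bbeta$ is optimal for (DUAL').
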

Let $\ch^*$ be the set of vectors $\bbeta$ that satisfy \eqn{eq-dual-1}-\eqn{eq-dual-3} for some $\bx \in \cx$. 

\paragraph{A useful Lyapunov function.} 
For any parameter $a \in (0, 1)$, define the function $L^{(a)}: \R_+^{|\ck|} \to \R$ 
by 
\beql{eq-L-def}
L^{(a)}(\bx) \doteq -\frac{1}{\log a}\sum_{\bk\in\ck} x_{\bk} \log \left(\frac{x_{\bk} c_{\bk}}{e a}\right),
\end{equation}
where $c_{\bk} \doteq \prod_i k_i !$, $0!=1$, and we use the convention that $0\log 0 = 0$. 

Function $L^{(a)}(\bx)$ 
can be viewed as as an approximation 
to the linear function $\sum_{\bk \in \ck} x_{\bk}$. 
Indeed, it has been shown \cite{StZh2013} 
that as $a \to 0$, $L^{(a)}(\bx) \to \sum_{\bk \in \ck} x_{\bk}$ 
uniformly over any compact set. 

Function $L^{(a)}(\bx)$  was introduced in \cite{StZh2013}  in the analysis 
of the GRAND($aZ$) algorithm. Its unique optimal point $\bx^{*,a}$ has a product form (see Lemma \ref{lem-product-form-cvx} below),
and $L^{(a)}(\bx)$ served as a Lyapunov function to show that in the fluid limit, $\bx^{*,a}$ is the unique equilibrium point under GRAND($aZ$). 
The rest of this section is devoted to stating properties of -- or related to -- 
function $L^{(a)}(\bx)$. For more details on the intuition for the Lyapunov function $L^{(a)}(\cdot)$, see Remark 1 of \cite{StZh2013}.
Note that in the analysis of GRAND($Z^p$) in this paper, $L^{(a)}(\bx)$ will also serve as a Lyapunov function, but with the parameter
 $a$ now depending on $r$ as $a = r^{p-1}$ -- this is the source of one of the key challenges we face in this paper.

Throughout the paper, we will use notation $b= -\log a$. 
Then, for each $\bk \in \ck$, we have
\beql{eq-L-partial}
\frac{\partial}{\partial x_{\bk}} L^{(a)}(\bx) = \frac{1}{b}  \log \left(\frac{c_{\bk}x_{\bk}}{a}\right).
\end{equation}
Note that if we adopt the convention that
\beql{eq-formal-deriv-zero}
\frac{\partial}{\partial x_{\bZero}} L^{(a)}(\bx)\Big|_{x_{\bZero}=a} = 0,
\end{equation}
then \eqn{eq-L-partial} is valid for $\bk=\bZero$ and $x_{\bZero}=a$, which will be useful later.

Function $L^{(a)}$ 
is strictly convex in $\bx\in\R_+^{|\ck|}$.
Consider the problem $\min_{\bx\in \cx} L^{(a)}(\bx)$. It is the
 following convex optimization problem (CVX($a$)):
\begin{flalign}
 \text{(CVX($a$))} & &  \text{Minimize } & L^{(a)}(\bx) & \label{eq-opt-grand} \\
&&\mbox{subject to } & \sum_{\bk\in\ck} k_i x_{\bk} = \rho_i, ~~\forall i \in \ci, &\label{eq-cons-laws-grand} \\
&& & \quad \quad \quad x_{\bk} \geq 0, ~~\forall \bk \in \ck. \label{eq-non-neg-grand}
\end{flalign}
Denote by $\bx^{*,a} \in \cx$ its unique optimal solution. 
The following lemma 
provides a crisp characterization of the point $\bx^{*, a}$. 
\begin{lem}\label{lem-product-form-cvx}
A point $\bx\in \cx$ is the optimal solution to (CVX($a$)) defined by \eqn{eq-opt-grand}-\eqn{eq-non-neg-grand}, 
i.e., $\bx=\bx^{*,a}$, if and only if it has a product form representation
\beql{eq-grand-product}
x^{*,a}_{\bk} = \frac{a}{c_{\bk}} \exp \left[b \sum_i k_i \nu_i^{*,a}\right] = \frac{1}{c_{\bk}} a^{1-\sum_i k_i \nu_i^{*,a}} , ~~ \forall \bk \in \ck,
\end{equation}
for some vector $\bnu^{*,a} = \{\nu_i^{*,a}, ~ i\in \ci\}$.
\end{lem}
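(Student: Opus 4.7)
My plan is a direct Lagrangian / KKT argument that exploits the strict convexity of $L^{(a)}$. Since $L^{(a)}$ is strictly convex on $\R_+^{|\ck|}$ and the feasible set $\cx$ is a non-empty compact convex polytope, (CVX($a$)) has a unique minimizer $\bx^{*,a}$, so characterizing stationarity will suffice.

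First I would attach multipliers $\nu_i \in \R$ to the equality constraints \eqn{eq-cons-laws-grand} and $\mu_{\bk} \geq 0$ to the non-negativity constraints \eqn{eq-non-neg-grand}, and invoke the KKT theorem (which applies since the equality constraints are linear, giving a trivial constraint qualification). Using the gradient formula \eqn{eq-L-partial}, the stationarity condition reads
$$\frac{1}{b}\log\frac{c_{\bk}\, x_{\bk}^{*,a}}{a} \;=\; \sum_i k_i \nu_i^{*,a} + \mu_{\bk}, \qquad \bk \in \ck,$$
paired with complementary slackness $\mu_{\bk} x_{\bk}^{*,a} = 0$. The left-hand side tends to $-\infty$ as $x_{\bk}^{*,a} \to 0^+$, while the right-hand side is finite, so every $x_{\bk}^{*,a} > 0$, and hence every $\mu_{\bk} = 0$ by complementary slackness. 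Solving what remains for $x_{\bk}^{*,a}$ produces exactly the product form \eqn{eq-grand-product}, with the vector of equality multipliers playing the role of $\bnu^{*,a}$.

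The main obstacle, to the extent there is one, is justifying this interior-point step: confirming that the minimizer really does lie in the relative interior of $\cx$ (i.e., all coordinates are strictly positive). This rests on the entropic singularity of $x\log x$ at the origin together with non-emptiness of that relative interior, which I would verify by starting from the basic feasible solution $x_{\be_i} = \rho_i$ (feasible because $\be_i \in \ck$ for every $i$) and, for each $\bk \in \ck$, transferring a tiny $\epsilon$ from $\{x_{\be_i} : k_i > 0\}$ into $x_{\bk}$ in a way that preserves every conservation constraint, yielding a strictly positive feasible point. For the converse direction: if $\bx \in \cx$ satisfies \eqn{eq-grand-product} for some $\bnu^{*,a}$, then $\bx$ together with multipliers $\nu_i^{*,a}$ and $\mu_{\bk} = 0$ satisfies the full KKT system; strict convexity makes the KKT solution unique, so $\bx = \bx^{*,a}$.
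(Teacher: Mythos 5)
Your proposal is correct and takes essentially the same Lagrangian/KKT route as the paper's proof sketch; the one genuine addition is that you make explicit the interior-point step — exhibiting a strictly positive feasible point and using the $-\infty$ boundary slope of $x\log x$ to force all $x^{*,a}_{\bk}>0$ and hence $\mu_{\bk}=0$ — which the paper's sketch silently assumes by dropping the non-negativity multipliers from the Lagrangian. That addition is exactly the right way to justify the sketch, and the rest (stationarity gives the product form, sufficiency plus strict convexity gives the converse) matches the paper.
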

\begin{proof} 
For each $i \in \ci$, let $\nu_i$ be the dual variable that corresponds to the equality constraint $\sum_{\bk\in \ck} k_ix_{\bk} = \rho_i$, 
and consider the Lagrangian
$$
L^{(a)}(\bx) + \sum_i \nu_i \left(\rho_i - \sum_{\bk\in\ck} k_i x_{\bk}\right).
$$
By setting partial derivatives of the Lagrangian 
to zero, we get 
\[
\frac{1}{b}  \log \left[\frac{x_{\bk} c_{\bk}}{a} \right] - \sum_i \nu_i k_i = 0,  ~~\forall \bk \in \ck.
\]
$\bx^{*,a} \in \cx$ is the optimal solution to (CVX($a$)) if and only if there exist Lagrange multipliers 
$\bnu^{*,a} = \{\nu_i^{*,a}, ~ i\in \ci\}$ 
such that 
\[
\frac{1}{b}  \log \left[\frac{x_{\bk}^{*,a} c_{\bk}}{a} \right] - \sum_i \nu_i^{*,a} k_i = 0,  ~~\forall \bk \in \ck, 
\]
which leads to the product form representation \eqn{eq-grand-product}.
\end{proof}

A simple consequence of Lemma \ref{lem-product-form-cvx} is that the Lagrange multipliers $\nu_i^{*,a}$ are unique 
and are equal to $1 - \log \left(x_{\be_i}^{*,a}\right)/\log a$, by considering \eqref{eq-grand-product} for $\be_i$, $i \in \ci$.
The following result is from \cite{StZh2013}.
\begin{thm}
\label{th-grand-fluid-convergence}
Let $\bx^{*,a}$ and $\bnu^{*,a}$ be as in Lemma \ref{lem-product-form-cvx}.
Then, as $a\downarrow 0$, $\bx^{*,a} \to \cx^*$ and $\bnu^{*,a} \to \ch^*$.
\end{thm}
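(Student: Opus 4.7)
I would prove the two convergences in sequence: first $\bx^{*,a}\to\cx^*$ via uniform convergence of $L^{(a)}$ to the linear objective $f(\bx)\doteq\sum_{\bk}x_{\bk}$ on compact sets plus compactness of $\cx$. Any subsequential limit $\bar\bx$ of $\{\bx^{*,a}\}$ lies in $\cx$, and for any $\bx^*\in\cx^*$ the optimality inequality $L^{(a)}(\bx^{*,a})\le L^{(a)}(\bx^*)$ yields $f(\bar\bx)\le f(\bx^*)=L^*$ in the limit, forcing $\bar\bx\in\cx^*$.

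Next I would show that $\{\bnu^{*,a}\}$ is bounded as $a\downarrow 0$. An upper bound follows from the product form \eqref{eq-grand-product} and uniform boundedness of $x_{\bk}^{*,a}$ on $\cx$: for every $\bk\in\ck$ one has $\sum_i k_i\nu_i^{*,a}\le 1+o(1)$, so $\nu_i^{*,a}\le 1+o(1)$ by taking $\bk=\be_i$. For the matching lower bound I would argue by contradiction. Suppose $\|\bnu^{*,a}\|\to\infty$ along a subsequence; renormalize to $\hat\bnu^{*,a}\doteq\bnu^{*,a}/\|\bnu^{*,a}\|$ and extract a unit-norm limit $\hat\bnu$, which must satisfy $\sum_i k_i\hat\eta_i\le 0$ for every $\bk\in\ck$. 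Conservation $\sum_{\bk}k_i x_{\bk}^{*,a}=\rho_i>0$ together with the finiteness of $\ck$ produces (by pigeonhole, along a further subsequence) for each $i$ some $\bk^{(i)}\in\ck$ with $k_i^{(i)}\ge 1$ along which $x_{\bk^{(i)}}^{*,a}$ stays bounded below, whence the product form forces $\sum_j k_j^{(i)}\nu_j^{*,a}=1+o(1)$; dividing by $\|\bnu^{*,a}\|$ gives $\sum_j k_j^{(i)}\hat\eta_j=0$. Combined with $\hat\eta_j\le 0$ (from $\bk=\be_j$), this forces $\hat\eta_i=0$ for every $i$, contradicting $\|\hat\bnu\|=1$.

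Finally I would show that every subsequential limit $\bar\bnu$ of $\{\bnu^{*,a}\}$ lies in $\ch^*$, paired with the corresponding $\bar\bx\in\cx^*$. Dual feasibility \eqref{eq-dual-1} is already in hand from the previous step. Complementary slackness \eqref{eq-dual-3} follows from the product form: if $\sum_i k_i\bar\eta_i<1$, then $x_{\bk}^{*,a}=a^{1-\sum_i k_i\nu_i^{*,a}}/c_{\bk}\to 0$, so $\bar x_{\bk}=0$. The \emph{main obstacle} is the nonnegativity condition \eqref{eq-dual-111}, since (CVX($a$)) is equality-constrained and a priori imposes no sign on its multipliers. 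To prove $\bar\eta_j\ge 0$, I would assume $\bar\eta_j<0$ and exploit the \emph{monotonicity} of $\bar\ck$: conservation $\sum_{\bk}k_j\bar x_{\bk}=\rho_j>0$ together with complementary slackness produces some active $\bk$ (i.e., with $\sum_i k_i\bar\eta_i=1$) satisfying $k_j\ge 1$. If $\bk\ne\be_j$, then $\bk-\be_j\in\ck$ by monotonicity of $\bar\ck$, and dual feasibility applied to $\bk-\be_j$ gives $1-\bar\eta_j=(\bk-\be_j)\cdot\bar\bnu\le 1$, i.e., $\bar\eta_j\ge 0$, contradicting $\bar\eta_j<0$; otherwise the only such active configuration is $\be_j$ itself, and then $\bar\eta_j=\be_j\cdot\bar\bnu=1$, again a contradiction. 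Hence $\bar\bnu\in\ch^*$, and closedness of $\ch^*$ together with boundedness of $\{\bnu^{*,a}\}$ yields $\bnu^{*,a}\to\ch^*$.
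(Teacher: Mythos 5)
Your proof is correct, but it is worth flagging that the paper does not actually prove Theorem~\ref{th-grand-fluid-convergence}: it is imported verbatim from \cite{StZh2013}, and the only in-text remark is the one-line observation that $\bx^{*,a}\to\cx^*$ follows from the u.o.c.\ convergence $L^{(a)}(\bx)-\sum_{\bk}x_{\bk}\to 0$. Your first paragraph reproduces exactly that argument (compactness of $\cx$, passage to a subsequential limit $\bar\bx$, and the two-sided sandwich $L^*\le f(\bar\bx)\le f(\bx^*)=L^*$), so there is nothing to compare there.

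The substantive content of your proposal is the dual half, $\bnu^{*,a}\to\ch^*$, which the paper simply cites. Your argument is sound and self-contained. The boundedness step is the most delicate: from the product form \eqref{eq-grand-product} and boundedness of $\cx$ one gets $\sum_i k_i\nu_i^{*,a}\le 1+O(1/b)$ (hence the one-sided bound $\nu_i^{*,a}\le 1+o(1)$ with $\bk=\be_i$), and your renormalization-to-a-unit-vector argument correctly rules out $\|\bnu^{*,a}\|\to\infty$. The key point you use --- that conservation $\sum_{\bk}k_ix_{\bk}^{*,a}=\rho_i>0$ plus finiteness of $\ck$ yields, along a subsequence, a $\bk^{(i)}$ with $k_i^{(i)}\ge 1$ and $x_{\bk^{(i)}}^{*,a}$ bounded away from $0$, forcing $\sum_j k_j^{(i)}\nu_j^{*,a}=1+o(1)$ --- is exactly what pins all the normalized limits $\hat\eta_i$ to zero and produces the contradiction. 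The identification of the limit in $\ch^*$ is also handled correctly: dual feasibility and complementary slackness both fall out of the product form, and your handling of nonnegativity via monotonicity of $\bar\ck$ (passing from an active $\bk$ with $k_j\ge 1$ to the feasible $\bk-\be_j$ to conclude $1-\bar\eta_j\le 1$) is exactly the mechanism that makes the inequality-constrained dual (DUAL$'$) the right object here; it mirrors the role monotonicity plays in the paper's passage from (LP) to (LP$'$) preceding Lemma~\ref{lem-dual'}. In short: the paper outsources this theorem, and your write-up is a valid, complete proof of it.
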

Note that the convergence $\bx^{*,a} \to \cx^*$ is an easy consequence of the fact that 
$L^{(a)}(\bx)-\sum_{\bk \in \ck} x_{\bk}\to 0$ u.o.c., as $a\to 0$.

In \cite{StZh2013}, we considered {\em fluid limit} trajectories $\bx(\cdot)$, which, speaking informally,
arise as limits of the fluid-scaled process $\bx^r(\cdot)$ as $r\to\infty$.
We showed there that for any fixed $a > 0$, under GRAND($aZ$), 
the derivative $(d/dt)L^{(a)}(\bx(t))$ along a fluid limit trajectory 
$\bx(\cdot)$ is always negative, whenever $\bx(t) \neq \bx^{*,a}$;
consequently, $L^{(a)}$ served as a Lyapunov function, which allowed us to
prove that $\bx(t)\to \bx^{*,a}$, as $t\to\infty$, along any fluid limit trajectory.
This in turn was the key property that led to establishing the fact that the random steady-state system state $\bx^r(\infty)$ concentrates on $\bx^{*,a}$ as $r\to\infty$; together with Theorem \ref{th-grand-fluid-convergence} this means the asymptotic optimality of GRAND($aZ$), if the limit $r\to\infty$ of steady states $\bx^r(\infty)$ is taken first,
and the limit $a\to 0$ is taken after that. In this paper
we will also make use of the family 
of Lyapunov functions $L^{(a)}(\bx)$ defined in \eqn{eq-L-def}, 
but use them in a different way, because the analysis of GRAND($Z^p$) cannot (as will be explained later)
rely on an analysis of fluid-limit trajectories.

We continue to introduce 
some expressions and their properties, derived in \cite{StZh2013} 
and which are closely related to the Lyapunov function $L^{(a)}$. In \cite{StZh2013}, 
the expression $\Xi(\bx)$ defined below 
in \eqn{eq-L-deriv} 
is 
the derivative of the Lyapunov function $L^{(a)}(\bx(t))$ with respect to time, along a fluid limit trajectory 
$\bx(\cdot)$. In the context of this paper, 
$\Xi$ will be interpreted and used differently; 
roughly speaking, it will be the value of the process generator when applied to $L^{(a)}$.

Consider $\bx\in \R_+^{|\ck|}$ such that $x_{\bk}>0, ~\forall \bk\in \ck$. 
For two edges $(\bk, i)$ and $(\bk', i)$, consider the ordered pair $\left((\bk, i), (\bk', i)\right)$, 
for which we define 
\begin{equation}
\label{eq:small-xi-orig}
\xi_{\bk,\bk',i} = \frac{1}{b}\big[\log (k'_i x_{\bk-\be_i} x_{\bk'}) - \log (k_i x_{\bk} x_{\bk'-\be_i}) \big]
\frac{k_i \mu_i x_{\bk} x_{\bk'-\be_i}}{x_{(i)}},
\end{equation}
where {\em by convention}, 
\beql{eq-convention-x0}
x_{\bZero} = a 
~~\mbox{and, correspondingly,}~~x_{(i)} \doteq x_{\bZero} + \sum_{\bk\in \ck:~\bk+\be_i\in \ck} x_{\bk},
\end{equation}
and 
$\xi_{\bk,\bk',i}$ is defined for
cases when either $\bk-\be_i=\bZero$ or $\bk'-\be_i=\bZero$ as well. 
It will be convenient,  for each ordered pair of edges $\left((\bk,i), (\bk',i)\right)$, 
to define 
\beql{eq-diff-log}
\chi_{\bk,\bk',i} (\bx) = \log (k'_i x_{\bk-\be_i} x_{\bk'}) - \log (k_i x_{\bk} x_{\bk'-\be_i}) =
\log \left[\frac{x_{\bk-\be_i}}{k_i x_{\bk}} \slash \frac{x_{\bk'-\be_i}}{k'_i x_{\bk'}}\right].
\end{equation}
Clearly, $|\chi_{\bk,\bk',i} (\bx)|$ is the log-scale distance between the ratios $x_{\bk-\be_i}/(k_i x_{\bk})$ and $x_{\bk'-\be_i}/(k'_i x_{\bk'})$;
in particular, $\chi_{\bk,\bk',i} (\bx) =0$ if and only if $x_{\bk-\be_i}/(k_i x_{\bk}) = x_{\bk'-\be_i}/(k'_i x_{\bk'})$. Note also that
\begin{equation}\label{eq:chi-p1}
\chi_{\bk,\bk',i} (\bx) = - \chi_{\bk',\bk,i} (\bx).
\end{equation}
Then, \eqn{eq:small-xi-orig} can be written as 
\begin{equation}\label{eq:small-xi}
\xi_{\bk,\bk',i} = \frac{k_i \mu_i x_{\bk} x_{\bk'-\be_i}}{b x_{(i)}} \chi_{\bk,\bk',i} (\bx). 
\end{equation}
and we have the following:
\begin{align}
\xi_{\bk,\bk',i} + \xi_{\bk',\bk,i} 
= &~\frac{k_i \mu_i x_{\bk} x_{\bk'-\be_i}}{b x_{(i)}} \chi_{\bk,\bk',i} (\bx) 
+ \frac{k_i \mu_i x_{\bk'} x_{\bk-\be_i}}{b x_{(i)}} \chi_{\bk',\bk,i} (\bx) \nonumber \\
= &~\frac{k_i \mu_i x_{\bk} x_{\bk'-\be_i}}{b x_{(i)}} \chi_{\bk,\bk',i} (\bx) 
- \frac{k_i \mu_i x_{\bk'} x_{\bk-\be_i}}{b x_{(i)}} \chi_{\bk,\bk',i} (\bx) \nonumber \\
= &~\frac{\mu_i}{bx_{(i)}} \chi_{\bk,\bk',i} (\bx)
[k_i x_{\bk} x_{\bk'-\be_i} - k'_i x_{\bk-\be_i} x_{\bk'}] \nonumber \\
= &~\frac{\mu_i}{bx_{(i)}} [\log (k'_i x_{\bk-\be_i} x_{\bk'}) - \log (k_i x_{\bk} x_{\bk'-\be_i}) ] 
[k_i x_{\bk} x_{\bk'-\be_i} - k'_i x_{\bk-\be_i} x_{\bk'}] 
\le 0. \label{eq-a35}
\end{align}
The inequality in \eqn{eq-a35} holds because the two terms in the square brackets are non-zero and have opposite signs, unless they both are $0$;
therefore, the inequality in \eqn{eq-a35}
is strict unless $k'_i x_{\bk-\be_i} x_{\bk'}=k_i x_{\bk} x_{\bk'-\be_i}$.

Finally, we define
\beql{eq-L-deriv}
\Xi(\bx) = \sum_i \sum_{\bk,\bk'} [\xi_{\bk,\bk',i} + \xi_{\bk',\bk,i}], 
\end{equation}
where the summation $\sum_{\bk, \bk'}$ is over all {\em unordered} pairs $\{\bk, \bk'\}$. 
Observe that
$\Xi(\bx)<0$ unless $\bx$ has a product form representation \eqn{eq-grand-product}, 
for some vector $\bnu^{*,a}$. 
This is because $\Xi(\bx) = 0$ if and only if the equality in \eqn{eq-a35} holds 
for all unordered pairs of edges $\{(\bk, i), (\bk', i)\}$, 
which is true if and only if $k'_i x_{\bk-\be_i} x_{\bk'}=k_i x_{\bk} x_{\bk'-\be_i}$ [i.e., $\chi_{\bk,\bk',i} (\bx)=0$] 
for all ordered pairs $((\bk, i), (\bk', i))$, 
a condition that is equivalent to a product form presentation \eqn{eq-grand-product} of $\bx$, 
for some $\bnu^{*,a}$. 
Let us also note that here the vector $\bnu^{*,a}$ need not correspond to 
the (unique) Lagrange multipliers of the program (CVX($a$)), 
because $\bx$ does not necessarily satisfy the equalities \eqn{eq-cons-laws-grand};
i.e., we do not necessarily have $\bx \in \cx$ (recall the definition of $\cx$ in \eqn{eq-feasible-set}). 
If $\bx \in \cx$, we will have $\bx=\bx^{*,a}$, exactly as in \eqn{eq-grand-product}.

We now provide a more concrete interpretation of \eqn{eq-L-deriv}. 
Consider a 
state $\bx$ and suppose that 
the time derivatives (or rates of changes) of
$\bx$ are 
defined 
by the following dynamics. 
Along each edge $(\bk,i)$, 
``mass'' is moving from
$x_{\bk}$ to $x_{\bk-\be_i}$ at the rate
\beql{eq-tilde-w}
\tilde w_{\bk,i} = k_i \mu_i x_{\bk}.
\end{equation}
(Value $\tilde{w}_{\bk,i}$ can be interpreted as the {\em fluid-scaled} rate 
of type-$i$ departures along the edge $(\bk,i)$.)
In addition, along each edge $(\bk,i)$, 
``mass'' is moving from
$x_{\bk-\be_i}$ to $x_{\bk}$ at the rate
\beql{eq-tilde-v}
\tilde v_{\bk,i} = \frac{\tilde \lambda_i x_{\bk-\be_i}}{x_{(i)}},
\end{equation}
where we denote 
\beql{eq-tilde-lambda}
\tilde \lambda_i = \sum_{\bk\in\ck} k_i \mu_i  x_{\bk},
\end{equation}
and $x_{(i)}$ is defined by \eqn{eq-convention-x0}. 
(Values $\tilde{v}_{\bk, i}$ and $\tilde{\lambda}_i$ defined 
in \eqn{eq-tilde-v} and \eqn{eq-tilde-lambda} can be interpreted as follows. 
If $\bx \in \cx$, then $\tilde \lambda_i = \lambda_i$ for all $i\in \ci$, by the equality constraints \eqn{eq-cons-laws-grand}, 
and $\tilde{v}_{\bk,i}$ of \eqn{eq-tilde-v} is 
the {\em fluid-scaled} rate of 
type-$i$ arrivals along the edge $(\bk,i)$, 
under the GRAND($Z^p$) algorithm. 
In general, this is not necessarily the case, 
since we consider a generic system state $\bx$, which need not belong to $\cx$.)

Equations \eqn{eq-tilde-w} and \eqn{eq-tilde-v} define the derivatives
of 
$x_{\bk}$, for all $\bk \in \ck$. 
By convention, $x_{\bZero}$ remains constant at $a$, so it has zero derivative. 
By \eqref{eq-L-partial}, 
let us note that in the expression of $\xi_{\bk,\bk',i}$ (recall \eqref{eq:small-xi-orig}), 
\[
\frac{1}{b}\big[\log (k'_i x_{\bk-\be_i} x_{\bk'}) - \log (k_i x_{\bk} x_{\bk'-\be_i}) \big] =
\]
\beql{eq-777}
\left[\frac{\partial}{\partial x_{\bk'}} L^{(a)}(\bx) - \frac{\partial}{\partial x_{\bk'-\be_i}} L^{(a)}(\bx)\right] 
- \left[\frac{\partial}{\partial x_{\bk}} L^{(a)}(\bx) - \frac{\partial}{\partial x_{\bk-\be_i}} L^{(a)}(\bx)\right].
\end{equation}
Then, by equations \eqref{eq-tilde-w} and \eqref{eq-tilde-v} (which define the derivatives of $x_{\bk}$), 
the convention \eqref{eq-convention-x0}, and equation \eqref{eq-777}, 
it is not difficult to check that $\Xi(\bx)$ is the time 
derivative of $L^{(a)}(\bx)$:
\beql{eq-xi-is-derivative}
\Xi(\bx) = \sum_{(\bk,i) \in \cm} \left[\frac{\partial}{\partial x_{\bk}} L^{(a)}(\bx) - \frac{\partial}{\partial x_{\bk-\be_i}} L^{(a)}(\bx)\right]
\big[\tilde v_{\bk,i} - \tilde w_{\bk,i}\big],
\end{equation}
where the convention \eqn{eq-formal-deriv-zero} is used.


\section{Proof of Theorem \ref{thm-zp}} 
\label{sec-grand-zp}

This section is organized as follows. 
In Subsection \ref{sec-lyap-choice}, we present the Lyapunov function that we will use 
for each system indexed by $r$, and derive some basic properties of this Lyapunov function. 
In particular, we will see that the Lyapunov function can be thought of as an approximation to 
the 
linear
objective, $\sum_{\bk \in \ck} x_{\bk}$.
We also point out the main difficulties in establishing the asymptotic optimality of GRAND($Z^p$) here. 
In Subsection \ref{sec-conc-bounds-stationary}, we show that some conditions hold for the process 
$\bx(\cdot)$ in the stationary regime, with high probability for large $r$. 
Subsection \ref{sec-lyap-drift} is the key subsection, where we introduce an ``artificial process,'' such that
the conditions considered in Subsection \ref{sec-conc-bounds-stationary} are ``enforced,'' i.e. they hold 
with probability $1$ (as opposed to  `with high probability,' as for the actual process).
For this artificial process, we derive high-probability estimates on the change of the Lyapunov function.
An informal ``roadmap'' is provided at the beginning of Subsection \ref{sec-lyap-drift} for this derivation. 
We conclude the proof of Theorem \ref{thm-zp} in Subsection \ref{sec-complete-thm-zp}, where we
use the high-probability bounds in Subsection \ref{sec-conc-bounds-stationary}
to show that, with high probability, the artificial and the actual processes coincide. This in turn allows us
to obtain high-probability estimates on the change of the Lyapunov function under the actual process.

Throughout this section,  
we drop superscript $r$ in notation -- such as
that appears in $\bx^r(t)$ -- pertaining to processes with parameter $r$. 

\subsection{A Lyapunov Function and Some Basic Properties} 
\label{sec-lyap-choice}

We will use the Lyapunov function $L^{(a)}$ for the proof of Theorem \ref{thm-zp}, 
where the parameter $a$ depends on the system scale $r$ as $a = r^{p-1}$. 
Correspondingly, $b$ will also depend on $r$, with $b=-\log a = (1-p) \log r$.
When it is clear from the context that we are working with $L^{(a)}$ for a given fixed $r$
(and corresponding $a=r^{p-1}$), we 
often drop the superscript $(a)$.

Let us now provide an important remark regarding 
the Lyapunov function $L^{(a)}$ with $a = r^{p-1}$, 
and the GRAND($Z^p$) algorithm. 
On the one hand, for any given system scale parameter $r$, $L^{(a)}$ 
has a fixed parameter $a=r^{p-1}$.
Furthermore, the expressions \eqn{eq-xi-is-derivative} and \eqn{eq-L-deriv} 
for $\Xi(\bx)$, the time derivative of $L^{(a)}$, 
make use of the {\em convention} \eqn{eq-convention-x0}, 
where $a = r^{p-1}$. 
On the other hand, GRAND($Z^p$)
itself does {\em not} use or need to know the parameter $r$,
since the actual 
number of zero-servers that it uses at time $t$ is $X_{\bZero}(t)=Z^p(t)$, 
which only uses the knowledge of $Z(t)$, the total number of customers at time $t$.

At this point, we can highlight the key technical difficulty of the analysis of GRAND($Z^p$), 
compared to that of GRAND($aZ$) in \cite{StZh2013}. Under GRAND($aZ$), the fluid-scaled quantities
$x^r_{\bk}$ are $O(1)$ for {\em all} configurations $\bk$. 
Consequently, as $r\to\infty$, the fluid-scaled
process $\bx^r(\cdot)$ converges to a well-defined fluid limit $\bx(\cdot)$, 
for any well-defined limiting initial state $\bx(0)$.
The Lyapunov function $L^{(a)}$, with fixed $a > 0$, is then used in \cite{StZh2013} to show the convergence
of all fluid trajectories $\bx(t)$ to the point $\bx^{*,a}$, as $t\to\infty$, which is the key part of the analysis 
of  GRAND($aZ$). Therefore, in \cite{StZh2013}, 
it sufficed to work with fluid limits and derivatives
of the Lyapunov function along fluid limit trajectories.

In this paper, to analyze GRAND($Z^p$), we use  $L^{(r^{p-1})}$ as a Lyapunov function for a system with given $r$.
The key difficulty is that under GRAND($Z^p$), some of the fluid-scaled $x^r_{\bk}$ are $o(1)$; 
for example, $x^r_{\bZero}=O(r^{p-1})$. 
A further complication 
is that different $x^r_{\bk}$ are on {\em different
scales} with respect to $r$; namely, they are $O(r^{s(\bk)})$ with the power $s(\bk)$ 
depending on $\bk$, $s(\bk) \in (0, 1]$. 
Because of the multi-scale system dynamics, 
the conventional fluid limit is not sufficient 
to establish a negative drift of the Lyapunov function.
Moreover, the use of
other, {\em local}, fluid limits (obtained under different space/time scalings)
also does not appear to be particularly useful, because, again,  $x^r_{\bk}$ evolve on different scales.
To overcome this difficulty, in Section~\ref{sec-lyap-drift}, we use 
direct (and more involved)
probabilistic
estimates of the Lyapunov function
increments, which are based on martingale theory, and which do not involve fluid or local fluid limits.

The following lemmas state some elementary properties related to the Lyapunov function $L^{(a)}$ with $a = r^{p-1}$. 
\begin{lem}
\label{lem-close-to-opt}
Let $\veps \in \left(0, \frac{1}{2}\right)$. 
Consider a sequence      
of points $\bx^{\circ,a}$ indexed by $a = r^{p-1}$, which satisfy 
\beql{eq-cond1111}
\left| \sum_{\bk\in \ck} k_i x_{\bk}^{\circ,a} - \rho_i\right| 
\le \frac{r^{-1/2+\veps}}{I}, ~~\forall i, \mbox{ for sufficiently large } r, 
\end{equation}
and 
$$
\chi_{\bk,\bk',i}\left(\bx^{\circ,a}\right) \to 0 ~~ \mbox{for all pairs of edges}~(\bk,i), (\bk',i),
$$
where we recall the definition of $\chi_{\bk,\bk',i}$ in \eqn{eq-diff-log}. 
Then, as $r\to\infty$, $L^{(a)}(\bx^{\circ,a}) \to L^{*}$,
where $L^{*}$ is the optimal value of (LP) defined by
\eqn{eq-opt}--\eqn{eq-non-neg}.
\end{lem}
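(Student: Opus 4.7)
The plan is to show that under the two hypotheses, $\bx^{\circ,a}$ is asymptotically of product-form type in the sense of Lemma~\ref{lem-product-form-cvx}, the associated multipliers $\bnu^{\circ,a}$ have subsequential limits in $\ch^*$, and then evaluate $L^{(a)}(\bx^{\circ,a})$ directly to match the LP value $L^*$ via duality. The starting observation is that the identity \eqn{eq-777} rewrites $\chi_{\bk,\bk',i}(\bx^{\circ,a})$ as a discrete ``curl'' of $\nabla L^{(a)}(\bx^{\circ,a})$, so its vanishing on every type-$i$ edge pair forces $\nabla L^{(a)}(\bx^{\circ,a})$ to be, asymptotically, a gradient of a linear function $\bk\mapsto \sum_i k_i\nu_i$. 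Setting $\nu_i^{\circ,a} \doteq 1 + b^{-1}\log x^{\circ,a}_{\be_i}$, using the convention $x_{\bZero}=a$ from \eqn{eq-convention-x0}, and telescoping $\chi_{\bk,\be_i,i}(\bx^{\circ,a})\to 0$ along any chain of single-coordinate steps from $\bZero$ to $\bk$, one obtains the approximate product form
\[
x^{\circ,a}_{\bk} \;=\; \frac{1}{c_{\bk}}\,a^{\,1-\sum_i k_i \nu_i^{\circ,a}}\,(1+o(1)), \qquad \bk\in\ck,
\]
uniformly over the finite set $\ck$ (each chain has length at most $\kappa-1$).

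Tightness of $\{\bnu^{\circ,a}\}$ and inclusion of every subsequential limit in $\ch^*$ both follow from the near-conservation \eqn{eq-cond1111}. If $\nu_i^{\circ,a}\ge 1+\delta$ along a subsequence, then $x^{\circ,a}_{\be_i}$ is of order $a^{-\delta}\to\infty$, contradicting $y^{\circ,a}_i\le\rho_i+1$; if $\nu_i^{\circ,a}\to-\infty$, then the product form forces $x^{\circ,a}_{\bk}\to 0$ for every $\bk$ with $k_i\ge 1$, so $y^{\circ,a}_i\to 0\ne\rho_i$. Applied to any limit point $\bnu^\circ$, the same blow-up reasoning gives $\nu_i^\circ\ge 0$ and $\sum_i k_i\nu_i^\circ\le 1$ for every $\bk\in\ck$; and whenever $\sum_i k_i\nu_i^\circ<1$, the product form immediately gives $x^{\circ,a}_{\bk}\to 0$, so the corresponding limit point $\bx^\circ\in\cx$ (guaranteed by \eqn{eq-cond1111}) satisfies complementary slackness \eqn{eq-dual-3} with $\bnu^\circ$. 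By Lemma~\ref{lem-dual'}, $\bnu^\circ\in\ch^*$ and $\bx^\circ\in\cx^*$.

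The concluding computation is then direct. For an exact product-form point with multipliers $\bnu$, substitution into \eqn{eq-L-def} yields the clean identity $L^{(a)}(\bx) = \sum_i \nu_i\, y_i(\bx) - b^{-1}\sum_{\bk} x_{\bk}$; in our approximate setting this holds with an additive $o(1)$ error, using that $\sum_{\bk} x^{\circ,a}_{\bk}$ stays bounded thanks to tightness of $\bnu^{\circ,a}$. Substituting $y^{\circ,a}_i = \rho_i + O(r^{-1/2+\veps})$ and invoking boundedness of $\bnu^{\circ,a}$,
\[
L^{(a)}(\bx^{\circ,a}) \;=\; \sum_i \nu_i^{\circ,a}\rho_i + o(1).
\]
Along any subsequence on which $\bnu^{\circ,a}\to\bnu^\circ\in\ch^*$, LP duality gives $\sum_i \nu_i^\circ \rho_i = L^*$, so $L^{(a)}(\bx^{\circ,a})\to L^*$ along that subsequence; as this holds along every subsequence, the full sequence converges.

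The main technical obstacle I anticipate is the careful propagation of the $o(1)$ error in the approximate product form uniformly across $\ck$, and particularly the handling of the vanishing components $x^{\circ,a}_{\bk}$ corresponding to $\sum_i k_i \nu_i^\circ<1$. These components contribute to $L^{(a)}$ a term of order $b\cdot x^{\circ,a}_{\bk} = b\cdot a^{1-\sum_i k_i \nu_i^{\circ,a}}$, which tends to $0$ because $b=(1-p)\log r$ grows only logarithmically while $a^{1-\sum k_i \nu_i^{\circ,a}}$ decays polynomially in $r$; but rigorous control requires uniform-in-$\bk$ quantitative bounds ensuring that the exponent $1-\sum_i k_i \nu_i^{\circ,a}$ does not approach $0$ faster than the $o(1)$ perturbations inherited from $\chi\to 0$.
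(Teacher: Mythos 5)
Your proof is correct, and after a shared opening step it takes a genuinely different route from the paper's. Both arguments start the same way: set $\nu_i^{\circ,a}=1-\log(x^{\circ,a}_{\be_i})/\log a$ and telescope $\chi\to 0$ along single-coordinate chains from $\bZero$ to $\bk$ to obtain the approximate product form $x^{\circ,a}_{\bk}=c_{\bk}^{-1}a^{1-\sum_i k_i\nu_i^{\circ,a}}(1+o(1))$, uniformly over the finite set $\ck$. The paper then introduces the \emph{exact} product-form point $\bx^{*,a}$ with those same multipliers, shows $\|\bx^{\circ,a}-\bx^{*,a}\|\to0$, observes that $\bx^{*,a}$ minimizes $L^{(a)}$ over a constraint set whose right-hand side tends to $\rho_i$, and finishes by invoking the uniform convergence $L^{(a)}\to\sum_{\bk}x_{\bk}$ together with (a perturbed version of) Theorem~\ref{th-grand-fluid-convergence}. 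You instead stay in the dual variables: you prove tightness of $\{\bnu^{\circ,a}\}$ from the near-conservation \eqn{eq-cond1111}, identify every subsequential limit as an element of $\ch^*$ via complementary slackness and Lemma~\ref{lem-dual'}, and then evaluate $L^{(a)}(\bx^{\circ,a})$ directly from the product-form identity $L^{(a)}(\bx)=\sum_i\nu_i\sum_{\bk}k_ix_{\bk}-b^{-1}\sum_{\bk}x_{\bk}$, concluding by strong LP duality that $\sum_i\nu^{\circ}_i\rho_i=L^*$. This bypasses the paper's appeal to the $L^{(a)}\to\sum_{\bk}x_{\bk}$ uniform-convergence fact and to Theorem~\ref{th-grand-fluid-convergence}, giving a more self-contained argument that makes the duality explicit, at the price of a somewhat longer tightness and complementary-slackness verification.

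One clarification on the technical concern you flag at the end: it is a non-issue, and no quantitative lower bound on the exponents $1-\sum_ik_i\nu_i^{\circ,a}$ is needed for the $L^{(a)}$ computation. Writing $x^{\circ,a}_{\bk}=c_{\bk}^{-1}a^{1-\sum_ik_i\nu_i^{\circ,a}}(1+\epsilon_{\bk}(r))$ with $\epsilon_{\bk}(r)\to0$, direct substitution into \eqn{eq-L-def} gives the \emph{exact} identity
\[
L^{(a)}(\bx^{\circ,a})=\sum_i\nu_i^{\circ,a}\!\sum_{\bk}k_ix^{\circ,a}_{\bk}-\frac{1}{b}\sum_{\bk}x^{\circ,a}_{\bk}+\frac{1}{b}\sum_{\bk}x^{\circ,a}_{\bk}\log\left(1+\epsilon_{\bk}(r)\right),
\]
and since $\sum_{\bk}x^{\circ,a}_{\bk}\le\sum_i\sum_{\bk}k_ix^{\circ,a}_{\bk}=1+o(1)$ is bounded and $b\to\infty$, the last two terms are $O(1/b)$ and $o(1/b)$. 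Control over the decay rate of the exponent matters only for the complementary-slackness check, where only the limiting sign $1-\sum_ik_i\nu_i^{\circ}>0$ (along the chosen subsequence) is used.
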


{\em Proof.} For each $r$, and the corresponding $a=r^{p-1}$, denote 
$$\nu^{*,a}_i = 1 - \frac{\log(x^{\circ,a}_{\be_i})}{\log a}, ~~\forall i \in \ci.$$ 
Define $\bx^{*,a} = \left\{x_{\bk}^{*,a}, \bk \in \ck\right\}$ via product
form \eqn{eq-grand-product} corresponding to these $\nu^{*,a}_i$:
\beql{eq-grand-product-copy}
x^{*,a}_{\bk} = \frac{a}{c_{\bk}} \exp \left[b \sum_i k_i \nu_i^{*,a}\right] = \frac{1}{c_{\bk}} a^{1-\sum_i k_i \nu_i^{*,a}} , ~~ \bk \in \ck.
\end{equation}
Note that for any $r$ (and correspondingly, $a = r^{p-1}$), 
$\bx^{*,a}$ and $\bnu^{*,a}$ need not be the optimal primal and dual solutions to 
(CVX($a$)), defined by \eqn{eq-opt-grand}-\eqn{eq-non-neg-grand}, 
since $\bx^{*, a}$ need not belong to $\cx$.
Instead, point $\bx^{*,a}$ is the optimal solution for the problem
\begin{flalign}
 & &  \text{minimize } & L^{(a)}(\bx) & \\
&&\mbox{subject to } & \sum_{\bk\in\ck} k_i x_{\bk} = \sum_{\bk\in\ck} k_i x_{\bk}^{*,a}, ~~\forall i\in \ci, & \label{eq-equality-star}\\
&& & \quad \quad \quad x_{\bk} \geq 0, ~~\forall \bk \in \ck. 
\end{flalign}

We now claim that for all $\bk \in \ck$, 
\begin{equation}\label{eq-ratio-prod-form}
x^{\circ,a}_{\bk}/x^{*,a}_{\bk} \to 1 \mbox{ as } r \to \infty.
\end{equation} 
By convention, $x^{\circ,a}_{\bZero} = x^{*,a}_{\bZero} = a = r^{p-1}$. 
It is also easy to check that for all $i \in \ci$, $x^{\circ,a}_{\be_i} = x^{*,a}_{\be_i}$.
Consider a general configuration $\bk \in \ck$. 
For an edge $(\bk, i)$, we use $\tilde{\chi}_{(\bk,i)}$ as a shorthand for $\chi_{\be_i,\bk,i} (\bx^{\circ,a})$, 
and we have
\begin{eqnarray*}
\tilde{\chi}_{(\bk,i)} &=& \chi_{\be_i,\bk,i} (\bx^{\circ,a}) \\
&=& \log \left(k_i x^{\circ,a}_{\bZero} x^{\circ,a}_{\bk}\right) - \log \left(x^{\circ,a}_{\be_i} x^{\circ,a}_{\bk-\be_i}\right)\\
&=& \log \left(k_i a x^{\circ,a}_{\bk}\right) - \log \left(x^{*,a}_{\be_i} x^{\circ,a}_{\bk-\be_i}\right).
\end{eqnarray*}
By some simple algebraic manipulation, we have the recursion 
\begin{equation}\label{eq-circ-x-recursion}
x^{\circ, a}_{\bk} = \frac{e^{\tilde{\chi}_{(\bk,i)}}x^{*,a}_{\be_i}x^{\circ,a}_{\bk-\be_i}}{k_i a}
= \frac{1}{k_i} e^{\tilde{\chi}_{(\bk,i)}} a^{-\nu_i^{*,a}}x^{\circ,a}_{\bk-\be_i}.
\end{equation}
Consider a path that connects configuration $\bk$ and $\bZero$ using 
the following sequence of edges: the first $k_1$ edges are 
$(\bk, 1), (\bk-\be_1, 1), \cdots, (\bk - (k_1-1) \be_1, 1)$, 
followed by $k_2$ edges of the form 
$(\bk - (k_1-1) \be_1, 2), (\bk - (k_1-1) \be_1 - \be_2, 2), \cdots, (\bk - (k_1-1) \be_1 - (k_2 -1)\be_2, 2)$, 
etc. Thus, the path uses $\sum_i k_i$ edges to connect $\bk$ with $\bZero$. 
Using $\tilde{e}$ to denote a generic edge that belongs to this path, and using the recursion \eqn{eq-circ-x-recursion}, 
it is easy to see that 
\[
x^{\circ, a}_{\bk} = \frac{1}{c_{\bk}} \left[\prod_{\tilde{e}} \exp\left(\tilde{\chi}_{\tilde{e}}\right)\right] a^{1-\sum_i k_i \nu_i^{*,a}} 
= \left[\prod_{\tilde{e}} \exp\left(\tilde{\chi}_{\tilde{e}}\right)\right] x^{*,a}_{\bk}.
\]
Thus, 
\[
\frac{x^{\circ, a}_{\bk}}{x^{*,a}_{\bk}} = \left[\prod_{\tilde{e}} \exp\left(\tilde{\chi}_{\tilde{e}}\right)\right] \to 1,
\]
since $\tilde{\chi}_{\tilde{e}} \to 0$ for all edges $\tilde{e}$. This completes the proof of the claim. 

By \eqn{eq-ratio-prod-form}, we have 
$$
\|\bx^{\circ,a} - \bx^{*,a}\| \to 0, ~~r\to\infty.
$$
In addition, using condition \eqn{eq-cond1111}, we have 
that for each $i \in \ci$, the term $\sum_{\bk\in\ck} k_i x_{\bk}^{*,a}$ on the RHS of
the equality constraint \eqn{eq-equality-star} 
converges to $\rho_i$ as $r\to\infty$.
Furthermore, recall that $\left|L^{(a)}(\bx) - \sum_{\bk \in \ck} x_{\bk}\right| \to 0$, uniformly on $\bx$ from a bounded set.
Therefore, we must have $L^{(a)}(\bx^{*,a}) \to L^*$.
$\Box$

Lemma~\ref{lem-close-to-opt} implies the following property that we will actually use.

\begin{lem}
\label{lem-close-to-opt2}
For any $\gamma>0$, there exists $\delta_1>0$ such that, for all sufficiently large $r$ and $a=r^{p-1}$,
condition \eqn{eq-cond1111} and the condition that 
$$
|\chi_{\bk,\bk',i}| \le \delta_1 ~~ \mbox{for all pairs of edges}~(\bk,i), (\bk',i)
$$
imply
$$
L^{(a)}(\bx) - L^* \le \gamma.
$$
\end{lem}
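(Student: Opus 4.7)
The plan is to prove Lemma~\ref{lem-close-to-opt2} by a simple contradiction argument that reduces it to Lemma~\ref{lem-close-to-opt}. Note that Lemma~\ref{lem-close-to-opt2} is just the ``uniform'' or ``$\varepsilon$-$\delta$'' restatement of Lemma~\ref{lem-close-to-opt}: the previous lemma asserts a sequential limit, and here we extract a quantitative version by contradiction via diagonalization.

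Concretely, I would suppose that the conclusion fails for some fixed $\gamma > 0$. Then for every $\delta_1 > 0$ and every threshold $r_0$, there exist $r \geq r_0$ and a point $\bx$ (which I will think of as indexed by the corresponding $a = r^{p-1}$) satisfying both the hypothesis~\eqn{eq-cond1111} and $|\chi_{\bk,\bk',i}(\bx)| \leq \delta_1$ for all relevant edge pairs, yet $L^{(a)}(\bx) - L^* > \gamma$. Setting $\delta_1 = 1/n$ and $r_0 = n$, I extract a sequence $r_n \to \infty$ and corresponding points $\bx^{(n)}$, each indexed by $a_n = r_n^{p-1} \to 0$.

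By construction, this extracted sequence satisfies the hypotheses of Lemma~\ref{lem-close-to-opt}: condition~\eqn{eq-cond1111} holds for each $r_n$ (and $r_n \to \infty$), and $\chi_{\bk,\bk',i}(\bx^{(n)}) \to 0$ for every pair of edges because $|\chi_{\bk,\bk',i}(\bx^{(n)})| \leq 1/n$. Lemma~\ref{lem-close-to-opt} then yields $L^{(a_n)}(\bx^{(n)}) \to L^*$, directly contradicting $L^{(a_n)}(\bx^{(n)}) - L^* > \gamma$ for every $n$.

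The argument is essentially routine once Lemma~\ref{lem-close-to-opt} is in hand; the only thing worth being careful about is the diagonal extraction (choosing $\delta_1 = 1/n$ simultaneously with $r_0 = n$) so that the extracted sequence both makes $r_n \to \infty$ and drives all the logarithmic discrepancies $\chi_{\bk,\bk',i}$ to zero. There is no real analytic obstacle, since Lemma~\ref{lem-close-to-opt} already did the substantive work of converting small $\chi_{\bk,\bk',i}$ into closeness of $\bx^{\circ,a}$ to a product-form vector $\bx^{*,a}$ and then into $L^{(a)}(\bx^{\circ,a}) \to L^*$.
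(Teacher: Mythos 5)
Your argument is correct and is precisely the intended deduction: the paper gives no explicit proof of Lemma~\ref{lem-close-to-opt2}, stating only that it follows from Lemma~\ref{lem-close-to-opt}, and your contradiction-plus-diagonalization argument (choosing $\delta_1 = 1/n$, $r_0 = n$ simultaneously to extract a sequence violating the conclusion while satisfying the hypotheses of Lemma~\ref{lem-close-to-opt}) is the standard way to convert that sequential statement into the uniform one. One small point worth noting explicitly in a write-up is that Lemma~\ref{lem-close-to-opt} applies along the extracted subsequence $r_n \to \infty$ (not just the full ambient sequence of $r$'s), which is clear from its proof since nothing there depends on $r$ exhausting the original discrete sequence.
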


\subsection{High Probability Steady-State Bounds on $\bx(\cdot)$}\label{sec-conc-bounds-stationary}
\label{subsec-wp-bounds}

In this subsection we show that certain conditions hold under GRAND($Z^p$)
in steady-state with high probability, converging to $1$ as $r\to\infty$. 
Moreover, these conditions hold over time intervals with length increasing with $r$ as $r^\alpha$, $\alpha>0$.

From now on in the paper, we fix $p$ satisfying \eqn{eq-cond-p}, and a corresponding $s$, satisfying
$7/8 < s < 1- \kappa(1-p)$. 

We often refer to the following conditions, at a given time $t\ge 0$, with some constants 
$c>0$ and $\veps>0$:
\beql{eq-cond1}
|Z(t)/r - 1| \le r^{-1/2+\veps};
\end{equation} 
\beql{eq-cond11}
\left| \sum_{\bk} k_i x_{\bk}(t) - \rho_i \right|
\le \frac{r^{-1/2+\veps}}{I}, ~~\forall i;
\end{equation} 
\beql{eq-cond2}
x_{\bk}(t) \ge c r^{s-1}, ~~ \forall \bk.
\end{equation}
It is easy to see that \eqn{eq-cond11} implies \eqn{eq-cond1}.

\begin{lem}
\label{lem-p1}
Let $\alpha > 0$ and $\veps > 0$. 
Consider our system in the stationary regime for each $r$. Then,
$$
\pr\left\{\mbox{condition \eqn{eq-cond11} holds for all} ~t\in [0,r^\alpha]  \right\} \to 1, ~~r\to\infty.
$$
\end{lem}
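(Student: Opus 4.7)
The plan is to exploit the key observation, already noted in Section~\ref{ssec-grand-algo}, that the process $\{Y_i(t): t \geq 0\}$ evolves as an $M/M/\infty$ queue with arrival rate $\lambda_i r$ and service rate $\mu_i$, \emph{irrespective} of the placement algorithm in force. In particular, in the stationary regime $Y_i(t)$ is Poisson with mean $\rho_i r$ for each fixed $t$, and the processes $\{Y_i(\cdot)\}_{i \in \ci}$ are mutually independent. Since condition \eqref{eq-cond11} is just the rescaled statement $|Y_i(t) - \rho_i r| \leq \tfrac{1}{I}r^{1/2+\veps}$ for all $i\in\ci$, and $|\ci|=I$ is finite, it suffices by a union bound to fix an index $i$ and show that this bound holds for all $t \in [0, r^\alpha]$ with probability tending to $1$.

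First I would obtain a pointwise concentration estimate: for any fixed $t$, a standard Chernoff bound for Poisson random variables gives $\pr\bigl(|Y_i(t) - \rho_i r| > \tfrac{1}{2I} r^{1/2+\veps}\bigr) \leq 2\exp(-c_1 r^{2\veps})$ for some $c_1 > 0$ and all sufficiently large $r$. Next I would introduce a grid $t_j = j\Delta$, $j = 0, 1, \ldots, N$, with $N = \lceil r^\alpha / \Delta \rceil$ and $\Delta = r^{-\beta}$ for a suitable $\beta > 1/2$, and apply a union bound over the $N+1$ grid points to conclude that, with probability at least $1 - O\bigl(r^{\alpha+\beta} \exp(-c_1 r^{2\veps})\bigr)$, the pointwise estimate holds simultaneously at every grid point.

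The remaining ingredient is control of the oscillation of $Y_i(\cdot)$ between consecutive grid points. I would bound $\sup_{t \in [t_j, t_{j+1}]} |Y_i(t) - Y_i(t_j)|$ by the total number of arrival and departure events of the $M/M/\infty$ queue in $[t_j, t_{j+1}]$. On the event that $Y_i(t_j) \le 2 \rho_i r$ (implied by the pointwise grid estimate for large $r$), a straightforward stochastic domination shows that this count is bounded by a Poisson random variable with mean $O(r\Delta)$, which by Chernoff stays well below $\tfrac{1}{2I}r^{1/2+\veps}$ with overwhelming probability, provided $\beta$ is large enough (e.g., $\beta = 1$). A second union bound over the $N$ subintervals, combined with the grid estimate and the triangle inequality, then yields $|Y_i(t) - \rho_i r| \leq \tfrac{1}{I}r^{1/2+\veps}$ for all $t \in [0, r^\alpha]$, with probability tending to $1$.

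I expect the main technical subtlety to lie in the between-grid-point control, since the instantaneous rate of departures depends on the running value of $Y_i(\cdot)$ rather than being a free Poisson process. This circularity is resolved by the stochastic domination sketched above, which in turn leans on the grid control to preclude large excursions of $Y_i$ inside each subinterval. All other steps are routine concentration and union bound arguments, and the choice $\beta = 1$ is comfortably consistent with both requirements.
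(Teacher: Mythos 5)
Your proposal is correct and follows essentially the same route as the paper's proof in Appendix~\ref{proof-property-1}: discretize $[0,r^\alpha]$, apply Poisson concentration for $Y_i$ at grid times (valid in the stationary regime since $Y_i(t)$ is Poisson with mean $\rho_i r$ for each fixed $t$), and control the oscillation on each subinterval via the $M/M/\infty$ representation \eqn{eq:m/m/infty}, bounding arrivals first and then using the resulting bound on $Y_i$ inside the subinterval to dominate the state-dependent departure count by a free Poisson process. The paper uses grid spacing $r^{-1/2}$ and invokes stationarity to collapse the union bound to a single subinterval, whereas you propose spacing $r^{-1}$ with a direct union bound over all grid points; this is only a cosmetic difference, and your version works.
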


The proof of Lemma~\ref{lem-p1} is provided in Appendix~\ref{proof-property-1}. 
The statement of Lemma \ref{lem-p1} is very intuitive, because we know that in the stationary regime, 
$Y_i(t)$ has Poisson distribution with mean $\rho_i r$ for any $t$. 
Since $Y_i(t) = \sum_{\bk\in \ck}k_i X_{\bk}(t)$, 
condition \eqn{eq-cond11} clearly holds for any given $t$. 
The proof shows that, in fact, with high probability, \eqn{eq-cond11} holds on any $r^{\alpha}$-long time interval.

\begin{lem}
\label{lem-p2}
Let $\alpha > 0$.
Consider our system in the stationary regime for each $r$. Then, there exists $c>0$ such that 
$$
\pr\{\mbox{condition \eqn{eq-cond2} holds for } ~t\in [0,r^\alpha]  \} \to 1, ~~r\to\infty.
$$
\end{lem}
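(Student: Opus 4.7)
The plan is to prove the stronger assertion that there exist constants $c_{\bk}>0$, $\bk\in\ck$, such that, with probability tending to $1$ as $r\to\infty$,
\[
X_{\bk}(t) \ge c_{\bk}\, r^p \quad \text{for every } \bk\in\ck \text{ and every } t\in[0,r^\alpha].
\]
Since $\kappa\ge 2$, condition \eqref{eq-cond-p} gives $s<1-\kappa(1-p)<p$, so the display above yields $x_{\bk}(t)\ge c_{\bk}\,r^{p-1}\ge c\,r^{s-1}$ for $r\ge 1$, which is \eqref{eq-cond2} with $c=\min_{\bk}c_{\bk}$. I would argue by induction on the depth $m=\sum_i k_i$, propagating the lower bound outward from $\bZero$ along arrival edges.

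\emph{Base case ($m=0$).} Apply Lemma~\ref{lem-p1} with some $\veps\in(0,1/2)$ to obtain a high-probability event $\mathcal{E}_0$ on which $Z(t)\in[r/2,2r]$ throughout $[0,r^\alpha]$. Then $X_{\bZero}(t)=\lceil Z(t)^p\rceil \ge (r/2)^p$ on $\mathcal{E}_0$, so take $c_{\bZero}=2^{-p}$.

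\emph{Inductive step.} For $\bk$ with $m\ge 1$, pick any $i$ with $\bk-\be_i\in\bar\ck$ (guaranteed by monotonicity), and let $\mathcal{E}_{m-1}$ be the high-probability event on which $X_{\bk-\be_i}(t)\ge c_{m-1}r^p$ throughout $[0,r^\alpha]$. On $\mathcal{E}_0\cap\mathcal{E}_{m-1}$, $X_{(i)}(t)\le Z(t)+X_{\bZero}(t)\le 3r$, so the rate of arrivals into $\bk$ along $(\bk,i)$ is bounded below by
\[
\Lambda_i\,\frac{X_{\bk-\be_i}(t)}{X_{(i)}(t)}\;\ge\;\frac{\lambda_i c_{m-1}}{3}\,r^p.
\]
The total rate at which $X_{\bk}(t)$ can decrease---type-$i$ and type-$j$ departures, plus type-$j$ arrivals re-routing $\bk$ into $\bk+\be_j$---is bounded above by $C(r)\,X_{\bk}(t)$ for some deterministic $C(r)$. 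Choosing $c_m>0$ small enough that $C(r)\,c_m r^p$ stays below the above arrival rate on the set $\{X_{\bk}\le c_m r^p\}$ gives a strictly positive compensator drift of order $r^p$ whenever $X_{\bk}$ dips below $c_m r^p$. A Freedman/Azuma-style concentration inequality for the compensated jump martingale of $X_{\bk}(\cdot)$ (whose total jump rate is $O(r)$ and whose jumps are $\pm 1$), together with a union bound over a fine discretization of $[0,r^\alpha]$, then yields $X_{\bk}(t)\ge c_m r^p$ uniformly on $[0,r^\alpha]$ with probability $1-o(1)$, closing the induction.

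\emph{Main obstacle.} The delicate point is the control of $C(r)$: since the denominators $X_{(j)}(t)$ are only a priori bounded below by $X_{\bZero}=\Theta(r^p)$, the crude estimate gives $C(r)=O(r^{1-p})$, and then the negative drift $C(r)\,c_m r^p=O(c_m r)$ would swamp the positive contribution $\Theta(r^p)$ since $p<1$. The threshold $p>1-1/(8\kappa)$ is what provides enough slack in the hierarchy of scales, and the resolution is to bootstrap the denominators: on $\mathcal{E}_0$ one has the deterministic identity $\sum_{\bk}(\sum_i k_i)X_{\bk}(t)=Z(t)\ge r/2$, and a combinatorial argument limiting the fraction of this mass that can reside in configurations saturated for type~$j$ upgrades $X_{(j)}(t)$ to $\Omega(r)$ on a further high-probability event, reducing $C(r)$ to $O(1)$. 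Once these scales are reconciled, the martingale concentration and the induction over the finitely many depths $m<\kappa$ proceed routinely.
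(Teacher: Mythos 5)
Your overall strategy --- induction on the depth $m=\sum_i k_i$, propagating a lower bound out of $\bZero$ along arrival edges, and controlling the excursions of $X_{\bk}$ over short time windows by Poisson/martingale concentration --- is exactly the paper's. You also correctly identify the central obstruction: the only a priori lower bound on the denominators $X_{(j)}$ is $X_{(j)}\ge X_{\bZero}=\Theta(r^p)$, which makes the arrival-out rate $\Lambda_j X_{\bk}/X_{(j)}$ of order $r^{1-p}X_{\bk}$ and, at the level $X_{\bk}\approx r^p$, of order $r$, swamping the gain rate $\Theta(r^p)$.

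However, the fix you propose does not work, and this is a genuine gap. You assert that the identity $\sum_{\bk}(\sum_i k_i)X_{\bk}(t)=Z(t)\ge r/2$ together with a ``combinatorial argument'' forces $X_{(j)}(t)=\Omega(r)$. That is false. Under GRAND($Z^p$) the stationary distribution concentrates near a product-form point $x^{*,a}_{\bk}=c_{\bk}^{-1}a^{1-\sum_i k_i\nu_i}$ whose $\Theta(1)$ mass lies precisely on configurations that are \emph{saturated} for every type, i.e.\ configurations satisfying $\sum_i k_i\nu_i=1$; every unsaturated $\bk$ has $x_{\bk}=o(1)$. Concretely, take $I=1$, $\bar\ck=\{0,1,2\}$, $\rho_1=1$. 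Then (to leading order) $x_2\approx 1/2$, $x_1\approx a^{1/2}=r^{(p-1)/2}$, $x_{\bZero}=a=r^{p-1}$, and a type-1 arrival can go only to a server of configuration $0$ or $1$, so $X_{(1)}=X_{\bZero}+X_1\approx r^{(p+1)/2}=o(r)$. Essentially all of the customer mass sits in saturated configuration $\bk=2$, so the weighted-mass identity gives you no control on $X_{(j)}$. With $X_{(j)}$ only $\Theta(r^p)$ a priori (and you cannot improve this by the induction itself, since the inductive hypothesis only contributes $O(1)$ further terms each of order $r^p$), the drift at level $c_m r^p$ is negative of order $r$ and your induction does not close.

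The correct resolution --- and what the paper does in Appendix~B --- is to weaken the target lower bound to be depth-dependent. Rather than $X_{\bk}\ge c\,r^p$ uniformly, one proves $X_{\bk}\ge c\,r^{s(\bk)}$ with $s(\bk)=1-\big(1+\sum_i k_i\big)(1-p)$, i.e.\ the exponent drops by exactly $1-p$ at each depth step. Then at level $X_{\bk}\approx c\,r^{s(\bk)}$ the crude loss bound $O\big(r^{1-p}X_{\bk}\big)=O\big(c\,r^{s(\bk)+1-p}\big)=O\big(c\,r^{s(\tilde\bk)}\big)$ (with $\tilde\bk=\bk-\be_\iota$) is of the \emph{same} order as the gain rate $\Lambda_\iota X_{\tilde\bk}/X_{(\iota)}\gtrsim r^{s(\tilde\bk)}$, and choosing $c$ small makes the net drift strictly positive, closing the induction. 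Condition~\eqn{eq-cond-p} is precisely what guarantees $s(\bk)\ge 1-\kappa(1-p)>s$ for all $\bk$, so this weaker, hierarchical bound still yields condition~\eqn{eq-cond2}. Your stronger claim $X_{\bk}\ge c_{\bk}r^p$ may well be true of the stationary state (it is consistent with the product form), but it is not reachable by the drift argument you set up, and the route through ``$X_{(j)}=\Omega(r)$'' is blocked.
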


The proof of Lemma~\ref{lem-p2} is provided in Appendix~\ref{proof-property-2}, 
where we establish a stronger property: 
with high probability, there exists some $c > 0$ such that 
for all $\bk \in \ck$ and for all $t \in [0, r^{\alpha}]$, 
$x_{\bk}(t) \geq cr^{s(\bk)-1}$, where $s(\bk) = 1 - \left(\sum_i k_i+1\right)(1-p) \geq 1 - \kappa(1-p) > s$.
Here we provide a sketch of the argument used to establish this stronger property. 
By Lemma~\ref{lem-p1}, with high probability,
the fluid-scaled total number of customers, $Z(t)/r$, is in steady state close to $1$ for all $t\in [0,r^\alpha]$.
Therefore, $x_{\bZero}$ is close to $Z^p/r \approx r^{p-1}$, and the property 
should hold
for $\bk=\bZero$. Then, we use an 
induction argument. Fix configuration $\bk \neq \bZero$, and suppose the property 
is true for all configurations $\bk' \ne \bk$, $\bk' \le \bk$. 
We argue that it should then hold
for $\bk$ 
and some $c$ (possibly rechosen to be smaller), as follows.
 Pick $\bk'=\bk-\be_i$ for some $i$; such $\bk'$ exists since $\bk \neq \bZero$. 
We account separately for transitions that increase $x_{\bk}$ and those that decrease $x_{\bk}$. 
First, type-$i$ arrivals along edge $(\bk, i)$ increase $x_{\bk}$, 
and take place at (unscaled) rate $\lambda_i r x_{\bk'}/x_{(i)}$. 
Furthermore, this arrival process 
is lower bounded by a Poisson process of (unscaled) 
rate $c_1 r \left(r^{s(\bk')}/r\right) = c_1 r^{s(\bk')}$, 
for a constant $c_1>0$, 
because $X_{(i)}/r \le (X_{\bZero} + Z)/r$ is essentially upper bounded by a constant, 
and $x_{\bk'} \geq c (r^{s(\bk')}/r)$ by the induction hypothesis. 
Second, transitions that would decrease $x_{\bk}$ consist of departures from $\bk$ 
and arrivals into $\bk$. {\em When $x_{\bk}(t) \le c r^{s(\bk)-1}$}, 
the (unscaled) rates of these transitions are upper bounded by 
$c_2 c r^{s(\bk)}$ and $c_3 c r (r^{s(\bk)}/r^p) = c_3 c r^{s(\bk) + 1 -p}$ 
(because $X_{(i)}\geq X_{\bZero} \approx r^p$), respectively, 
for some constants $c_2>0$ and $c_3>0$. Thus, 
the transitions that would decrease $x_{\bk}$ are upper bounded by a Poisson process with (unscaled) rate 
$c_2 c r^{s(\bk)} + c_3 c r^{s(\bk) + 1 -p} \leq c_4 c r^{s(\bk) + 1 -p} = c_4 c r^{s(\bk')}$, 
for some $c_4>0$. 
Therefore, if we choose constant $c$ to be sufficiently small, then
when $x_{\bk}(t) \le c r^{s(\bk)}/r$, the rate of transitions that increase $x_{\bk}$ dominates 
the rate of transitions that decrease it, at least by some factor greater than $1$. 
Thus, with high probability, $x_{\bk}(t)$ ``should'' stay above $c r^{s(\bk)-1}$.

\subsection{Lyapunov Function Drift Estimates for an Artificial Process}
\label{sec-lyap-drift}

We will start with an informal overview of this subsection, and how its results will be used in the next one.

In this subsection, we consider an {\em artificial} process for which conditions \eqn{eq-cond1}-\eqn{eq-cond2} 
are ``enforced'' over a $r^{s-1}$-long interval; the construction is such that the {\em original} process pathwise
coincides with the artificial one, as long as \eqn{eq-cond1}-\eqn{eq-cond2} hold.
For the artificial process, we obtain high probability estimates of the increment of a certain process $F(t)$ over an $r^{s-1}$-long interval. The construction of $F(t)$ is such that, {\em if sample paths of the artificial and original processes coincide}, then $F(t)$ is equal to the increment of the Lyapunov function $L(\bx(t))$.
(Recall that $L=L^{(r^{p-1})}$ depends on $r$.)

The goal of this subsection is to obtain high probability estimates of $F(t)$. To do that, we use 
the martingale representation of $F(t)$, in which the compensator is $B(t)=\int_0^t AL(\bx(\xi)) d\xi$,
where operator $A$ is closely related to the generator of the artificial (and original) process. 
We then derive bounds on $|AL(\bx(\xi))-\Xi(\bx(\xi))|$, which allows us to obtain a bound 
on $|B(t) - \int_0^t \Xi(\bx(\xi))d\xi |$. 
We also obtain an upper bound on the quadratic characteristic (predictable quadratic variation) of the martingale $M(t)=F(t)-B(t)$, which, by Doob's inequality, gives us a high probability bound on $|M(t)|$. Combining the latter two bounds, we obtain one 
on $|F(t) - \int_0^t \Xi(\bx(\xi))d\xi|$, namely we can claim that it is ``small.'' Finally, we observe that $\Xi(\bx(\xi))\le 0$ 
for any $\bx(\xi)$, and
use Lemma \ref{lem-close-to-opt2} to show that $\Xi(\bx(\xi))$ is negative and bounded away from $0$ as long as  $\bx(\xi)$ is bounded away from the optimal set $\cx^*$ of (LP) in \eqref{eq-opt}--\eqref{eq-non-neg}. Combining all the above, we can claim that, with high probability, $F(t)$
has a negative (bounded away from zero) increment over a $r^{s-1}$-long interval, in which $\bx(t)$ ``stays away'' from the optimal set $\cx^*$.

In Subsection \ref{sec-complete-thm-zp} we will
use the fact that \eqn{eq-cond1}-\eqn{eq-cond2} hold for the original process with high probability, 
to couple the artificial and original processes in a way such that they coincide with high 
probability. Therefore, with high probability, the increment of the Lyapunov function $L(\bx(t))$ for the original process is equal to
$F(t)$, for which the estimates of this subsection apply. 

This ends the informal overview. Let us proceed with formal results.

In Subsections~\ref{sec-lyap-drift} and \ref{sec-complete-thm-zp} we use the following convention. By $C_g$ we denote a generic positive constant, which does not depend on the system parameters or
on the scaling parameter $r$; the value of $C_g$ may be different in different expressions, where it appears.

Consider a single $r^{s-1}$-long interval, and let $T = r^{s-1}$. 
The initial state is such that, at $t=0$, 
conditions \eqn{eq-cond1}, \eqn{eq-cond11}, and \eqn{eq-cond2} hold
for some constant $c>0$ and a small constant $\veps>0$, where the magnitude of $\veps$ 
will be specified later (at the end of this subsection).

Let $\widehat \cx= \widehat \cx^r$ denote the set of states $\bx$ that satisfy 
\eqn{eq-cond1}-\eqn{eq-cond2}. For $\bx \in \widehat \cx$, we have:
\beql{eq-deriv1}
\left| \frac{\partial}{\partial x_{\bk}} L(\bx) \right| \le C_g,
\end{equation}
\beql{eq-deriv2}
0 \le \left|\frac{\partial^2}{\partial x_{\bk}^2} L(\bx) \right| \le C_g [r^{s-1} \log r]^{-1}  \le C_g r^{1-s},
\end{equation}
for  all sufficiently large $r$. 
To see inequality \eqref{eq-deriv1}, first recall from \eqref{eq-L-partial} that
$\frac{\partial}{\partial x_{\bk}} L(\bx) = \frac{1}{b}  \log \left(\frac{c_{\bk}x_{\bk}}{a}\right) = \frac{1}{b}  \log \left(c_{\bk}x_{\bk}\right)+1$ for any $\bk$, 
since $b = -\log a$. Also, since $\bx \in \widehat \cx$ satisfies condition \eqref{eq-cond1}, 
$x_{\bk}\leq z \leq 2$ for sufficiently large $r$. Thus, \eqref{eq-deriv1} holds for all sufficiently large $r$. 
To see inequality \eqref{eq-deriv2}, note that $\frac{\partial^2}{\partial x_{\bk}^2} L(\bx) = \frac{1}{bx_{\bk}}$, 
$b = -\log a = (1-p) \log r$ (since $a = r^{p-1}$), and $x_{\bk} \geq c r^{s-1}$, by condition \eqref{eq-cond2}.

Our process $\bx(t)$ is a pure jump process -- the jumps are caused by customer arrivals and departures.
 For the rest of this subsection, we consider an artificial version
of the process, which we now describe. 
In this artificial process, for which, with some abuse, we keep the same notation $\bx(\cdot)$, 
some of the jumps (i.e., transitions) are not allowed to occur.
Specifically, if $\xi$ is a point in time 
when an arrival or departure takes place, 
we call it a point of {\em potential transition}.
If this potential transition keeps the state within conditions \eqn{eq-cond1}-\eqn{eq-cond2}, we allow it to occur, so this becomes an
actual transition for the artificial process. If the potential transition were to take us to a state violating \eqn{eq-cond1}-\eqn{eq-cond2}, we do {\em not} allow this transition
to actually occur, and keep the state unchanged. 

We now 
introduce some notation related to
the artificial process. 
For $\bx \in \widehat \cx$, let $\ct(\bx) = \ct^r(\bx)$ denote the set of all potential transitions out of it. 
 Any potential transition $\alpha \in \ct(\bx)$ uniquely identifies the ``from"-state, which is $\bx$, 
and the ``to''-state, which we denote by $\bx_{\alpha}$. Thus, the sets $\ct(\bx)$ for different $\bx$ are non-intersecting.
Let $\ct=\ct^r=\cup_{\bx \in \widehat{\cx}} \ct(\bx)$ 
be the set of all possible potential transitions from the states $\bx \in \widehat \cx$.
Note that for any $r$, the cardinalities of $\widehat \cx$ and $\ct$ are both finite.
Denote by $\widehat \ct(\bx) \subseteq \ct(\bx)$ 
the set of all allowed potential 
transition out of $\bx$, i.e., those for which $\bx_\alpha \in \widehat \cx$.
Let $\widehat \ct = \cup_{\bx \in \widehat \cx} \widehat\ct(\bx) \subseteq \ct$ denote
the set of all potential transitions that are allowed. Finally, for 
a potential transition $\alpha$, 
denote by $\beta_{\alpha}$ its (infinitesimal) rate.

We can and do construct the artificial process on the following probability space. 
Let the initial state $\bx(0) \in \widehat\cx$ be fixed. For each 
potential transition type $\alpha \in \ct$, there is an independent unit-rate Poisson process
$\Pi_\alpha(\cdot)$. The counting process of potential $\alpha$-transitions is
$$
H_\alpha(t) = \Pi_\alpha \left(  \int_0^t  \beta_\alpha I\{ \alpha \in \ct(\bx(\xi))\} d\xi \right).
$$
Denote by $\tau_\alpha(t)$ the set of time points where the potential transitions $\alpha$,
and the associated jumps of $H_\alpha(\cdot)$, occur in $[0,t]$. Denote by $\tau(t) = \cup_{\alpha \in \ct} \tau_\alpha(t)$
and $\widehat \tau(t) = \cup_{\alpha \in \widehat\ct} \tau_\alpha(t)$
the sets of all potential transition points and allowed (i.e., actual) transition points, respectively, in $[0,t]$. 
The state $\bx(t)$ at time $t$ is the state after the last allowed potential transition. More formally,
if $\max \widehat \tau(t) \in \tau_\alpha(t)$, then $\bx(t)=\bx_\alpha$,
where $\bx_\alpha$ is the ``to''-state of potential transition $\alpha$;
if $\widehat \tau(t)$ is empty, then $\bx(t)=\bx(0)$.
It is easy to see that, w.p.1, this construction uniquely determines the realization of the process,
including all potential transition points.

Consider the following pure-jump process:
$$
F(t) = \sum_{\alpha\in\ct} \sum_{s\in \tau_\alpha(t)} \Delta_\alpha 
\equiv \sum_{\alpha\in\ct} \Delta_\alpha H_\alpha(t),
$$
where
$$
\Delta_\alpha = L(\bx_\alpha) - L(\bx), 
$$
$\bx$ and $\bx_\alpha$ the ``from''- and ``to''-states of potential transition $\alpha$.
Note that the above definition of $F(t)$ has the sum over {\em all} potential transitions, both allowed and not allowed.
We observe for future reference that, if the artificial process realization is such that all potential transitions
in $[0,t]$  are allowed, i.e., they are actual transitions, then  $F(t)= L(\bx(t)) - L(\bx(0))$.

Denote 
$$
AL(\bx) = \sum_{\alpha \in \ct(\bx)} \beta_{\alpha} \Delta_\alpha
\equiv \sum_{\alpha \in \ct(\bx)} \beta_{\alpha} [L(\bx_{\alpha})-L(\bx)].
$$
Let us remark that for a process where all potential transitions are allowed, $AL$ is 
the process generator, if $L$ is within its domain. 
For our artificial process $\bx(\cdot)$, however, $AL(\bx)$ is just a formally defined expression.


We have
\beql{eq-mart}
F(t) = M(t) + B(t),
\end{equation}
where, as we will see shortly,
$$
B(t) = \sum_{\alpha\in \ct}  \Delta_\alpha \int_0^t  \beta_\alpha I\{ \alpha \in \ct(\bx(s))\} ds = \int_0^t AL(\bx(t)) dt
$$
is the compensator of $F(t)$ and $M(t)= F(t)-B(t)$ is martingale. We will use the following additional notation:
\beql{eq-mart1}
B(t) = \int_0^t AL(\bx(t)) dt = \sum_{\alpha\in \ct} B_\alpha(t),
\end{equation}
where
\beql{eq-mart11}
B_\alpha(t) = \Delta_\alpha \bar H_\alpha(t), ~~~
\bar H_\alpha(t) \equiv \int_0^t  \beta_\alpha I\{ \alpha \in \ct(\bx(s))\} ds;
\end{equation}
and 
\beql{eq-mart22}
M(t) = \sum_{\alpha\in \ct} M_\alpha(t),
\end{equation}
where
\beql{eq-mart2}
M_\alpha(t) = \Delta_\alpha \left[ \Pi_\alpha \left( \bar H_\alpha(t) \right)
- \bar H_\alpha(t) \right].
\end{equation}
Note also that 
$$
H_{\alpha}(t) = \Pi_\alpha \left( \bar H_\alpha(t) \right). 
$$
We have (from, e.g., Lemmas 3.1 and 3.2 in \cite{PTW2007}), that
each $M_\alpha(t), ~t\ge 0,$
is a martingale w.r.t. the filtration describing the history of the process up to time $t$. 
(This itself is a martingale,
there is no need to localize, as in \cite{PTW2007},
because in our case the total
transition rate and all jump sizes are uniformly bounded.) 
Moreover, its predictable and optional quadratic variations 
are, respectively,
$$
\langle M_\alpha \rangle (t) = \Delta_\alpha^2 \bar H_\alpha(t), ~~~ \left[ M_\alpha \right](t) = \Delta_\alpha^2 H_\alpha(t).
$$ 

We see that $M(\cdot)$ is also a martingale.
Furthermore, since w.p.1 all potential transition time points are distinct, 
we have
$$
\left[ M \right](t) = \sum_{\alpha \in \ct} \left[ M_\alpha \right](t)  =
\sum_{\alpha \in \ct} \Delta_\alpha^2 H_\alpha(t).
$$
Using this fact, and the fact that each $M_\alpha(\cdot)$ is a martingale, the same argument 
as in the proof of Lemma 3.1 in \cite{PTW2007} shows that 
$$
\langle M \rangle (t) = \sum_{\alpha \in \ct} \Delta_\alpha^2 \bar H_\alpha(t).
$$

We have the following deterministic upper bound on the predictable quadratic variation 
(quadratic characteristic):
\beql{eq-mart3}
\langle M \rangle (t) \le [C_g r] \left(\frac{C_g}{r}\right)^2 t, 
\end{equation}
where $C_g  r$ is a uniform upper bound on the total rate of potential transitions from any state, 
$C_g/r$ is a uniform upper bound on $\Delta_\alpha$. 

By Doob's inequality (see, e.g., Theorem 1.9.1 in \cite{Liptser_Shiryaev}), for any $\delta > 0$, 
\beql{eq-doob}
\pr\left\{\max_{0\le \xi \le t} |M(\xi)| \ge \delta\right\} \le \frac{\E [\langle M \rangle (t)]}{\delta^2}.
\end{equation}
In particular, for $\delta= \eta r^{3s-3-\veps}$ with 
some $\eta \in (0, 1/4)$, 
and $t=T=r^{s-1}$, we have
\beql{eq-doob2}
\pr\left\{\max_{0\le \xi \le T} |M(\xi)| \ge \eta r^{3s-3-\veps}\right\} \le \frac{[C_g \lambda r] \left(C_g/r\right)^2 r^{s-1}}{[\eta r^{3s-3-\veps}]^2} \le C_g  \frac{r^{4-5s+2\veps}}{\eta^2}.
\end{equation}

Our next goal is to estimate $|AL(\bx) - \Xi(\bx)|$, where $\Xi(\bx)$ is defined in \eqn{eq-L-deriv} and \eqn{eq-xi-is-derivative}. 
Denote by $\bar AL(\bx)$ the ``linear component'' of $AL(\bx)$, i.e. the $AL(\bx)$ computed with 
function $L$ replaced by its linearization at $\bx$. 
Specifically, 
\[
\bar AL(\bx) = \sum_{\alpha \in \ct(\bx)} \beta_{\alpha} (\bx_{\alpha} - \bx) \cdot \nabla L(\bx),
\]
where $\nabla L(\bx)$ is the gradient of $L$ at $\bx$. 
Since the difference between $L(\bx_{\alpha}) - L(\bx)$ and $(\bx_{\alpha} - \bx) \cdot \nabla L(\bx)$ 
is second order, using \eqn{eq-deriv2}, 
we have
\beql{eq-a-to-bar}
| AL(\bx) - \bar AL(\bx)| \le C_g r \cdot \frac{r^{1-s}}{\log r} \cdot \left(\frac{1}{r}\right)^2 < r^{-s},
\end{equation}
where the second inequality is true for $r>\exp (C_g)$.

Let us now consider 
the relation between $\bar AL(\bx)$ and $\Xi(\bx)$. 
Expression for $\bar AL(\bx)$ can be rewritten as 
\begin{equation}\label{eq-bar-AL}
\bar AL(\bx) = \sum_{(\bk,i) \in \cm} \left[\frac{\partial}{\partial x_{\bk}} L^{(a)}(\bx) - \frac{\partial}{\partial x_{\bk-\be_i}} L^{(a)}(\bx)\right]
\big[v_{\bk,i} - \tilde w_{\bk,i}\big],
\end{equation}
where $v_{\bk, i}$ are obtained from making the following modifications to $\tilde v_{\bk, i}$ 
in \eqn{eq-xi-is-derivative} and \eqn{eq-tilde-v}.
For each $i$, replace $\tilde{\lambda}_i$ by $\lambda_i$ (since $\lambda_i$ are the {\em actual} fluid-scaled arrival rates),
which changes the expression of $\tilde v_{\bk,i}$ in \eqn{eq-tilde-v} accordingly. 
Next, $x_{(i)}$ is defined as in \eqn{eq-convention-x0}, but $x_{\bZero} = a$ 
is replaced by the actual fluid-scaled number of zero-servers under GRAND($Z^p$), i.e., $x_{\bZero} = Z^p/r$, 
which further changes the value of $\tilde v_{\bk,i}$ in \eqn{eq-tilde-v}. 
We keep the convention $(\partial/\partial x_{\bZero}) L(\bx)=0$, for any $\bx$. 
This completes the description of the modifications. 
We see that {\em $\bar AL(\bx)$ is exactly equal to $\Xi(\bx)$ with each $\tilde v_{\bk,i}$ replaced 
by the corresponding $v_{\bk,i}$.}

We can now estimate $|\bar AL(\bx) - \Xi(\bx)|$. 
We know from \eqn{eq-cond11} that 
$$
\left| \tilde \lambda_i - \lambda_i \right| \le r^{-1/2 + \veps}.
$$
Furthermore, from \eqn{eq-cond1}, we have 
$$
\left|\frac{Z^p}{r^p} -1\right| \le 2 r^{-1/2+\veps}.
$$
Therefore, the modified (and {\em actual}) $x_{(i)}$ with $x_{\bZero}=Z^p/r$, and the value of $x_{(i)}$ in \eqn{eq-convention-x0}, 
assuming $x_{\bZero} = a=r^{p-1}$, are different by at most $C_g r^{p-1} r^{-1/2+\veps}$.
Summarizing these observations, we conclude that vectors $\tilde \bv=\{\tilde v_{\bk,i}\}$ and 
$\bv=\{v_{\bk,i}\}$ must be close, specifically
\beql{eq-cond11111}
\| \tilde \bv - \bv \| \le C_g r^{-1/2+\veps}.
\end{equation}

Using the expressions \eqn{eq-xi-is-derivative} for $\Xi(\bx)$ 
and \eqn{eq-bar-AL} for $\bar AL(\bx)$, we have
\begin{eqnarray}
|\Xi(\bx) - \bar AL(\bx)| &=& \left| \sum_{(\bk,i) \in \cm} \left[\frac{\partial}{\partial x_{\bk}} L^{(a)}(\bx) - \frac{\partial}{\partial x_{\bk-\be_i}} L^{(a)}(\bx)\right]
\big[v_{\bk,i} - \tilde v_{\bk,i}\big] \right| \\
&\leq & C_g\|\nabla L^{(a)}(\bx)\| \|\tilde{\bv}-\bv\| \\
&\leq & C_g r^{-1/2+\veps}, \label{eq-tilde-to-bar}
\end{eqnarray}
where the last inequality follows from \eqn{eq-deriv1} and \eqn{eq-cond11111}.

Summing \eqn{eq-a-to-bar} and \eqn{eq-tilde-to-bar}, 
recalling that $s > 7/8$, 
we obtain the estimate
$$
|AL(\bx) - \Xi(\bx)|\le |AL(\bx) - \bar AL(\bx)| + |\bar AL(\bx) - \Xi(\bx)|\le C_g r^{-1/2+\veps}.  
$$
Then,
\beql{eq-drift-error}
 \int_0^T |AL(\bx(t)) - \Xi(\bx(t))| dt  \le C_g r^{s-3/2+\veps}.
\end{equation}

Now we specify the choice of the constant 
$\veps>0$: it has to be small
enough to satisfy
\begin{equation}\label{eq-veps-veps2}
3s-3-\veps > s-3/2+\veps ~~(\mbox{or } ~\veps < s - 3/4), \mbox{ and }
\end{equation}
\begin{equation}\label{eq-veps2}
(-3s + 3 + \veps) + (4-5s + 2\veps) < 0 ~~(\mbox{or } ~\veps < (8s-7)/3). 
\end{equation}
Since $s > 7/8$, we have both $s - 3/4 > 0$ and $(8s-7)/3 > 0$. 
Thus, conditions \eqn{eq-veps-veps2} and \eqn{eq-veps2} are well-defined. 
Now, compare $\eta r^{3s-3-\veps}$, the term on the RHS of the event in bracket in \eqn{eq-doob2}, 
and $C_g r^{s-3/2+\veps}$, the term on the RHS of the error estimate in \eqn{eq-drift-error}. 
By condition \eqn{eq-veps-veps2}, we have $\eta r^{3s-3-\veps}> C_g r^{s-3/2+\veps}$ for all sufficiently large $r$.
Then, from \eqn{eq-mart}, \eqn{eq-doob2} and \eqn{eq-drift-error}, we obtain
\beql{eq-mart222}
\pr\left\{\max_{0\le t \le T} \left|F(t) - \int_0^t \Xi(\bx(t)) dt\right| \ge 2 \eta r^{3s-3-\veps}\right\}  \le C_g  r^{4-5s+2\veps}/\eta^2.
\end{equation}
Let us remark that to obtain \eqn{eq-mart222}, we only need 
$\veps$ to be positive and satisfy condition \eqn{eq-veps-veps2}. 
The condition \eqn{eq-veps2} is needed for the rest of the proof of Theorem \ref{thm-zp} 
in the next subsection.

We conclude this subsection with the estimates of $\Xi(\bx)$ and $\int_0^T \Xi(\bx(\xi)) d\xi$.
 By the remark after \eqn{eq-L-deriv}, we have $\Xi(\bx) \le 0$. 
Furthermore,
let any $\delta_1>0$ be fixed. Then,
by \eqn{eq-a35} and \eqn{eq-xi-is-derivative}, 
there exists $C=C(\delta_1)>0$ such that the following holds: 
if $|\chi_{\bk,\bk',i}| \ge \delta_1$ for at least one
pair of edges, 
then 
$$
\Xi(\bx) \le - C (1/\log r) [r^{s-1}]^2 < - r^{2s-2-\veps},
$$
for all sufficiently large $r$.
We obtain that, always,
\beql{eq-conc1}
\int_0^t \Xi(\bx(\xi)) d\xi \le 0, ~~ \forall  t \le T,
\end{equation}
and if $|\chi_{\bk,\bk',i}| \ge \delta_1>0$ for at least one pair of edges
in the entire interval $[0,T]$, then
\beql{eq-conc2}
\int_0^T \Xi(\bx(t)) dt  \le - C (1/\log r) [r^{s-1}]^2 r^{s-1} < - r^{3s-3-\veps},
\end{equation}
for all sufficiently large $r$.

\subsection{Completing the Proof of Theorem \ref{thm-zp}}
\label{sec-complete-thm-zp} 

We start with a comment about the probability space constructions.
The construction that we will use in this section does {\em not}
have to be same as those used in Subsection~\ref{sec-lyap-drift} or Appendices~\ref{proof-property-1}-\ref{proof-property-2}.
The probability spaces used in different parts of the paper can be different,
as long as we use them to obtain statements in the form of probability estimates, which is the case here.
Obviously, the probability estimates are valid regardless of the probability space construction used to derive them.


Let the constants 
$\veps>0$, $\eta > 0$ and $c>0$ be those chosen above in Section~\ref{sec-lyap-drift}. 
More specifically, 
$\veps > 0$ satisfies conditions \eqn{eq-veps-veps2} and \eqn{eq-veps2}, $\eta \in (0, 1/4)$, 
and $c>0$ is such that Lemma \ref{lem-p2} holds. 
Fix $C' >0$. (The exact choice of $C'$ will be specified later.) 
Consider the stationary version of the original process on $C' r^{-3s+3+\veps}$ consecutive $r^{s-1}$-long intervals,
starting time $0$. (I.e., the distribution at initial time $0$ the stationary distribution of the process.)
These $r^{s-1}$-long intervals we will call {\em subintervals}.
The proof of Theorem \ref{thm-zp} will be completed if we prove the following 

{\em Assertion.} For any fixed $\gamma>0$ and any fixed subsequence of $r$, there exists a further subsequence of $r$,
along which, w.p.1, for all sufficiently large $r$ the following properties hold for the original process:\\
(a) properties \eqn{eq-cond1}-\eqn{eq-cond2} hold at all times within all $C' r^{-3s+3+\veps}$ subintervals;\\
(b) at the end of the last subinterval, $\sum_{\bk \in \ck} x_{\bk} - L^* < \gamma$.

To prove this assertion, fix any $\gamma>0$ and any subsequence of $r$. 
In addition to the original process, consider the artificial process, coupled to it as follows.
The initial state of the artificial process is equal (w.p.1) to that of the original one.
If the initial state (of both processes) satisfies \eqn{eq-cond1}-\eqn{eq-cond2}, then
the artificial process evolves as it is defined, and it is coupled to be equal to the original process until the first time 
when \eqn{eq-cond1}-\eqn{eq-cond2} is violated (for the original process).
By convention, if the initial state (of both processes) violates \eqn{eq-cond1}-\eqn{eq-cond2}, then
the artificial process is ``frozen'', i.e. remains equal to the initial state at all times.

Let us focus on the artificial process, and
apply the estimate \eqn{eq-mart222} to each subinterval. 
(When \eqn{eq-mart222} is applied to a given subinterval, the time is shifted so that $t=0$ is the beginning of that subinterval.) Then, by \eqn{eq-mart222} and a simple union bound, 
we see that the probability that the event in the brackets in the LHS of \eqn{eq-mart222} holds for at least one
of the subintervals is upper bounded by
$$
C_g  C' r^{-3s+3+\veps} r^{4-5s+2\veps}/\eta^2 = C_g  C' r^{7-8s+3\veps}/\eta^2.
$$
Since $\veps$ satisfies \eqn{eq-veps2}, $C_g  C' r^{7-8s+3\veps}/\eta^2 \to 0$ as $r \to \infty$. 
Consider a further subsequence of $r$, increasing fast enough, e.g. $r=r(n) \ge e^n$,
so that the sum of these probabilities is finite.
 Then, for the artificial process,
by Borel-Cantelli lemma, 
w.p.1, for all large $r$, the condition
\begin{equation}\label{eq-F-AL-diff}
\max_{0\le t \le T} \left|F(t) - \int_0^t \Xi(\bx(t)) dt\right| < 2 \eta r^{3s-3-\veps}
\end{equation}
holds simultaneously for all $C' r^{-3s+3+\veps}$ subintervals; 
furthermore, we have \eqn{eq-conc1}-\eqn{eq-conc2} for all these subintervals simultaneously.

By Lemmas~\ref{lem-p1}  and \ref{lem-p2}, and Borel-Cantelli lemma, we can choose a further subsequence of $r$, along which
the original process is such that w.p.1, for all large $r$, conditions \eqn{eq-cond1}-\eqn{eq-cond2} 
hold on all $C' r^{-3s+3+\veps}$ subintervals, and therefore the 
artificial process and the original process coincide.
Therefore, along this subsequence, 
w.p.1, for all large $r$,
$F(t)$ (which we defined for the artificial process) is
equal to the increment of $L$, $F(t) = L(\bx(t)) -L(\bx(0))$, for the original process,
for all subintervals simultaneously, and we also have
\eqn{eq-conc1}-\eqn{eq-conc2} for all subintervals simultaneously. 

Then, w.p.1, for all large $r$, the following occurs for the original process.
If at the beginning of a subinterval, $\Delta L \equiv L - L^* \ge \gamma$,
then either condition $\Delta L \le \gamma$ is ``hit'' within the subinterval, or,
at the end of the subinterval $\Delta L$ is smaller by at least 
$r^{3s-3-\veps}/2 > 0$.
This follows from \eqn{eq-conc2}, \eqn{eq-F-AL-diff}, condition $\eta < 1/4$,
and the fact that (by Lemma~\ref{lem-close-to-opt2}) 
$\Delta L \ge \gamma$ implies that
$|\chi_{\bk,\bk',i}| \ge \delta_1 >0$
for some $\delta_1=\delta_1(\gamma)$ that (just like $\gamma$) 
 does not depend on $r$.
In addition, if at the beginning of a subinterval or any other point in it $\Delta L \le \gamma$,
then at the end of the subinterval $\Delta L < 2\gamma$.

Note that w.p.1, for all large $r$, at the beginning of the first subinterval,
$L(\bx(0)) - L^* \le C''$ for some constant $C''>0$ independent of $r$. (Recall that $L(\bx) = L^{(r^{p-1})}(\bx)$,
and it converges to $\sum_{\bk\in\ck} x_{\bk}$ uniformly on compact sets.) 
We now specify the choice of $C'$: it is any constant satisfying $C' > 2 C''$.
Given this choice, we see that (w.p.1, for all large $r$) condition $\Delta L \le \gamma$ is 
in fact ``hit'' within one of the subintervals; this in turn implies that 
 $\Delta L < 2\gamma$ must hold at the end of the last subinterval.
 Recall again that $L(\bx) = L^{(r^{p-1})}(\bx)$ converges to $\sum_{\bk\in\ck} x_{\bk}$ uniformly on compact sets.
 We finally obtain that, w.p.1, for all large $r$, $\sum_{\bk \in \ck} x_{\bk} - L^* < 3\gamma$
 at the end of the last subinterval, which (by rechoosing $\gamma$) completes the proof of the assertion,
 and of Theorem~\ref{thm-zp}. $\Box$
 
\section{Discussion}
\label{sec-discuss}

This paper continues the line of work, originated in \cite{StZh2013}, which shows that, surprisingly, extremely simple dynamic placement (packing) algorithms,
such as GRAND, can be asymptotically optimal. We show that GRAND($Z^p$) algorithm, which is as simple as GRAND($aZ$) in \cite{StZh2013},
is asymptotically optimal in a stronger sense than  GRAND($aZ$). The analysis of GRAND($Z^p$) is substantially more involved technically than that of 
 GRAND($aZ$), because it cannot be reduced to the analysis of fluid limits and/or local fluid limits. 

We believe that 
our main result, Theorem~\ref{thm-zp}, holds for any $p \in (0, 1)$, without the additional condition \eqn{eq-cond-p} 
that requires $p$ to be {\em sufficiently close} to $1$.
This additional condition is needed for our technical approach to work, and 
 is probably just technical. Removing or relaxing the additional condition on $p$ 
is interesting and important from both practical and methodological point of view, and
may be a subject of future work.

\newpage
\appendix
\begin{center}
{\Large\textbf{Appendix}}
\end{center}

\section{Proof of Lemma~\ref{lem-p1}}
\label{proof-property-1}
We start with a general outline.
First, in Subsection \ref{p1-prelim}, we provide some useful concentration inequalities 
of Poisson random variables, together with some notation and terminology that will be useful 
for both the proofs of Lemmas~\ref{lem-p1} and \ref{lem-p2}. 
The actual proof of Lemma~\ref{lem-p1} is in Subsection \ref{proof-p1}; 
its outline is as follows. First, in steady state, property \eqref{eq-cond11} 
holds with high probability (w.h.p) for each $t$, since $Y_i(t) = \sum_{\bk} k_i X_{\bk}(t)$ is a Poisson 
random variable with mean $\lambda_i r$. 
To show that w.h.p. property \eqref{eq-cond11} holds uniformly over a polynomially long 
interval $[0, r^{\alpha}]$, we divide the interval $[0, r^{\alpha}]$ into shorter sub-intervals 
of length $r^{-1/2}$, and show that w.h.p. property \eqref{eq-cond11} holds uniformly over each 
sub-interval. In fact, the probability of \eqref{eq-cond11} {\em not} holding uniformly over a sub-interval decays fast enough with $r$,
so that we can use a union bound to claim that w.h.p. \eqref{eq-cond11} holds for all sub-intervals simultaneously.
To obtain a single sub-interval bound, we first observe that at the beginning of a sub-interval, 
property \eqref{eq-cond11} holds w.h.p. by stationarity. 
Then, using the fact that $Y_i(\cdot)$ follows the evolution of an $M/M/\infty$ queue, 
we can upper bound the increase of $Y_i$ due to arrivals, 
and lower bound the decrease of $Y_i$ due to departures, 
over a subinterval. 
We have chosen the length $r^{-1/2}$ of each sub-interval small enough 
so that increase and decrease in $Y_i$ are small as well, 
and (by possibly rechoosing $\veps$) property \eqref{eq-cond11} 
would hold uniformly in a sub-interval, w.h.p.

\subsection{Preliminaries}\label{p1-prelim}
In this subsection, we recall some basic concentration inequalities 
for Poisson random variables. We then describe 
a convention that we will follow in this section and Appendix \ref{proof-property-2}.

\begin{prop}[Theorem 5.4, \cite{MU2005}]\label{prop:pois-conc}
Let $V$ be a Poisson random variable with mean $\nu$. 
If $x > \nu$, then 
\begin{equation}\label{eq:pois-upper0}
\pr(V\geq x) \leq \frac{e^{-\nu}(e\nu)^x}{x^x};
\end{equation}
and if $x < \nu$, then
\begin{equation}\label{eq:pois-lower0}
\pr(V \leq x) \leq \frac{e^{-\nu}(e\nu)^x}{x^x}.
\end{equation}
\end{prop}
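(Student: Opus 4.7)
The plan is the classical Chernoff-bound argument specialized to the Poisson distribution. Both inequalities follow from applying Markov's inequality to $e^{tV}$ and then optimizing the free parameter $t$ over an appropriate half-line. The key input is the Poisson moment generating function
$$
\E[e^{tV}] = \exp\bigl(\nu(e^t - 1)\bigr), \quad t \in \R,
$$
which is elementary to compute from the series representation of a Poisson law.

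For the upper tail \eqref{eq:pois-upper0}, I would fix $t > 0$ and write
$$
\pr(V \ge x) = \pr\bigl(e^{tV} \ge e^{tx}\bigr) \le e^{-tx}\,\E[e^{tV}] = \exp\bigl(-tx + \nu(e^t - 1)\bigr).
$$
Minimizing the exponent in $t$ by setting its derivative $-x + \nu e^t$ to zero yields the critical point $t^{\ast} = \log(x/\nu)$, which is strictly positive by the hypothesis $x > \nu$. Substituting $t^{\ast}$ gives the exponent $-x\log(x/\nu) + x - \nu$, which rearranges to $\log\bigl(e^{-\nu}(e\nu)^x / x^x\bigr)$, yielding the stated bound.

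For the lower tail \eqref{eq:pois-lower0}, the argument is essentially symmetric but uses $t < 0$. For $x < \nu$, Markov's inequality applied to $e^{tV}$ with $t < 0$ gives
$$
\pr(V \le x) = \pr\bigl(e^{tV} \ge e^{tx}\bigr) \le e^{-tx}\,\E[e^{tV}],
$$
and the optimization produces the same critical point $t^{\ast} = \log(x/\nu)$, which is now negative precisely because $x < \nu$. The optimized bound has exactly the same algebraic form, yielding \eqref{eq:pois-lower0}.

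Since the statement is a textbook result (Theorem 5.4 of \cite{MU2005}), there is no genuine obstacle in the proof. The only point of care is verifying that the unconstrained optimizer $t^{\ast} = \log(x/\nu)$ lies in the correct half-line for the Chernoff bound to be valid; this is guaranteed exactly by the respective hypotheses $x > \nu$ and $x < \nu$. No concentration subtleties, no truncation, and no probabilistic machinery beyond Markov's inequality and the Poisson MGF are required.
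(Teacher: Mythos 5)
Your proof is correct. The paper does not prove this proposition at all — it simply cites it as Theorem~5.4 of \cite{MU2005} — and your Chernoff-bound derivation (Markov on $e^{tV}$, optimize $t$, check that $t^{\ast}=\log(x/\nu)$ falls in the right half-line under each hypothesis) is precisely the standard argument given in that textbook.
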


A consequence of Proposition \ref{prop:pois-conc} 
is the following concentration bounds, 
which will be used extensively for the proofs of Lemmas \ref{lem-p1} and \ref{lem-p2}.
\begin{cor}\label{cor:pois-conc}
Let $V$ be a Poisson random variable with mean $\nu$, and let $w \in [0, \nu]$.
Then, 
\begin{equation}\label{eq:pois-upper}
\pr(V \geq \nu + w) \leq e^{-\frac{w^2}{4\nu}}, \mbox{ and }
\end{equation}
\begin{equation}\label{eq:pois-lower}
\pr(V \leq \nu - w) \leq e^{-\frac{w^2}{4\nu}}.
\end{equation}
\end{cor}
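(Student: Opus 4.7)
The plan is to derive both inequalities directly from Proposition~\ref{prop:pois-conc} by substituting $x = \nu + w$ into \eqref{eq:pois-upper0} for the upper tail and $x = \nu - w$ into \eqref{eq:pois-lower0} for the lower tail, and then reducing each bound to an elementary single-variable inequality in $u = w/\nu \in [0, 1]$ that can be verified by differentiation.

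For the upper tail, taking logarithms after the substitution yields
\[
\ln \pr(V \geq \nu + w) \leq -\nu + (\nu + w)\ln(e\nu) - (\nu + w)\ln(\nu + w) = w - (\nu + w)\ln(1 + w/\nu),
\]
so, dividing by $\nu$, the desired bound \eqref{eq:pois-upper} is equivalent to the claim that $(1+u)\ln(1+u) - u \geq u^2/4$ on $[0, 1]$. I would verify this by setting $f(u) \doteq (1+u)\ln(1+u) - u - u^2/4$ and observing that $f(0) = 0$, $f'(u) = \ln(1+u) - u/2$ with $f'(0) = 0$, and $f''(u) = 1/(1+u) - 1/2 \geq 0$ on $[0,1]$; hence $f' \geq 0$ and therefore $f \geq 0$ throughout $[0, 1]$.

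For the lower tail, the analogous computation gives $\ln \pr(V \leq \nu - w) \leq -w - (\nu - w)\ln(1 - w/\nu)$, and so the bound \eqref{eq:pois-lower} reduces to $u + (1-u)\ln(1-u) \geq u^2/4$ on $[0, 1)$. Setting $g(u) \doteq u + (1-u)\ln(1-u) - u^2/4$, one checks $g(0) = 0$, $g'(u) = -\ln(1-u) - u/2$ with $g'(0) = 0$, and $g''(u) = 1/(1-u) - 1/2 \geq 1/2$ on $[0, 1)$, so $g \geq 0$ on $[0, 1)$; the boundary case $u = 1$ follows by continuity since $(1-u)\ln(1-u) \to 0$ as $u \uparrow 1$.

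There is no substantial obstacle here, as the argument is a standard Chernoff-type manipulation for Poisson tails. The only mild point worth flagging is that the hypothesis $w \in [0, \nu]$ forces the elementary inequalities to hold on the full interval $u \in [0, 1]$, rather than only near the origin; this is precisely the reason the constant $1/4$ appears in the exponent instead of the sharper $1/2$ that a second-order Taylor expansion at $u = 0$ alone would suggest.
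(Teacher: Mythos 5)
Your proof is correct and takes essentially the same route as the paper: substitute $x = \nu \pm w$ into Proposition~\ref{prop:pois-conc}, reduce to an elementary inequality, and verify it by differentiation from the common value at $w=0$ (the paper bounds the first derivative using $\log\frac{\nu}{\nu+w}\leq -\frac{w}{\nu+w}\leq -\frac{w}{2\nu}$, whereas you differentiate once more and check $f''\ge 0$ — a minor variation, not a different method).
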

\begin{proof}[Proof sketch.]
Let $w \in [0, \nu]$. Then, by \eqref{eq:pois-upper0}, 
\[
\pr(V \geq \nu + w) \leq \frac{e^{-\nu} (e\nu)^{\nu+w}}{(\nu+w)^{\nu+w}} 
= \exp\left[w + (\nu + w)\log \frac{\nu}{\nu+w}\right]. 
\]
To establish \eqref{eq:pois-upper}, it suffices to prove that for all $w \in [0, \nu]$, 
\[
w + (\nu + w)\log \frac{\nu}{\nu+w} \leq -\frac{w^2}{4\nu}.
\]
The inequality can be established by observing that when $w = 0$, LHS = RHS = 0, 
and that for $w \in [0, \nu]$, 
\[
\frac{d}{dw} \left[w + (\nu + w)\log \frac{\nu}{\nu+w}\right] = \log \frac{\nu}{\nu + w} \leq  - \frac{w}{\nu+w} \leq - \frac{w}{2\nu} = \frac{d}{dw}\left[-\frac{w^2}{4\nu}\right].
\]
\eqref{eq:pois-lower} can be established in a similar way. We omit further details.
\end{proof}

To prove Lemmas \ref{lem-p1} and \ref{lem-p2}, we will often consider 
probability bounds for a sequence of events, indexed by $r$. 
To simplify exposition, we use the following notation. 
For a sequence of constants $\left\{C^{(r)}\right\}$ with $C^{(r)} \in [0, 1]$ 
for each $r$, 
we write 
\begin{equation}\label{eq:poly-r}
C^{(r)} \leq e^{-\text{poly}(r)} \left(\mbox{respectively, }C^{(r)} \geq 1 - e^{-\text{poly}(r)}\right),
\end{equation}
if there exists a positive constant $\gamma$ such that 
for all sufficiently large $r$, 
\[
C^{(r)} \leq e^{-r^{\gamma}} \left(\mbox{respectively, }C^{(r)} \geq 1 - e^{-r^{\gamma}}\right).
\]
It is useful to think of $C^{(r)}$ as probabilities of events indexed by $r$.
Note that if $\left\{C^{(r)}_1\right\}_r$ and $\left\{C^{(r)}_2\right\}_r$ are two sequences with 
$C^{(r)}_1 \leq e^{-\text{poly}(r)} \mbox{ and } C^{(r)}_2 \leq e^{-\text{poly}(r)}$, 
then we also have 
\[
C^{(r)}_1 + C^{(r)}_2 \leq e^{-\text{poly}(r)}. 
\]
Similarly, if $C^{(r)}_1 \geq 1 - e^{-\text{poly}(r)} \mbox{ and } C^{(r)}_2 \geq 1 - e^{-\text{poly}(r)}$, 
then 
\[
C^{(r)}_1 + C^{(r)}_2 \geq 1 - e^{-\text{poly}(r)}. 
\]
Finally, we also often use the expression ``with probability $1 - e^{-\text{poly}(r)}$'' 
to mean that the probability is at least $1 - e^{-r^{\gamma}}$, 
for some $\gamma > 0$ and for all sufficiently large $r$.

\subsection{Proof of Lemma \ref{lem-p1}} \label{proof-p1}
Our general strategy for establishing Lemma \ref{lem-p1} 
is to divide the interval $[0, r^{\alpha}]$ into 
shorter sub-intervals of length $r^{-1/2}$, 
and show that \eqn{eq-cond11} holds with high probability 
over each sub-interval. 
More specifically, let us define events
\begin{equation}\label{eq:event-j}
E^r_{j,i} = \left\{\sup_{t \in \left[(j-1)r^{-\frac{1}{2}}, j r^{-\frac{1}{2}}\right]} |Y^r_i(t) - \rho_i r| \geq \frac{r^{1/2 + \veps}}{I}\right\},
\end{equation}
for $i \in \ci$, $j \in \{ 1, 2, \cdots, r^{\alpha +\frac{1}{2}}\}$\footnote{We treat $r^{\alpha+\frac{1}{2}}$ as if it were guaranteed to be an integer. 
Similarly, in the sequel, whenever $Cr^{\beta}$ is used as an index for some constants $\beta > 0$ and $C > 0$, 
we treat it as an integer. 
Rounding them up or down to a nearest integer would overburden our notation, but would not affect our order-of-magnitude estimates.}, and for each $r$. 
Note that because for each $r$, the system is in the stationary regime, 
we have that for any $i \in \ci$, 
\begin{equation}\label{eq:stationary-equal-events}
\pr\left(E^r_{1,i}\right) = \pr\left(E^r_{2,i}\right) = \cdots = \pr\left(E^r_{r^{\alpha + 1/2},i}\right).
\end{equation}
Thus, if we can show that
\begin{equation}\label{eq:e-poly-r-sub-int}
\pr\left(E^r_{1,i} \right) \leq e^{-\text{poly}(r)}, \forall i \in \ci,
\end{equation}
then by \eqref{eq:stationary-equal-events} and \eqref{eq:e-poly-r-sub-int}, 
\[
\pr\left(\cup_{1\leq j \leq r^{\alpha+1/2}, i \in \ci} E^r_{j, i}\right) \leq e^{-\text{poly}(r)} \to 0 \quad \text{as} \quad r \to \infty, 
\]
which establishes Lemma \ref{lem-p1} immediately.

It is now left to show that $\pr\left(E^r_{1, i}\right) \leq e^{-\text{poly}(r)}$ for each $i \in \ci$.

\begin{lem}\label{lem:p1-t}
Let $i \in \ci$, and consider the system indexed by $r$ in the stationary regime. Then, for sufficiently large $r$, 
\begin{equation}\label{eq:conc-t}
\pr\left(E^r_{1,i}\right) = \pr\left(\sup_{t \in [0, r^{-1/2}]} |Y^r_i(t) - \rho_i r| \geq \frac{r^{1/2 + \veps}}{I}\right)
\leq e^{-\text{poly}(r)}.
\end{equation}
\end{lem}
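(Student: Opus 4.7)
The plan is to split the deviation into two pieces, an initial deviation and a pathwise deviation from the initial value, and show each is small with probability $1 - e^{-\text{poly}(r)}$. The key observation is that $\{Y^r_i(t), t \ge 0\}$ is an $M/M/\infty$ queue with arrival rate $\lambda_i r$ and service rate $\mu_i$, \emph{irrespective} of the placement algorithm; in the stationary regime, $Y^r_i(0)$ is Poisson with mean $\rho_i r$. Using
\[
\sup_{t \in [0, r^{-1/2}]} |Y^r_i(t) - \rho_i r| \le |Y^r_i(0) - \rho_i r| + \sup_{t \in [0, r^{-1/2}]} |Y^r_i(t) - Y^r_i(0)|,
\]
it suffices to bound each term by $r^{1/2+\veps}/(2I)$ with probability $1 - e^{-\text{poly}(r)}$, and to combine via a union bound.

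For the first term, I would directly invoke Corollary \ref{cor:pois-conc} with $\nu = \rho_i r$ and $w = r^{1/2+\veps}/(2I)$ (valid since $w \le \nu$ for large $r$), which yields a tail of order $2\exp(-r^{2\veps}/(16 I^2 \rho_i))$, clearly of the form $e^{-\text{poly}(r)}$.

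For the pathwise term, note that if $A(t)$ and $D(t)$ denote the number of type-$i$ arrivals and departures in $[0,t]$ respectively, then $Y^r_i(t) - Y^r_i(0) = A(t) - D(t)$, so the sup is bounded by $A(r^{-1/2}) + D(r^{-1/2})$. Since arrivals form a rate-$\lambda_i r$ Poisson process independent of everything else, $A(r^{-1/2})$ is Poisson with mean $\lambda_i r^{1/2}$; applying Corollary \ref{cor:pois-conc} with threshold $r^{1/2+\veps}/(4I)$, which exceeds the mean by a factor of order $r^{\veps}$, gives a deviation probability of the required $e^{-\text{poly}(r)}$ form (here one should use the sharper bound \eqref{eq:pois-upper0} directly, since the threshold is not in the $w \le \nu$ regime of the corollary). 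The main (mild) obstacle is bounding $D(r^{-1/2})$, because the departure rate $\mu_i Y^r_i(t)$ depends on the random state. I would resolve this by a stopping-time coupling: let $\tau = \inf\{t: Y^r_i(t) \ge 2\rho_i r\}$. Up to time $\tau$, the departure rate is at most $2\lambda_i r$, so $D(\tau \wedge r^{-1/2})$ is stochastically dominated by a Poisson random variable of mean $2\lambda_i r^{1/2}$ and obeys the same concentration estimate as $A$.

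To close the argument I would verify that, on the high-probability event that $|Y^r_i(0) - \rho_i r| \le r^{1/2+\veps}/(2I)$ and $A(r^{-1/2}) \le r^{1/2+\veps}/(4I)$, we have $Y^r_i(t) \le Y^r_i(0) + A(r^{-1/2}) \le \rho_i r + r^{1/2+\veps}(1/(2I) + 1/(4I)) < 2\rho_i r$ throughout $[0, r^{-1/2}]$ for all sufficiently large $r$, so $\tau > r^{-1/2}$ on this event and the Poisson domination of $D$ is valid up to $r^{-1/2}$. A final union bound over the three bad events (initial deviation, arrival excess, departure excess) yields the claimed $e^{-\text{poly}(r)}$ estimate. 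The only subtlety is keeping the stopping-time bootstrap consistent with the events already controlled; everything else is routine Poisson concentration.
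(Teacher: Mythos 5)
Your proposal is correct and follows essentially the same route as the paper: both exploit the $M/M/\infty$ structure of $Y^r_i(\cdot)$ to reduce to Poisson concentration, bound arrivals directly as a rate-$\lambda_i r$ Poisson process, and then bootstrap the upper bound $Y^r_i(t) < 2\rho_i r$ (from the initial-state and arrival bounds) to control the departure count. The paper phrases the bootstrap via intersection of events $F^r$ and $G^r$ rather than your stopping time $\tau$, and decomposes into sup/inf rather than initial plus pathwise deviation, but these are cosmetic; the content is identical.
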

\begin{proof} For notational convenience, we drop the subscript $i$. 
The proof of Lemma \ref{lem:p1-t} consists of establishing 
(a) a probability tail bound of $|Y^r(0) - \rho r|$; and
(b) a probability tail bound of $\sup_{t \in [0, r^{-1/2}]} \left(Y^r(t) - \rho r\right)$; 
and (c) a probability tail bound of $\inf_{t \in [0, r^{-1/2}]} \left(Y^r(t) - \rho r\right)$.

(a) We first show that 
\begin{equation}\label{eq:conc-0}
\pr\left(|Y^r(0) - \rho r| \geq r^{1/2 + \veps/3}\right) \leq e^{-\text{poly}(r)}.
\end{equation}
Since $Y^r(0)$ is a Poisson random variable with mean $\rho r$, 
we apply the concentration bounds \eqn{eq:pois-upper} and \eqn{eq:pois-lower}. 
First by \eqn{eq:pois-upper}, 
we have that for sufficiently large $r$, $r^{1/2 + \veps/3} \leq \rho r$, and 
\[
\pr\left(Y^r(0) - \rho r \geq r^{1/2 + \veps/3}\right)
\leq \exp\left(-\frac{r^{1+2\veps/3}}{4\rho r}\right) 
= \exp\left(-\frac{r^{2\veps/3}}{4\rho}\right).
\]
Similarly, for sufficiently large $r$, 
\[
\pr\left(Y^r(0) - \rho r \leq -r^{1/2 + \veps/3} \right) 
\leq \exp\left(-\frac{r^{2\veps/3}}{4\rho}\right).
\]
Thus, by a simple union bound, for sufficiently large $r$,
\begin{equation}\label{eq:p1-part-a}
\pr\left(|Y^r(0) - \rho r| \geq r^{1/2 + \veps/3} \right) 
\leq 2\exp\left(-\frac{r^{2\veps/3}}{4\rho}\right) \leq e^{-\text{poly}(r)}.
\end{equation}
This completes part (a). 

(b) We then show that 
\begin{equation}\label{eq:conc-sup}
\pr\left(\sup_{t \in [0, r^{-1/2}]} \left(Y^r(t) - \rho r\right) \geq r^{1/2 + 2\veps/3}\right)
\leq e^{-\text{poly}(r)}.
\end{equation}
To establish \eqref{eq:conc-sup}, we use the following representation of the process $Y^r(\cdot)$. 
Note that the process $Y^r(\cdot)$ describes the steady-state evolution of a $M/M/\infty$ queueing system, 
so we can represent $Y^r(\cdot)$ as (see e.g., \cite{PTW2007}) 
\begin{equation}\label{eq:m/m/infty}
Y^r(t) = Y^r(0) + \Pi(\lambda r t) - \wt{\Pi}\left(\int_0^t \mu Y^r(s) ds\right),
\end{equation}
where $Y^r(0)$ is a Poisson random variable with mean $\rho r$, and 
$\Pi(\cdot)$ and $\wt{\Pi}(\cdot)$ are independent unit-rate Poisson processes
that are also independent from $Y^r(0)$. 

By \eqn{eq:m/m/infty}, we have that w.p.$1$, for all $t \in [0, r^{-1/2}]$, 
\[
Y^r(t) \leq Y^r(0) + \Pi(\lambda r t) \leq Y^r(0) + \Pi\left(\lambda r \cdot r^{-1/2}\right) 
= Y^r(0) + \Pi\left(\lambda r^{1/2}\right). 
\]
Using \eqn{eq:pois-upper} and the fact that $\Pi\left(\lambda r^{1/2}\right)$ is Poisson with mean $\lambda r^{1/2}$, we have that 
\begin{equation}\label{eq:pois-upper-bound}
\pr\left(\Pi\left(\lambda r^{1/2}\right) \geq 2\lambda r^{1/2} \right) 
\leq \exp\left(-\frac{\lambda r^{1/2}}{4}\right) \leq e^{-\text{poly}(r)}.
\end{equation}
For sufficiently large $r$, if $Y^r(0) + \Pi\left(\lambda r^{1/2}\right) \geq \rho_i r + r^{1/2 + 2\veps/3}$, then we must have
$Y^r(0) \geq \rho_i r + r^{1/2 + \veps/3}$ or $\Pi\left(\lambda r^{1/2}\right) \geq 2\lambda r^{1/2}$. Thus, 
\begin{eqnarray*}
\pr\left(Y^r(0) + \Pi\left(\lambda r^{1/2}\right) \geq \rho_i r + r^{1/2 + 2\veps/3}\right)
&\leq & \pr\left(Y^r(0) \geq \rho_i r + r^{1/2 + \veps/3} \mbox{ or } \Pi\left(\lambda r^{1/2}\right) \geq 2\lambda r^{1/2}\right) \\
&\leq & \pr\left(Y^r(0) \geq \rho_i r + r^{1/2 + \veps/3} \right) \\
& & + \pr\left(\Pi\left(\lambda r^{1/2}\right) \geq 2\lambda r^{1/2}\right) \\
&\leq & e^{-\text{poly}(r)}, 
\end{eqnarray*}
where the last inequality follows from \eqref{eq:conc-0} and \eqref{eq:pois-upper-bound}.

Since with probability $1$, for all $t \in [0, r^{-1/2}]$, 
\[
Y^r(t) \leq Y^r(0) + \Pi\left(\lambda r^{1/2}\right),
\]
it immediately follows that \eqn{eq:conc-sup} holds. This completes part (b).

(c) We now show that 
\begin{equation}\label{eq:conc-inf}
\pr\left(\inf_{t \in [0, r^{-1/2}]} (Y^r(t) - \rho r) \leq - r^{1/2 + 2\veps/3}\right)
\leq e^{-\text{poly}(r)}.
\end{equation}
Using the representation \eqref{eq:m/m/infty}, we have that 
with probability $1$, for all $t \in [0, r^{-1/2}]$, 
\[
Y^r(t) \geq Y^r(0) - \wt{\Pi}\left(\int_0^t \mu Y^r(s) ds\right) 
\geq Y^r(0) - \wt{\Pi}\left(\int_0^{r^{-1/2}} \mu Y^r(s) ds\right). 
\]
Consider events $F^r$ and $G^r$ defined to be 
\[
F^r = \left\{\sup_{t \in [0, r^{-1/2}]} Y^r(t) < 2 \rho r\right\}, \quad \text{and} \quad 
G^r = \left\{\wt{\Pi}\left(r^{1/2+\veps/3}\right) < 2 r^{1/2 + \veps/3}\right\}.
\]
For sufficiently large $r$, we have that under event $F^r$,  
\[
\int_0^{r^{-1/2}} \mu Y^r(s) ds < \mu r^{-1/2} \cdot (2\rho r) 
\leq r^{1/2 + \veps/3}.
\]
Thus, for sufficiently large $r$, we have that under the event $F^r \cap G^r$, 
\[
\wt{\Pi}\left(\int_0^{r^{-1/2}} \mu Y^r(s) ds\right) \leq \wt{\Pi}\left(r^{1/2 + \veps/3}\right) 
< 2 r^{1/2 + \veps/3}.
\]
By \eqn{eq:conc-sup}, we know that $\pr\left(F^r\right) \geq 1 - e^{-\text{poly}(r)}$, 
and using \eqn{eq:pois-upper}, we can easily show that 
$\pr\left(G^r\right) \geq 1 - e^{-\text{poly}(r)}$. 
Thus, we have $\pr\left(F^r\cap G^r\right) \geq 1 - e^{-\text{poly}(r)}$, 
and by considering the complement, we have
\begin{equation}\label{eq:pois-lower-bound}
\pr\left(\wt{\Pi}\left(\int_0^{r^{-1/2}}\mu Y^r(s) ds\right) \geq 2 r^{1/2 + \veps/3} \right) 
\leq e^{-\text{poly}(r)}.
\end{equation}
For sufficiently large $r$, if $Y^r(0) - \wt{\Pi}\left(\int_0^{r^{-1/2}}\mu Y^r(s) ds\right) \leq \rho_i r - r^{1/2 + 2\veps/3}$, 
then we have $Y^r(0) \leq \rho_i r - r^{1/2 + \veps/3}$ or $\wt{\Pi}\left(\int_0^{r^{-1/2}}\mu Y^r(s) ds\right) \geq 2 r^{1/2 + \veps/3}$. 
Thus, similar to the argument in part (b), we have that 
\begin{eqnarray*}
\pr\left(Y^r(0) - \wt{\Pi}\left(\int_0^{r^{-1/2}}\mu Y^r(s) ds\right) \leq \rho_i r - r^{1/2 + 2\veps/3}\right)
&\leq & \pr\left(Y^r(0) \leq \rho_i r - r^{1/2 + \veps/3} \right) \\
& & + \pr\left(\wt{\Pi}\left(\int_0^{r^{-1/2}}\mu Y^r(s) ds\right) \geq 2 r^{1/2 + \veps/3}\right) \\
&\leq & e^{-\text{poly}(r)}, 
\end{eqnarray*}
where the last inequality follows from \eqref{eq:conc-0} and \eqref{eq:pois-lower-bound}.
This establishes \eqn{eq:conc-inf}, and completes part (c). 

Combining \eqn{eq:conc-0}, \eqn{eq:conc-sup} and \eqn{eq:conc-inf}, 
we have 
\[
\pr\left(\sup_{t \in [0, r^{-1/2}]} |Y^r(t) - \rho r| \geq r^{1/2 + 2\veps/3}\right)
\leq e^{-\text{poly}(r)}.
\]
Since for sufficiently large $r$, $\frac{r^{1/2 + \veps}}{I} \geq r^{1/2 + 2\veps/3}$, we also have
\[
\pr\left(\sup_{t \in [0, r^{-1/2}]} |Y^r(t) - \rho r| \geq \frac{r^{1/2 + \veps}}{I}\right) 
\leq \pr\left(\sup_{t \in [0, r^{-1/2}]} |Y^r(t) - \rho r| \geq r^{1/2 + 2\veps/3}\right)
\leq e^{-\text{poly}(r)}.
\]
This establishes Lemma \ref{lem:p1-t}. 
\end{proof}

\section{Proof of Lemma~\ref{lem-p2}} \label{proof-property-2}

We already gave an informal sketch of this proof immediately after Lemma~\ref{lem-p2} statement,
at the end of Subsection~\ref{subsec-wp-bounds}.

For each $\bk \in \bar\ck$, define constant 
$s(\bk) = 1 - \left(1 + \sum_{i \in \ci} k_i\right) (1-p)$. 
To establish Lemma \ref{lem-p2}, we consider instead the condition 
\begin{equation}\label{eq:p2'}
X_{\bk}^r(t) \ge c r^{s(\bk)}
\end{equation}
for each $\bk \in \bar{\ck}$, at any time $t \geq 0$, and prove the following stronger result.
\begin{lem}\label{lem:property2'}
Let $\alpha > 0$. 
Consider our sequence of systems in the stationary regime, under the GRAND($Z^p$) algorithm. 
Then for each $\bk \in \bar{\ck}$, there exists a constant $c>0$, such that as $r \to \infty$, 
\begin{equation}\label{eq:prob-p2'}
\pr(\mbox{Condition \eqn{eq:p2'} holds for all } t\in [0, r^{\alpha}]) \geq 1 - e^{-\text{poly}(r)}.
\end{equation}
\end{lem}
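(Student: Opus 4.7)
I would prove Lemma~\ref{lem:property2'} by induction on $\bk$ with respect to the componentwise partial order on $\bar\ck$, following the sketch given after Lemma~\ref{lem-p2}. The \emph{base case} $\bk = \bZero$ is essentially immediate from Lemma~\ref{lem-p1}: on the event $\Omega_Z^r = \{|Z^r(t)/r - 1| \leq r^{-1/2+\veps}\ \forall\, t \in [0, r^\alpha]\}$, which has probability $\geq 1 - e^{-\mathrm{poly}(r)}$, we have $Z^r(t) \geq r/2$ for all large $r$, so $X^r_{\bZero}(t) = \lceil Z^r(t)^p\rceil \geq (1/2)^p r^{s(\bZero)}$ on this event. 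For the \emph{inductive step}, fix $\bk \in \bar\ck$ with $\bk \neq \bZero$, choose $i \in \ci$ with $k_i \geq 1$, set $\bk' = \bk - \be_i \in \bar\ck$, and assume inductively that for some $c' > 0$ the event
\[
\Omega_0^r = \Omega_Z^r \cap \bigl\{X^r_{\bk''}(t) \geq c' r^{s(\bk'')}\ \forall\, \bk'' \leq \bk,\ \bk'' \neq \bk,\ \forall\, t \in [0, r^\alpha]\bigr\}
\]
satisfies $\pr(\Omega_0^r) \geq 1 - e^{-\mathrm{poly}(r)}$; this holds by the inductive hypothesis applied to the finitely many $\bk''$.

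On $\Omega_0^r$, at any time $t$ with $X^r_\bk(t) \leq c r^{s(\bk)}$, the upward transition rate along edge $(\bk, i)$ satisfies
\[
\lambda_i r \cdot \frac{X^r_{\bk'}(t)}{X^r_{(i)}(t)} \geq \lambda_i r \cdot \frac{c' r^{s(\bk')}}{2r} = c_1 r^{s(\bk')} = c_1 r^{s(\bk)+1-p},
\]
using $X^r_{(i)} \leq X^r_{\bZero} + Z^r \leq 2r$ on $\Omega_Z^r$, while the total downward rate is at most $c_4 c r^{s(\bk)+1-p}$ by the sketch. Choosing $c$ so that $c_4 c \leq c_1/4$ yields a uniform upward bias by a factor of at least four in the low regime. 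I would then stochastically dominate $X^r_\bk(\cdot)$, on $\Omega_0^r$ and up to the stopping time $\tau^r = \inf\{t : \Omega_0^r \text{ is violated}\} \wedge r^\alpha$, from below by a biased birth-death chain $W(\cdot)$ on $\{0, 1, \ldots, \lceil c r^{s(\bk)} \rceil\}$ reflected at the top, with constant birth rate $c_1 r^{s(\bk')}$ and death rate $c_4 c r^{s(\bk')}$. Standard Chernoff hitting-time bounds for a biased random walk show that, starting from the top, the probability $W$ reaches level $\lceil (c/2) r^{s(\bk)}\rceil$ before returning to the top is $\leq \exp(-c_5 r^{s(\bk)})$, and the mean duration of each excursion below the top is $O(r^{-s(\bk')})$. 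A union bound over excursions (whose total number in $[0, r^\alpha]$ is, with high probability, $O(r^{1+\alpha})$) combined with the fact that $s(\bk) > 7/8 > 0$ under condition \eqn{eq-cond-p} yields
\[
\pr\bigl(\exists\, t \in [0, r^\alpha]:\, X^r_\bk(t) < (c/2) r^{s(\bk)} \bigm| \Omega_0^r\bigr) \leq e^{-\mathrm{poly}(r)}.
\]
Together with $\pr(\Omega_0^r) \geq 1 - e^{-\mathrm{poly}(r)}$, this closes the induction, with $c/2$ serving as the new constant.

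\textbf{The main obstacles} are twofold. First, the instantaneous rates of $X^r_\bk(\cdot)$ depend on the entire state $\bX^r(\cdot)$, not just on $X^r_\bk(\cdot)$, so the coupling with $W$ is valid only up to the stopping time $\tau^r$; the key is to show that $\{\tau^r = r^\alpha\}$ has probability $1 - e^{-\mathrm{poly}(r)}$, which is precisely $\pr(\Omega_0^r)$ and follows from Lemma~\ref{lem-p1} and the inductive hypothesis. Second, and more delicate, is the initial condition at $t = 0$: since we are in the stationary regime, $X^r_\bk(0)$ has a nontrivial distribution and need not a priori lie above $(c/2) r^{s(\bk)}$. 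This can be handled by a Foster-Lyapunov argument applied to the potential $\phi(x) = \max(c r^{s(\bk)} - x, 0)$: on $\Omega_0^r$, and when $\phi > 0$, the expected drift of $\phi(X^r_\bk(\cdot))$ is strictly negative and of order $r^{s(\bk')}$, so stationarity forces $\E[\phi(X^r_\bk(0))] = e^{-\mathrm{poly}(r)}$, and Markov's inequality anchors the trajectory bound at $t = 0$. Once these two technicalities are addressed, the remainder of the argument is a routine combination of Chernoff bounds for biased birth-death chains with a Borel-Cantelli-type union bound over the $O(r^{1+\alpha})$ possible excursions, entirely parallel in spirit to the sub-interval decomposition used in the proof of Lemma~\ref{lem-p1}.
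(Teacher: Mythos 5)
Your high-level strategy matches the paper's: induction over configurations (the paper inducts on $\|\bk\|_1$, you on the componentwise order, which is equivalent here), a base case from Lemma~\ref{lem-p1}, and at the inductive step an exploitation of the same rate imbalance --- that the upward rate along $(\bk,\iota)$, $\gtrsim r^{s(\bk')}$, dominates the total downward rate $\lesssim c\,r^{s(\bk')}$ when $X^r_\bk \le c\,r^{s(\bk)}$ --- to keep $X^r_\bk$ aloft. But the implementation is genuinely different. The paper cuts $[0,r^\alpha+1]$ into subintervals of length $T=r^{s(\bk)-1-\veps'}$ and applies Poisson concentration to the increments $S_1,S_2,S_3$ on each subinterval (Claims~1 and 2), establishing that $X^r_\bk$ climbs to $4c\,r^{s(\bk)}$ within $1/T$ subintervals and thereafter never halves. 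You instead propose to dominate $X^r_\bk$ from below by a reflected birth-death chain $W$ with constant rates and to run a Chernoff/excursion analysis. That route is viable; you should, however, be careful to specify the monotone coupling in full (the relevant point is that $X^r_\bk$'s downward rate is itself $\lesssim r^{1-p}\cdot X^r_\bk$, so it is bounded by $W$'s constant death rate $c_4 c\, r^{s(\bk')}$ throughout $\{0,\dots,\lceil c\,r^{s(\bk)}\rceil\}$, which is what makes the domination work).

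The one genuine gap is your treatment of the initial condition. The Foster--Lyapunov argument with $\phi(x)=\max(c\,r^{s(\bk)}-x,0)$ does not yield $\E[\phi(X^r_\bk(0))]=e^{-\mathrm{poly}(r)}$ as you claim: balancing a negative drift of order $r^{s(\bk')}$ when $\phi>0$ against an upward push of order $r$ at the boundary only gives $\pr(\phi>0)\lesssim r^{1-s(\bk')}$, i.e.\ a polynomial, not exponential, bound; obtaining exponential tails would require an exponential Lyapunov function and a geometric-drift inequality, which you do not set up. Moreover, the drift computation is performed ``on $\Omega_0^r$,'' which is a whole-trajectory event rather than a state, so it does not slot cleanly into a generator-type drift argument without first constructing a frozen/modified process. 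The paper avoids all of this with a much simpler device: it proves, for \emph{arbitrary} initial state, the bound on $[1,r^\alpha+1]$ (the first $1/T$ subintervals are used for the climb-up), and then invokes stationarity to shift to $[0,r^\alpha]$. Your excursion argument admits exactly the same fix --- prove that with probability $1-e^{-\mathrm{poly}(r)}$, $W$ reaches the top within some short burn-in and stays above $(c/2)r^{s(\bk)}$ on $[\delta, r^\alpha+\delta]$, then apply stationarity --- and you should replace the Foster--Lyapunov step by this time-shift.
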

To prove Lemma \ref{lem:property2'}, we use the following construction 
of the underlying probability space. 
For each $(\bk, i) \in \cm$, let $\Pi_{(\bk, i)}(\cdot)$ and $\wt{\Pi}_{(\bk, i)}(\cdot)$
be unit-rate Poisson processes. Furthermore, 
all $\Pi_{(\bk, i)}(\cdot)$ and $\wt{\Pi}_{(\bk, i)}(\cdot)$ are independent. 
We will use $\Pi_{(\bk, i)}(\cdot)$ 
as the driving processes for customer arrivals, 
and $\wt{\Pi}_{(\bk, i)}(\cdot)$ to drive departures. 
Furthermore, processes $\Pi_{(\bk, i)}(\cdot)$ and $\wt{\Pi}_{(\bk, i)}(\cdot)$ 
are all independent from the initial random system state $\bX^r(0)$.

More specifically, let $D^r_{(\bk, i)}(t)$ denote the total number
of departures along the edge $(\bk, i)$ in $[0, t]$. 
Then, 
\begin{equation}\label{eq:departure-ki}
D^r_{(\bk, i)}(t) = \wt{\Pi}_{(\bk, i)}\left(\int_0^t X^r_{\bk}(u) k_i \mu_i du\right).
\end{equation}
Similarly, let $A^r_{(\bk, i)}(t)$ denote the total 
number of arrivals along the edge $(\bk, i)$ in $[0, t]$. 
Under the GRAND algorithm, 
a type-$i$ customer that arrives at time $s$ 
is placed in a server of configuration $\bk-\be_i$ 
with probability $X^r_{\bk-\be_i}(s)/X^r_{(i)}(s)$. 
Then, we can write
\begin{equation}\label{eq:arrival-ki}
A^r_{(\bk, i)}(t) = \Pi_{(\bk, i)}\left(\int_0^t \lambda_i r \cdot \frac{X^r_{\bk-\be_i}(u)}{X^r_{(i)}(u)} du\right).
\end{equation}
Thus, for each $\bk \in \ck$, 
we can write 
\begin{eqnarray}
X^r_{\bk}(t) - X^r_{\bk}(0) &=& \left[\sum_{i: \bk-\be_{i} \in \bar{\ck}} A^r_{(\bk, i)}(t) + \sum_{i: \bk+\be_{i} \in \ck} D^r_{(\bk+\be_i, i)} (t)\right] \nonumber \\
& &- \left[\sum_{i: \bk+\be_{i} \in \ck} A^r_{(\bk+\be_{i}, i)}(t) + \sum_{i: \bk-\be_{i} \in \bar{\ck}} D^r_{(\bk, i)} (t)\right] \label{eq:dynamics1} \\
& \geq & \left[\sum_{i: \bk-\be_{i} \in \bar{\ck}} A^r_{(\bk, i)}(t)\right] 
- \left[\sum_{i: \bk+\be_{i} \in \ck} A^r_{(\bk+\be_{i}, i)}(t) + \sum_{i: \bk-\be_{i} \in \bar{\ck}} D^r_{(\bk, i)} (t)\right] \nonumber \\
& = & S_1(t) - S_2(t) - S_3(t), \label{eq:dynamics2}
\end{eqnarray}
where 
\begin{align}
S_1(t) & = \sum_{i: \bk-\be_i \in \bar{\ck}} A^r_{(\bk, i)}(t); \label{eq:S1}\\ 
S_2(t) & = \sum_{i: \bk+\be_i \in \ck} A^r_{(\bk+\be_i, i)}(t); \label{eq:S2} \\
S_3(t) & = \sum_{i: \bk-\be_i \in \bar{\ck}} D^r_{(\bk, i)} (t). \label{eq:S3}
\end{align}
The equality \eqn{eq:dynamics1} follows by accounting for all arrivals and departures that contribute 
to $X^r_{\bk}(t) - X^r_{\bk}(0)$, the change in $X^r_{\bk}$. 
For example, the term $\sum_{i: \bk-\be_i \in \bar{\ck}} A^r_{(\bk, i)}(t)$ 
accounts for arrivals along the edges $(\bk, i) \in \cm$, 
which increases the number of servers of configuration $\bk$.

\begin{proof}[Proof of Lemma \ref{lem:property2'}]
We prove Lemma \ref{lem:property2'} by induction on $\|\bk\|_1 = \sum_i k_i$. 

{\em Base case: $\|\bk\|_1 = 0$.} If $\|\bk\|_1 = 0$, 
then $\bk = \bZero$, and $s(\bZero) = 1 - (1-p) = p$. 
Then, we show that 
\[
\pr\left(X_{\bZero}^r(t) \ge \frac{1}{2} r^{p}, \ \forall t \in [0, r^{\alpha}]\right) \geq 1 - e^{-\text{poly}(r)}.
\]
To this end, note that $X_{\bZero}^r(t) = \lceil(Z^r(t))^p\rceil$ for all $t$. 
By Lemma \ref{lem-p1}, we know that for any $\veps > 0$, as $r \to \infty$, 
\[
\pr\left(|Y_i^r(t) - \rho_i r| \le \frac{r^{1/2+\veps}}{I}, \ \forall t \in [0, r^{\alpha}], \forall i \in \ci \right) 
\geq 1 - e^{-\text{poly}(r)}. 
\]
In particular, by setting $\veps = 1/4$, we have
\[
\pr\left(|Y_i^r(t) - \rho_i r| \le \frac{r^{3/4}}{I}, \ \forall t \in [0, r^{\alpha}], \forall i \in \ci \right) 
\geq 1 - e^{-\text{poly}(r)}. 
\]
For sufficiently large $r$, $\frac{r}{2^{1/p}} \leq r - r^{3/4}$. 
Since $Z^r(t) = \sum_{i \in \ci} Y_i^r(t)$ for all $t$,  we have
\begin{eqnarray*}
\pr\left(Z^r(t) \ge \frac{r}{2^{1/p}}, \ \forall t \in [0, r^{\alpha}]\right) 
& \geq & 
\pr\left(|Z^r(t) - r| \le r^{3/4}, \ \forall t \in [0, r^{\alpha}]\right) \\
& \geq & 
\pr\left(|Y_i^r(t) - \rho_i r| \le \frac{r^{3/4}}{I}, \ \forall t \in [0, r^{\alpha}], \forall i \in \ci \right) \\
&\geq & 1 - e^{-\text{poly}(r)}.
\end{eqnarray*}
This implies that 
\[
\pr\left(X_{\bZero}^r (t) \ge  \frac{1}{2}r^{p}, \ \forall t \in [0, r^{\alpha}]\right) \geq
\pr\left(Z^r(t) \ge \frac{r}{2^{1/p}}, \ \forall t \in [0, r^{\alpha}]\right)
\geq 1- e^{-\text{poly}(r)},
\]
establishing the base case. 

{\em Induction step.} Let $\ell > 0$. Suppose that for all $\bk'$ with $\|\bk'\|_1 \le \ell-1$, 
there exists $c' > 0$ such that 
\[
\pr\left(X_{\bk'}^r(t) \ge c' r^{s(\bk')},\ \forall t \in [0, r^{\alpha}]\right) \geq 1 - e^{-\text{poly}(r)}. 
\]
Let $\bk \in \ck$ be a configuration with $\|\bk\|_1 = \ell > 0$. 
Then, there exists $\iota \in \ci$ such that $k_{\iota} \ge 1$. Let $\tilde{\bk} = \bk - \be_{\iota}$
so that $\|\tilde{\bk}\|_1 = \ell - 1$. 
For notational convenience, write $s_1 = s(\tilde{\bk})$ 
and $s_2 = s_1 - (1 - p)$ so that $s_2 = s(\bk)$. 

By the induction hypothesis, we can assume that 
\begin{equation}\label{eq:ind-hypo}
\pr\left(X_{\tilde{\bk}}^r(t) \ge \tilde{c} r^{s_1},\ \forall t \in [0, r^{\alpha}]\right) \geq 1 - e^{-\text{poly}(r)} 
\end{equation}
for some $\tilde{c} > 0$. Furthermore, 
define $E^r$ to be the event 
\begin{equation}\label{eq:event-ind-hypo}
E^r = \left\{X_{\tilde{\bk}}^r(t) \ge \tilde{c} r^{s_1},\ \forall t \in [0, r^{\alpha}]\right\}. 
\end{equation}
Then by \eqref{eq:ind-hypo} and \eqref{eq:event-ind-hypo}, 
$\pr\left(E^r\right) \geq 1 - e^{-\text{poly}(r)}$.

Consider the interval $[0, r^{\alpha}+1]$, and divide it into $\frac{r^{\alpha}+1}{T}$ sub-intervals of length $T = r^{s_2-1 - \veps'}$ (namely, sub-intervals $[0, T], [T, 2T], [2T, 3T], \cdots$),  
where $0 < \veps' < s_2 - \frac{p}{2}$. 
We claim the following. 

{\em Claim 1.} There exist positive constants $c$ and $\bar{c}$
such that with probability $1 - e^{-\text{poly}(r)}$, for each $j \in \{1, 2, \cdots, \frac{r^{\alpha}+1}{T}\}$:
\begin{itemize}
\item[(i)] if $X^r_{\bk}((j-1)T) \leq 4c r^{s_2}$, 
then 
\[
X^r_{\bk}(jT) - X^r_{\bk}((j-1)T) \geq \bar{c} r^{2s_2 - p - \veps'}; 
\]
\item[(ii)] if $X^r_{\bk}((j-1)T) \geq 2c r^{s_2}$, 
then 
\[
\inf_{t \in [(j-1)T, jT]} X^r_{\bk}(t) \geq \frac{1}{2}X^r_{\bk}((j-1)T). 
\]
\end{itemize}
Assuming the validity of Claim 1, we now complete the rest of the induction step, and defer the proof of the claim. 

Suppose that Claim 1 is true. For each $r$, consider a sample path 
for 
which both statements (i) and (ii) of Claim 1 
hold for all $j \in \{1, 2, \cdots, \frac{r^{\alpha}+1}{T}\}$, 
and let $j_0$ be minimal such that 
\begin{equation}\label{eq:j_0}
X^r_{\bk}(j_0 T) \geq 4c r^{s_2}. 
\end{equation}
Then, for sufficiently large $r$, $j_0 \leq \frac{1}{T}$. If $X^r_{\bk}(0) \geq 4c r^{s_2}$, then $j_0 = 0 \leq \frac{1}{T}$. 
If $X^r_{\bk}(0) < 4c r^{s_2}$, then we also have $j_0\leq \frac{1}{T}$, since 
by comparing the threshold $4c r^{s_2}$ and increment $\bar{c} r^{2s_2 - p - \veps'}$ 
from $X^r_{\bk}(jT)$ to $X^r_{\bk}((j-1)T)$ in statement (i), 
we have 
\[
\frac{4c r^{s_2}}{\bar{c} r^{2s_2 - p - \veps'}} = \frac{4c}{\bar{c}}\cdot \frac{1}{r^{s_2 - p -\veps'}} 
= \frac{4c}{\bar{c}}\cdot \frac{r^{p-1}}{T} \leq \frac{1}{T}.
\]
Next, claim for any $j \in \left\{j_0, j_0 + 1, \cdots, \frac{r^{\alpha} +1}{T}\right\}$, 
we have 
\begin{equation}\label{eq:prop2'-end-pt}
X^r_{\bk}(jT) \geq 2c r^{s_2},
\end{equation} 
and we establish this claim by induction (note that this induction is distinct from the induction on $\|\bk\|$ 
that we use to prove Lemma \ref{lem:property2'}). 
First, \eqn{eq:prop2'-end-pt} is true for $j = j_0$, by \eqn{eq:j_0}. 
Next, for $j > j_0$, suppose by induction hypothesis that 
$X^r_{\bk}((j-1)T) \geq 2c r^{s_2}$. 
Then, we consider two cases: (a) if $X^r_{\bk}((j-1)T) \geq 4c r^{s_2}$, 
then by statement (ii), \eqn{eq:prop2'-end-pt} holds for $j$; 
(b) if $X^r_{\bk}((j-1)T) \in \left[2cr^{s_2}, 4cr^{s_2}\right)$, 
then by statement (i), \eqn{eq:prop2'-end-pt} holds for $j$ as well. 
This completes the induction step and the proof of \eqn{eq:prop2'-end-pt} for 
all $j \in \left\{j_0, j_0 + 1, \cdots, \frac{r^{\alpha} +1}{T}\right\}$. 
Now by statement (ii), for each $j \in \left\{j_0 + 1, \cdots, \frac{r^{\alpha} +1}{T}\right\}$, 
\[
\inf_{t \in [(j-1)T, jT]} X^r_{\bk}(t) \geq \frac{1}{2}X^r_{\bk}((j-1)T) \geq \frac{1}{2} \cdot 2cr^s = cr^s,
\]
where the second inequality follows by \eqref{eq:prop2'-end-pt}. 
Thus, 
\[
\inf_{t \in [1, r^{\alpha}+1]} X^r_{\bk}(t) \geq \inf_{t \in [j_0 T, \frac{r^{\alpha}+1}{T}\cdot T]} X^r_{\bk}(t)
= \inf_{j \in \left\{j_0 + 1, \cdots, \frac{r^{\alpha} +1}{T}\right\}} \left(\inf_{t \in [(j-1)T, jT]}X^r_{\bk}(t) \right) 
\geq cr^s; 
\]
i.e., for all $t \in [1, r^{\alpha} + 1]$, 
\begin{equation}\label{eq:prop2'}
X^r_{\bk}(t) \geq cr^{s_2}. 
\end{equation}

In summary, we have established that with probability $1 - e^{-\text{poly}(r)}$, for all $t \in [1, r^{\alpha} + 1]$, 
\[
X^r_{\bk}(t) \geq c r^{s_2} = c r^{s(\bk)}. 
\]
By the stationarity of the processes $\bX^r(\cdot)$, we can conclude that 
\[
\pr\left(X^r_{\bk}(t) \geq c r^{s_2}, \ \forall t \in [0, r^{\alpha}]\right) \geq 1 - e^{-\text{poly}(r)}.
\]
This completes the induction step, assuming that Claim 1 holds. It is now left to prove Claim 1.

{\em Proof of Claim 1.} Let us first focus on the sub-interval $[0, T]$. 
Again, since the systems indexed by $r$ are in the stationary regime, 
probability bounds that we derive over the sub-interval $[0, T]$ 
extend naturally to other sub-intervals $[jT, (j+1)T]$, $j = 1, 2, \cdots$. 
Claim 1 is an easy consequence of the following claim. 

{\em Claim 2.} There exist positive constants $c$ and $\bar{c}$
such that with probability $1 - e^{-\text{poly}(r)}$, the following holds. 
\begin{itemize}
\item[(1)] If $X^r_{\bk}(0) \leq 4cr^{s_2}$, 
then 
\begin{equation}\label{eq:increase}
X^r_{\bk}(T) - X^r_{\bk}(0) \geq \bar{c}r^{2s_2 - p - \veps'}. 
\end{equation}
\item[(2)] If $X^r_{\bk}(0) \geq 2cr^{s_2}$, 
then 
\begin{equation}\label{eq:max-decrease}
\inf_{t \in [0, T]} X^r_{\bk}(t) \geq \frac{1}{2}X^r_{\bk}(0). 
\end{equation}
\end{itemize}
To see how Claim 2 implies Claim 1, recall that our systems are in the stationary regime. 
Thus, for any other sub-interval $[jT, (j+1)T]$, with the same probability $1 - e^{-\text{poly}(r)}$, 
statements (1) and (2) of Claim 2 hold, when we replace $0$ by $jT$ and $T$ by $(j+1)T$. 
Since there are a polynomial (in $r$) number of such sub-intervals, Claim 1 then follows immediately. 
We now prove Claim 2.

{\em Proof of Claim 2.} To bound $X^r_{\bk}(\cdot)$, we consider terms $S_1(T), S_2(T)$ and $S_3(T)$ defined in \eqn{eq:S1} -- \eqn{eq:S3} separately.

(a) First, consider the term $S_1(T)$ defined in \eqn{eq:S1}.
Focus on $A^r_{(\bk, \iota)}(T)$, the cumulative arrivals along the edge $(\bk, \iota)$, 
where we recall that $k_{\iota} \ge 1$, and $\tilde{\bk} = \bk - \be_{\iota}$. 
We have 
\[
A^r_{(\bk, \iota)}(T) = \Pi_{(\bk, \iota)}\left(\int_0^T \lambda_{\iota} r \frac{X^r_{\tilde{\bk}}(u)}{X^r_{(\iota)}(u)} du\right) 
\leq S_1(T).
\]
Let $F^r_1$ be the event defined by 
\[
F^r_1 = \left\{\frac{1}{2}r \leq Z^r(u) \leq \frac{3}{2}r, \text{ for all } u \in [0, T]\right\}.
\] 
Then, by inspecting the proof of the base case, 
it is easy to see that $\pr(F^r_1) \geq 1 - e^{-\text{poly}(r)}$. 
Furthermore, 
for sufficiently large $r$, under the event $F^r_1$, we have that for all $u \in [0, T]$, 
\[
X^r_{(\iota)}(u) \leq Z^r(u) + \left[Z^r(u)\right]^p \leq 2r.
\]
Thus, for sufficiently large $r$, 
under the event $E^r \cap F^r_1$, where we recall the definition of $E^r$ in \eqn{eq:event-ind-hypo}, 
we have that for all $u \in [0, T]$,
\[
\frac{X^r_{\tilde{\bk}}(u)}{X^r_{(\iota)}(u)} \geq \frac{\tilde{c}r^{s_1}}{2r} = \frac{\tilde{c}}{2}r^{s_1 - 1},
\]
from which it follows that
\[
\int_0^T \lambda_{\iota} r \frac{X^r_{\tilde{\bk}}(u)}{X^r_{(\iota)}(u)} du \geq 
\lambda_{\iota} r \cdot \frac{\tilde{c}}{2}r^{s_1 - 1}\cdot T 
= c_1 r^{s_1 + s_2 - 1 - \veps'}. 
\]
where $c_1 = \frac{1}{2}\lambda_{\iota} \tilde{c}$.
We also define event $G^r_1$ to be
\[
G^r_1 = \left\{\Pi_{(\bk, \iota)}\left(c_1 r^{s_1 + s_2 - 1 - \veps'}\right) \geq \frac{1}{2} c_1 r^{s_1 + s_2 - 1 - \veps'}\right\}.
\]
Then, $\pr(G^r_1) \geq 1 - e^{-\text{poly}(r)}$, 
and under the event $E^r \cap F^r_1 \cap G^r_1$, we have that
\begin{equation}
A^r_{(\bk,\iota)}(T) \geq \frac{1}{2} c_1 r^{s_1 + s_2 - 1 - \veps'}.
\end{equation}
Thus, 
\begin{equation}\label{eq:arrival-lb}
\pr\left(S_1 (T) \geq \frac{1}{2}c_1 r^{s_1 + s_2 - 1 - \veps'}\right) \geq \pr\left(A^r_{(\bk,\iota)}(T) \geq \frac{1}{2}c_1 r^{s_1 + s_2 - 1 - \veps'}\right) \geq 1 - e^{-\text{poly}(r)}.
\end{equation}
This completes our probability bound for $S_1(T)$ and part (a).

(b) Next, we consider the term $S_2(T)$ defined in \eqn{eq:S2}. 
We have 
\[
S_2(T) = \sum_{i: \bk+\be_i \in \ck} A^r_{(\bk+\be_i, i)}(T) = \sum_{i: \bk+\be_i \in \ck} 
\Pi_{(\bk + \be_i, i)}\left(\int_0^T \lambda_i r \cdot \frac{X^r_{\bk}(u)}{X^r_{(i)}(u)} du\right).
\]
Instead of deriving probability bounds for $S_2(T)$, in part (b) we will only derive 
a probability bound for 
\begin{equation}\label{eq:tilde-S-2}
\sum_{i: \bk+\be_i \in \ck} \int_0^T \lambda_i r \cdot \frac{X^r_{\bk}(u)}{X^r_{(i)}(u)} du.
\end{equation}
The reason is twofold. First, the term \eqn{eq:tilde-S-2} captures the order of magnitude of $S_2(T)$. 
Second, the bound that we will derive for the term \eqn{eq:tilde-S-2} 
depends on $X^r_{\bk}(0)$. Since we will consider two separate cases 
depending on the magnitude of $X^r_{\bk}(0)$, we will have separate probability bounds for $S_2(T)$, 
which we derive after considering the term $S_3(T)$.
Consider $\int_0^T \lambda_{i} r \frac{X^r_{\bk}(u)}{X^r_{(i)}(u)}du$, 
a generic summand of the term \eqn{eq:tilde-S-2}. 
By the base case and using the fact that $X^r_{(i)}(u) \geq X^r_{\bZero}(u)$ for all $u$, we have that
\begin{equation}\label{eq:lb1}
\pr\left(X^r_{(i)}(u) \geq \frac{1}{2} r^p\right) \geq 
\pr\left(X^r_{\bZero}(u) \geq \frac{1}{2} r^p\right) \geq 1 - e^{-\text{poly}(r)}.
\end{equation}
Furthermore, we claim that there exists a constant $c_2>0$ 
such that 
\begin{equation}\label{eq:lb2}
\pr\left(X^r_{\bk}(u) - X^r_{\bk}(0) \leq c_2rT, ~~ \forall u \in [0, T]\right) \geq 1 - e^{-\text{poly}(r)}.
\end{equation}
We establish \eqn{eq:lb2} as follows. 
Let $N^r$ be the total number of arrivals to and departures from the system up to time $T$. 
Then, it is clear that for all $u \in [0, T]$, 
$X^r_{\bk}(u) - X^r_{\bk}(0) \leq N^r$. We now obtain a probability bound on $N^r$. 
Since we are only interested in probability bounds, we 
can and will at different points of the proof use different underlying probability space constructions,
as long as they produce the same -- in law -- system process.
At this point, we will use the following, different probability space  construction.
We associate a driving unit-rate Poisson process $\Pi(\cdot)$ 
for all the arrivals to the system, and an independent unit-rate Poisson process $\wt{\Pi}(\cdot)$ 
to drive all the departures. The total arrival rate at all times is $r$, 
and the total departure rate at time $t$ is given by $\sum_i \mu_i Y^r_i(t)$. 
Thus, $N^r$ has the same distribution as $\Pi(rT) + \wt{\Pi}\left(\int_0^T \sum_i \mu_i Y^r_i(u) du\right)$. 
With probability $1 - e^{-\text{poly}(r)}$, $\Pi(rT) \leq 2rT$. 
By Lemma \ref{lem-p1}, it is easy to see that with probability $1 - e^{-\text{poly}(r)}$, for all $u \in [0, T]$, 
$\sum_i \mu_i Y^r_i(u) du \leq \sum_i \mu_i (2\rho_i r) = 2\sum_i \lambda_i r = c_2' r$ for $c_2' = 2\sum_i \lambda_i$.
This implies that with probability $1 - e^{-\text{poly}(r)}$, 
\[
\wt{\Pi}\left(\int_0^T \sum_i \mu_i Y^r_i(u) du\right) \leq \wt{\Pi}\left(\int_0^T c_2' r du\right) 
= \wt{\Pi}\left(c_2' r T\right) \leq 2c_2' r T.
\]
Thus, with probability $1 - e^{-\text{poly}(r)}$, for all $u \in [0, T]$, 
\[
X^r_{\bk}(u) - X^r_{\bk}(0) \leq N^r \stackrel{d}= \Pi(rT) + \wt{\Pi}\left(\int_0^T \sum_i \mu_i Y^r_i(u) du\right) \leq 2rT + 2c_2' r T = c_2 r T.
\]
where $c_2 = 2 + 2c_2'$. This establishes \eqn{eq:lb2}.

By \eqn{eq:lb1} and \eqn{eq:lb2}, we can bound \eqn{eq:tilde-S-2} 
as follows. With probability $1 - e^{-\text{poly}(r)}$, 
\begin{eqnarray}
\int_0^T \lambda_{i} r \frac{X^r_{\bk}(u)}{X^r_{(i)}(u)}du 
&\le & \int_0^T \lambda_{i} r \frac{X^r_{\bk}(0) + c_2 rT}{r^p/2}du \nonumber \\
&\le & c_3 T r^{1-p} X^r_{\bk}(0) + c_4 T^2 r^{2-p} \nonumber \\
&=& c_5 r^{s_2 -p - \veps'} \left(X^r_{\bk}(0) + r^{s_2 - \veps'}\right) \nonumber
\end{eqnarray}
for some positive constants $c_3$, $c_4$ and $c_5$. It then follows immediately that 
with probability $1 - e^{-\text{poly}(r)}$, 
\begin{equation}\label{eq:arrival-ub}
\sum_{i:\bk+\be_i \in \ck}\int_0^T \lambda_{i} r \frac{X^r_{\bk}(u)}{X^r_{(i')}(u)}du 
\leq c_6 r^{s_2 -p - \veps'} \left(X^r_{\bk}(0) + r^{s_2 - \veps'}\right),
\end{equation}
for some positive constant $c_6$. This completes part (b). 

(c) We now consider the term $S_3(T)$ defined in \eqn{eq:S3}, which is given by
\[
S_3(T) = \sum_{i: \bk-\be_i \in \bar{\ck}} D^r_{(\bk, i)} (T) = 
\sum_{i: \bk-\be_i \in \bar{\ck}} \wt{\Pi}_{(\bk, i)}\left(\int_0^T X^r_{\bk}(u) k_i \mu_i du\right).
\]
Similar to part (b), we only derive a probability bound for 
\begin{equation}\label{eq:tilde-S-3}
\sum_{i: \bk-\be_i \in \bar{\ck}} \int_0^T X^r_{\bk}(u) k_i \mu_i du.
\end{equation}
By \eqn{eq:lb2}, we have that with probability $1 - e^{-\text{poly}(r)}$, 
\begin{eqnarray}
\int_0^T k_i\mu_i X^r_{\bk}(u) du &\leq & \int_0^T k_i\mu_i \left(X^r_{\bk}(0) + c_2 rT\right) du \nonumber \\
&=& c_7 T X^r_{\bk}(0) + c_8 r T^2 \nonumber \\
& = & c_9 r^{s_2 - 1 - \veps'} \left(X^r_{\bk}(0) + r^{s_2 - \veps'}\right), \nonumber
\end{eqnarray}
and 
\begin{equation}\label{eq:departure-ub}
\sum_{i: \bk-\be_i \in \bar{\ck}}\int_0^T k_i\mu_i X^r_{\bk}(u) du \leq 
c_{10} r^{s_2 - 1 - \veps'} \left(X^r_{\bk}(0) + r^{s_2 - \veps'}\right), 
\end{equation}
for some positive constants $c_7, c_8, c_9$ and $c_{10}$. This completes part (c). 

Observe that by \eqn{eq:arrival-ub} and \eqn{eq:departure-ub}, 
we have that with probability $1 - e^{-\text{poly}(r)}$, 
\[
\sum_{i:\bk+\be_i \in \ck}\int_0^T \lambda_{i} r \frac{X^r_{\bk}(u)}{X^r_{(i)}(u)}du 
+ \sum_{i: \bk-\be_i \in \bar{\ck}}\int_0^T k_i\mu_i X^r_{\bk}(u) du
\leq c_{11} r^{s_2 -p - \veps'} \left(X^r_{\bk}(0) + r^{s_2 - \veps'}\right),
\]
for some positive constant $c_{11}$. 
For notational convenience, we use $\wt{S}$ to denote the LHS of the preceding inequality, 
i.e., 
\begin{equation}\label{eq:wt-S}
\wt{S} = \sum_{i:\bk+\be_i \in \ck}\int_0^T \lambda_{i} r \frac{X^r_{\bk}(u)}{X^r_{(i)}(u)}du 
+ \sum_{i: \bk-\be_i \in \bar{\ck}}\int_0^T k_i\mu_i X^r_{\bk}(u) du,
\end{equation}
and we have 
\begin{equation}\label{eq:arr-dep-ub}
\pr\left(\wt{S} \leq c_{11} r^{s_2 -p - \veps'} \left(X^r_{\bk}(0) + r^{s_2 - \veps'}\right) \right) \geq 1 - e^{-\text{poly}(r)}.
\end{equation}
Let us also recall $S_2(T)$ and $S_3(T)$ defined in \eqn{eq:S2} and \eqn{eq:S3}, 
and consider the distribution of $S_2(T) + S_3(T)$. 
Note that by our construction of the probability space, 
$S_2(T)$ and $S_3(T)$ are sums of terms that correspond to 
arrivals and departures driven by independent underlying Poisson processes. 
Since we are only interested in the distribution of $S_2(T) + S_3(T)$, 
at this point consider the following alternative construction of the probability space, 
where all arrivals and departures that appear in the summations of $S_2(T)$ 
and $S_3(T)$ are driven by a {\em common} unit-rate Poisson process $\Pi'(\cdot)$. 
Then $\wt{S}$ has the same distribution under the original and alternative constructions;
and $S_2(T) + S_3(T)$ under the original construction has the same distribution as 
 $\Pi'\left(\wt{S}\right)$
under the alternative construction.

We now complete the proof of Claim 2, making use of mainly \eqn{eq:arrival-lb} and \eqn{eq:arr-dep-ub}.

Let $c = \frac{c_1}{64c_{11}}$ and $\bar{c} = \frac{c_1}{4}$.
We first consider case 1 of Claim 2, and suppose that $X^r_{\bk}(0) \leq 4c r^{s_2}$. 
For sufficiently large $r$, $4c r^{s_2} \geq r^{s_2 - \veps'}$.
Thus, by \eqn{eq:arr-dep-ub}, 
with probability $1 - e^{-\text{poly}(r)}$, 
\[
\wt{S} \leq c_{11} r^{s_2 -p - \veps'} \left(X^r_{\bk}(0) + r^{s_2 - \veps'}\right)
\leq c_{11} r^{s_2 -p - \veps'} \cdot (8c r^{s_2})
\leq \frac{c_1}{8} r^{2s_2 - p - \veps'} = \frac{\bar{c}}{2}r^{2s_2 - p - \veps'}.
\]
Since $\veps' < s_2 - p/2$, $2s_2 - p - \veps' > 0$, and with probability $1 - e^{-\text{poly}(r)}$, we have 
\[
\Pi'(\wt{S}) \leq \Pi'\left(\frac{\bar{c}}{2} r^{2s_2 - p - \veps'}\right) \leq \bar{c} r^{2s_2 - p - \veps'}.
\]
Thus, 
the event
$$
\Pi'(\wt{S}) \leq \frac{c_1}{4} r^{2s_2 - p - \veps'},
$$
and then also the event
$$
S_2(T) + S_3(T)  \leq \frac{c_1}{4} r^{2s_2 - p - \veps'},
$$
hold with probability $1 - e^{-\text{poly}(r)}$.  By 
\eqn{eq:arrival-ub}, we have that with probability $1-e^{-\text{poly}(r)}$, 
\begin{eqnarray*}
X^r_{\bk}(T) - X^r_{\bk}(0) &\geq & S_1(T) - S_2(T) - S_3(T) \\
& \geq & \frac{c_1}{2} r^{s_1 + s_2 - 1 - \veps'} - \bar{c} r^{2s_2 - p - \veps'} \\
&=& \bar{c} r^{2s_2 - p - \veps'},
\end{eqnarray*}
where the last equality follows from the fact that $s_1 = s_2 + (1 - p)$. This completes the proof 
of case 1.

Next, consider case 2, and suppose that $X^r_{\bk}(0) \geq 2cr^{s_2}$. 
For sufficiently large $r$, $X^r_{\bk}(0) \geq 2cr^{s_2} \geq r^{s_2 - \veps'}$. 
Then, with probability $1-e^{-\text{poly}(r)}$, 
\[
\wt{S} \leq c_{11} r^{s_2 -p - \veps'} \left(X^r_{\bk}(0) + r^{s_2 - \veps'}\right) \leq 2c_{11} r^{s_2 -p - \veps'} X^r_{\bk}(0)
\leq \frac{1}{4}X^r_{\bk}(0),
\]
and with probability $1 - e^{-\text{poly}(r)}$, 
\[
\Pi'\left(\wt{S}\right) \leq \Pi'\left(\frac{1}{4}X^r_{\bk}(0)\right) \leq \frac{1}{2}X^r_{\bk}(0). 
\]
Thus, with probability $1 - e^{-\text{poly}(r)}$, for every $u \in [0, T]$,
\begin{eqnarray*}
X^r_{\bk}(u) - X^r_{\bk}(0) &\geq & - S_2(u) - S_3(u) \\
& \geq & -S_2(T) - S_3(T)\\
& \geq & -\frac{1}{2}X^r_{\bk}(0),
\end{eqnarray*}
where we note that the first two inequalities hold with probability $1$. 
This establishes \eqn{eq:max-decrease}, and completes the proof of case 2. 
This concludes the proof of Claim 2. 
\end{proof}

\end{document}